\definecolor{MyDarkBlue}{rgb}{0.15,0.25,0.45}
\newcommand{\calA}{{\mathcal A}}
\newcommand{\calJ}{{\mathcal J}}
\newcommand{\calQ}{{\mathcal Q}}
\newcommand{\K}{{\mathbb{K}}}
\newcommand{\R}{{\mathbb{R}}}
\newcommand{\Z}{{\mathbb{Z}}}
\newcommand{\Q}{{\mathbb{Q}}}
\newcommand{\scrR}{{\mathscr R}}
\newcommand{\sfA}{{\mathsf A}}
\newcommand{\sfS}{{\mathsf S}}
\newcommand{\sfT}{{\mathsf T}}
\newcommand{\sft}{{\mathsf t}}
\newcommand{\sfQ}{{\mathsf Q}}
\newcommand{\sfR}{{\mathsf R}}
\newcommand {\bfI}{\mathbf I}
\newcommand{\bfU}{\mathbf U}
\renewcommand {\c}{\mathfrak c}
\newcommand {\g}{\mathfrak{g}}
\newcommand {\h}{\mathfrak h}
\newcommand {\n}{\mathfrak n}
\renewcommand {\r}{\mathfrak r}
\renewcommand{\sl}{\mathfrak{sl}}
\newcommand{\heis}{\mathfrak{heis}}
\newcommand{\rank}{\mathsf{rk}}
\newcommand{\opspan}{\mathsf{span}}
\newcommand{\colim@}[2]{%
	\vtop{\m@th\ialign{##\cr
			\hfil$\operator@font colim$\hfil\cr
			\noalign{\nointerlineskip\kern1.5\ex@}\cr
			\noalign{\nointerlineskip\kern-\ex@}\cr}}%
}
\newcommand{\colim}{%
	\mathop{\mathpalette\colim@{\rightarrowfill@\scriptscriptstyle}}\nmlimits@
}
\newcommand{\ldegp}{\mathsf{D}^{\mathsf{loc}}}
\newcommand{\degp}{\mathsf{D}}
\newcommand {\ol}{\overline}
\newcommand {\wt}{\widetilde}
\newcommand {\wh}{\widehat}
\newcommand{\scsop}{\scriptscriptstyle\operatorname} 
\newcommand {\ie}{{\textit{i.e.}}, }
\newcommand {\eg}{{\textit{e.g.}}, }
\newcommand{\oa}{a}
\newcommand{\ob}{b}
\newcommand{\oc}{c}
\newcommand{\serre}[1]{\mathsf{Serre}(#1)}
\newcommand{\serreadm}[1]{{\mathsf{Serre}}(#1)^{\scriptstyle\mathsf{adm}}}
\newcommand{\drc}[1]{\delta_{#1}}
\newcommand{\sfad}{\mathsf{ad}}
\newcommand{\ten}{\otimes}
\newcommand{\bsfld}{{\mathbf k}} 
\newcommand{\gcm}{\sfA} 
\newcommand{\vaa}{\calA} 
\newcommand{\sgp}{\sfS} 
\newcommand{\dsgp}{\sgp^{(2)}} 
\newcommand{\sgpp}{\oplus} 
\newcommand{\sgpm}{\ominus} 
\newcommand{\rsgp}{\sfS^{\mathsf{re}}}
\newcommand{\isgp}{\sfS^{\mathsf{im}}}
\newcommand{\csgp}{\mathbf{S}} 
\newcommand{\cf}[1]{{\mathbb 1}_{#1}} 
\newcommand{\fun}[2]{\mathfrak{fun}_{#2}(#1)} 
\newcommand{\intsf}[2]{\mathsf{Int}_{#2}(#1)} 
\newcommand{\abf}[2]{\langle #1, #2\rangle} 
\newcommand{\rbf}[2]{\left( #1, #2 \right)} 
\newcommand{\abfcf}[2]{\langle \cf{#1}, \cf{#2}\rangle} 
\newcommand{\rbfcf}[2]{\left( \cf{#1}, \cf{#2}\right)} 
\newcommand{\intnext}{\to} 
\newcommand{\rsub}{\vdash} 
\newcommand{\lsub}{\dashv} 
\newcommand{\intcap}{\pitchfork} 
\newcommand{\rtl}{\sfQ} 
\newcommand{\rts}{\sfR} 
\newcommand{\hcor}[1]{h_{#1}} 
\newcommand{\cow}[1]{\lambda^{\vee}_{#1}} 
\newcommand{\rls}[1]{\scrR} 
\newcommand{\mrls}[1]{\scrR_{\scsop{min}}} 
\newcommand{\crls}[1]{\ol{\scrR}} 
\newcommand{\sgprt}[1]{\varphi_{#1}}
\newcommand{\sfB}{\mathsf{B}}
\newcommand{\ia}{\alpha}
\newcommand{\ib}{\beta}
\newcommand{\ic}{\gamma}
\newcommand{\xg}[2]{x^{#1}_{#2}} 
\newcommand{\xe}[1]{\xg{\epsilon}{#1}} 
\newcommand{\xz}[1]{\zeta_{#1}} 
\newcommand{\xp}[1]{\xg{+}{#1}} 
\newcommand{\xm}[1]{\xg{-}{#1}} 
\newcommand{\xpm}[1]{\xg{\pm}{#1}} 
\newcommand{\xmp}[1]{\xg{\mp}{#1}} 
\newcommand{\KZQR}{\K=\Z,\Q,\R}
\newcommand{\KZQ}{\K=\Z,\Q}
\newcommand{\cq}[1]{\mathbf{Q}_{#1}}
\newcommand{\cqserre}[1]{\mathcal{S}_{#1}}
\newcommand{\cqg}[1]{\g_{#1}}
\newcommand{\cqwtg}[1]{\wt{\g}_{#1}}
\newcommand{\torus}{\mathbf{T^2}} 
\newcommand{\goldman}{\g_{\torus}}
\newcommand{\gsgp}{\mathsf{H}}
\newcommand{\gcsgp}{\mathbf{H}}
\newcommand{\triend}{\parbox{2mm}{\hfill} \hfill\text{\hspace{0.2mm}}\hfill$\triangle$}
\newcommand{\ocend}{\parbox{2mm}{\hfill} \hfill\text{\hspace{0.2mm}}\hfill$\oslash$}
\newtheorem{theorem}{Theorem}
\newtheorem{proposition}[theorem]{Proposition}
\newtheorem{lemma}[theorem]{Lemma}
\newtheorem{corollary}[theorem]{Corollary}
\newtheorem*{corollary*}{Corollary}
\newtheorem*{theorem*}{Theorem}
\newtheorem*{proposition*}{Proposition}
\newtheorem*{conjecture*}{Conjecture}
\newtheorem*{lemma*}{Lemma}
\numberwithin{equation}{section}
\numberwithin{theorem}{section}
\theoremstyle{remark}
\newtheorem{ex}[theorem]{Example}
\newenvironment{example}{\begin{ex}}{\triend\end{ex}}
\theoremstyle{remark}
\newtheorem{rem}[theorem]{Remark}
\newenvironment{remark}{\begin{rem}}{\hfill\triend\end{rem}}
\theoremstyle{remark}
\newtheorem{cl}[theorem]{Claim}
\theoremstyle{definition}
\newtheorem*{rem*}{Remark}
\newenvironment{remark*}{\begin{rem*}}{\triend\end{rem*}}
\theoremstyle{definition}
\newtheorem{defin}[theorem]{Definition}
\newenvironment{definition}{\begin{defin}}{\ocend\end{defin}}
\title[Continuum Kac--Moody algebras]
{Continuum Kac--Moody algebras}
\author[A.~Appel]{Andrea Appel}
\address[Andrea Appel]{Università di Parma, Dipartimento di Scienze Matematiche, Fisiche e Informatiche, 
	Italy}
\curraddr{}
\email{\href{mailto:andrea.appel@unipr.it}{andrea.appel@unipr.it}}
\author[F.~Sala]{Francesco Sala}
\address[Francesco Sala]{Università di Pisa, Dipartimento di Matematica, 
	Italy}
\address{Kavli IPMU (WPI), UTIAS, The University of Tokyo,
	Japan}
\curraddr{}
\email{\href{mailto:francesco.sala@unipi.it}{francesco.sala@unipi.it}}
\author[O.~Schiffmann]{Olivier Schiffmann}
\address[Olivier Schiffmann]{Laboratoire de Math\'ematiques, Universit\'e de Paris-Sud Paris-Saclay, 
	France}
\email{\href{mailto:olivier.schiffmann@math.u-psud.fr}{olivier.schiffmann@math.u-psud.fr}}
\thanks{The first--named author is partially supported by the ERC Grant 637618.
	The second--named author is partially supported by World Premier International 
	Research Center Initiative (WPI), MEXT, Japan, by JSPS KAKENHI Grant number JP17H06598 and 
	by JSPS KAKENHI Grant number JP18K13402.}
\subjclass[2010]{Primary: 17B65; Secondary: 17B67}
\keywords{Continuum quivers, Lie algebras, Borcherds--Kac--Moody algebras}
\begin{document}
	
	\begin{abstract}
		We introduce a new class of infinite--dimensional Lie algebras, which we
		refer to as continuum Kac--Moody algebras. Their construction is closely related
		to that of usual Kac--Moody algebras, but they feature a continuum root system with
		no simple roots. Their Cartan datum encodes the topology of a one--dimensional real 
		space and can be thought of as a generalization of a quiver, where vertices are replaced 
		by connected intervals. For these Lie algebras, we prove an analogue of the 
		Gabber--Kac--Serre theorem, providing a complete set of defining relations featuring 
		only quadratic Serre relations. Moreover, we provide an alternative realization as continuum
		colimits of symmetric Borcherds--Kac--Moody algebras with at most isotropic simple roots.
		The approach we follow deeply relies on the more general notion of a semigroup Lie algebra and
		its structural properties.
	\end{abstract}
	
	\maketitle\thispagestyle{empty}

	\tableofcontents
	\addtocontents{toc}{\protect\setcounter{tocdepth}{1}}
	
	
	\bigskip\section{Introduction}

	In the present paper, we introduce a new class of infinite--dimensional Lie algebras, 
	which we refer to as \textit{continuum Kac--Moody algebras}, associated to a topological 
	generalization of the notion of a quiver, where vertices are replaced by intervals in a real 
	one--dimensional topological space. These Lie algebras do not fall into the realm of 
	Kac--Moody algebras (nor of their several generalizations due to Borcherds \cite{borcherds-88},
	Saveliev--Vershik \cite{saveliev-vershik-90-2},  and Bozec \cite{bozec-16}). Rather, they encode the algebraic 
	structure of certain colimits of symmetric Borcherds--Kac--Moody algebras, corresponding, roughly, to families 
	of quivers with a number of vertices tending to infinity. 
	
	The simplest non--trivial examples of continuum Kac--Moody algebras are the Lie algebras 
	of the line and the circle, constructed in \cite{sala-schiffmann-17} together with 
	their quantizations, the \textit{quantum groups of the line and the circle}. The latter has 
	various geometric realizations via the theory of Hall algebras. Originally, it arises from the 
	Hall algebra of coherent sheaves on the \textit{infinite root stack} of a pointed curve. 
	In addition, T. Kuwagaki provided in \emph{loc. cit.} a \textit{mirror symmetry} 
	type construction of the same quantum group from the (derived infinitesimally wrapped) Fukaya category 
	of the cotangent bundle of the circle.
	In analogy with these constructions, we expect that continuum Kac--Moody algebras
	admit various geometric realizations, as classical limits of Hall algebras associated to coherent sheaves on weighted projective lines, \textit{coherent} persistence modules \footnote{See \cite{sala-schiffmann-19} for the notion of \textit{coherent} persistence 
		modules when $X$ is the line or the circle, and the references therein about the general theory of 
		persistence modules.}, and Fukaya categories of cotangent bundles.
	
	In the remaining part of the introduction, we provide some motivations and the description of the 
	continuum Kac--Moody algebras. These algebras are introduced as examples of a more general class 
	of Lie algebras, the \textit{semigroup} Lie algebras, which are naturally associated to (partial) semigroups. 
	Finally, we prove that the Goldman Lie algebra of the torus can be thought of as a remarkable example of
	such Lie algebras.
	
	\subsection*{The continuum Kac--Moody algebras of the line and the circle}
	In \cite{sala-schiffmann-17}, the second and third--named authors introduced the so-called \textit{circle quantum group} $\bfU_\upsilon\big(\sl(\Q/\Z)\big)$ and its classical limit
	$\sl(\Q/\Z)$. Their defining relations are somewhat cumbersome. Roughly, they are generated
	by infinitely--many $\sl(2)$--triples indexed by pairs of points (or equivalently \emph{intervals}
	\footnote{We call \textit{interval} the image in $\Q/\Z$ of an open--closed interval $(a, b] \subset \Q$ with $b-a \leqslant 1$.}) in the rational circle $\Q/\Z$ and commutation relations 
	depending upon their mutual position in $\Q/\Z$ (cf.~Definition~\ref{def:sl(K)}).
	The origin of such relations resides in the colimit realization, induced by the underlying 
	Hall algebra structure. More precisely, $\sl(\Q/\Z)$ can be thought of as a direct limit
	of standard affine Lie algebras $\wh{\sl}(n)$, $n\geqslant 2$, with system of morphisms 
	given by iterated applications on simple root vectors of the elementary map $\sl(2)\to\sl(3)$ 
	identified by the highest root in $\sl(3)$. 
	This construction endows $\sl(\Q/\Z)$ with several rather exotic and interesting features. 
	For example, its root system has no simple roots, since every simple root vector in $\wh{\sl}(n)$ 
	can be identified with a commutator in $\wh{\sl}(n+1)$,  and it is presented 
	by quadratic, apparently non--homogeneous, Serre relations.
	Several straightforward generalization are at hand. For instance, one may replace $\Q/\Z$ 
	with $\Q$, thus obtaining a Lie algebra isomorphic to a colimit of $\sl(n)$, $n\geqslant2$.
	Then, one may replace $\Q$ and $\Q/\Z$ with the real line $\R$ and the circle 
	$S^1\coloneqq\R/\Z$, respectively, so to obtain the \emph{continuum} analogues
	$\sl(\R)$ and $\sl(S^1)$. In Section~\ref{s:km-vkm}, we provide a much shorter and 
	concise presentation of these Lie algebras (cf. Corollary~\ref{cor:slR-presentation}),
	which allows to think of them as the simplest examples of \textit{continuum Kac--Moody algebras}.
	
	\subsection*{Semigroup Lie algebras}
	The original defining relations of the Lie algebras $\sl(\R)$ and $\sl(S^1)$ show some similarities, but also some striking differences
	with the usual relations appearing in the theory of Kac--Moody algebras. In particular, although 
	the role of vertices in the Dynkin diagram is seemingly played by open--closed intervals, the commutation rules between 
	the elements $e_{J}$ and $f_{J}$ above do not quite match the usual Kac--Moody relations. From a purely combinatorial
	point of view, these new commutation rules depend upon two simple operations on the set of intervals: the 
	\textit{concatenation} of two adjacent intervals and the \textit{truncation} of an interval into a smaller one. 
	These operations amount to a partial semigroup structure on the set of intervals.\\
	
	Motivated by this observation, we develop in Section~\ref{s:sgp-lie} a general theory of Lie algebras $\g(\csgp)$ associated to a triple $\csgp=(\sgp,\kappa,\xi_{\pm})$ 
	where $\sgp$ is a partial semigroup and $\kappa, \xi_{\pm}\colon \sgp\times\sgp\to\bsfld$ are functions satisfying some natural conditions. 
	The construction of $\g(\csgp)$ is close in spirit to that of Kac--Moody algebras.
	More precisely, we define $\wt{\g}(\csgp)$ as the Lie algebra generated by elements 
	$\xpm{\ia}$ and $\xz{\ia}$, with $\ia\in\sgp$, subject to the conditions
	\begin{align}
		\xz{\ia\sgpp\ib} =\,  \xz{\ia}+\xz{\ib}
	\end{align}
	whenever $\ia\sgpp\ib$ is defined, and
	\begin{align}
		[\xz{\ia},\xz{\ib}]=& 0\ , \\
		[\xz{\ia},\xpm{\ib}]=& \pm\kappa(\alpha,\beta)\xpm{\ib}\ ,\\
		[\xp{\ia},\xm{\ib}]=&\, \drc{\alpha,\beta}\xz{\ia}+
		\xi_+(\alpha,\beta)\xp{\ia\sgpm\ib}-\xi_-(\beta,\alpha)\xm{\ib\sgpm\ia}\ .
	\end{align}
	Then, we set
	$\g(\csgp)\coloneqq \wt{\g}(\csgp)/\r$, where $\r$ is the sum of all two--sided graded 
	ideals in $\wt{\g}(\csgp)$ having trivial intersection with the Cartan subalgebra generated 
	by the $\xz{\ia}$'s. 
	
	It is natural to ask whether $\g(\csgp)$ is graded over $\csgp$. This led to the
	study in Section~\ref{s:sgp-serre-relations} and \ref{s:sgp-serre} of a semigroup analogue of the 
	usual Serre relations of the form
	\begin{align}
		[\xpm{\ia}, \xpm{\ib}]=\mu_{\pm}(\alpha,\beta)\cdot \xpm{\ia\sgpp\ib}\ ,
	\end{align}
	for some suitable $\mu_{\pm}\colon\sgp\times\sgp\to\bsfld$. We identify a list of key properties,
	encoded by the notion of a {\em good Cartan semigroup}, which guarantee the existence of
	Serre relations for a large class of pairs $(\ia,\ib)$ (Theorem~\ref{thm:sym-serre-rel}). 
	
	\subsection*{Continuum quivers and continuum Kac--Moody algebras}
	We apply the structural results on semigroup Lie algebras to study a new class 
	of infinite--dimensional Lie algebras, which we refer to as continuum Kac--Moody 
	algebras. They are naturally associated to a topological datum, which we refer to 
	as continuum quiver in that it can be thought of as a topological generalization of 
	a quiver. Roughly, a continuum quiver is the datum of a one--dimensional real 
	CW complex $X$, which is locally modeled by \textit{smooth trees} glued with copies of 
	$S^1$, with a bilinear form on the space of locally constant functions 
	(cf.\ Definition~\ref{def:topological-quiver}). More precisely, continuum quivers 
	shall be thought of as good Cartan semigroups with a topological origin 
	and similar properties to that of positive roots for a Kac-Moody algebras.
	
	The notion of interval lifts easily from $\R$ to $X$, and the set $\intsf{X}{}$ 
	of \textit{intervals} in $X$ is therefore naturally endowed with two partially defined 
	operations, \ie the sum of intervals $\sgpp$\ , given by concatenation, and their 
	difference $\sgpm$, given by truncation. The set $\intsf{X}{}$ is naturally endowed 
	with a $\sgpp$--bilinear form.
	
	On the space of locally constant, left--continuous functions on $\R$ with limited 
	support, we consider a bilinear form given by:
	\begin{align}
		\abf{f}{g}\coloneqq \sum_{x} f_-(x)(g_-(x)-g_+(x)) \ ,
	\end{align}
	where $h_\pm(x)\coloneqq \lim_{t\to 0,\, t>0}\, h(x\pm t)$. 
	Note that the form $\abf{\cdot}{\cdot}$ is essentially defined by its values on 
	the characteristic functions of connected intervals. Therefore, the form 
	$\abf{f}{g}$ and its symmetrization $\rbf{f}{g}\coloneqq\abf{f}{g}+\abf{g}{f}$
	extend uniquely from $\R$ to $X$ by $\sgpp$--bilinearity. Moreover, we regard them 
	as forms on $\intsf{X}{}$ by setting $\rbf{\alpha}{\beta}\coloneqq \rbfcf{\ia}{\ib}$ and $\abf{\ia}{\ib}\coloneqq\abfcf{\ia}{\ib}$. 
	Finally, the continuum quiver is the good Cartan semigroup $\cq{X}=(\intsf{X}{},\kappa_X, \xi_X)$
	with $\kappa_X,\xi_X\colon\intsf{X}{}\times\intsf{X}{}\to\bsfld$ given by 
	$\kappa_X(\ia,\ib)\coloneqq \rbf{\ia}{\ib}$ and
	$\xi_X(\ia,\ib)\coloneqq(-1)^{\abf{\ia}{\ib}}\rbf{\ia}{\ib}$.
	The datum of $\cq{X}$ should be interpreted as a topological generalization of the 
	Borcherds--Cartan datum associated to a locally finite quiver with loops.
	
	Given a continuum quiver $\cq{X}$, the continuum Kac--Moody algebra
	$\cqg{X}\coloneqq\g(\cq{X})$ is by definition the semigroup Lie algebra associated to
	$\cq{X}$. Note that its Cartan subalgebra essentially coincides with the algebra of
	locally constant functions on $X$. 
	
	\subsection*{Complete presentation and colimit realization}
	Our main theorem is a continuum analogue of 
	the Gabber--Kac theorem \cite{gabber-kac-81}. More precisely, we show that the
	maximal ideal in $\cqwtg{X}$ is generated by the semigroup Serre relations
	described by $\cq{X}$. This leads to the following explicit description.
	
	\begin{theorem*}[{Theorem~\ref{thm:main}}]
		The continuum Kac--Moody algebra $\cqg{X}$ is generated by the elements $\xpm{\ia}$ and $\xz{\ia}$ with $\ia\in \intsf{X}{}$,  
		subject to the following defining relations: 
		\begin{enumerate}\itemsep0.2cm
			\item for any $\ia, \ib\in\intsf{X}{}$ such that $\ia\sgpp\ib$ is defined, 
			\begin{align}
				\xz{\ia\sgpp\ib}=\xz{\ia}+\xz{\ib}\ ;
			\end{align}
			\item \textbf{Diagonal action:}
			for any $\ia, \ib\in\intsf{X}{}$, 
			\begin{align}
				[\xz{\ia},\xz{\ib}] =0\ ,\quad 
				[\xz{\ia}, \xpm{\beta}] =\pm \rbf{\ia}{\ib}\cdot\, \xpm{\beta}\ ;
			\end{align}
			\item \textbf{Double relations:} for any $\ia, \ib\in\intsf{X}{}$, 
			\begin{align}
				[\xp{\ia},\xm{\ib}]=\drc{\ia,\ib}\, \xz{\ia}+(-1)^{\abf{\ia}{\ib}}\rbf{\ia}{\ib}\cdot
				\left(\xp{\ia\sgpm\ib}-\xm{\ib\sgpm\ia}\right)\ ;
			\end{align}
			\item \textbf{Serre relations:} if $(\ia, \ib)\in \cqserre{X}$, then
			\begin{align}
				\begin{aligned}
					[\xp{\ia}, \xp{\ib}]= & (-1)^{\abf{\ib}{\ia}}\xp{\ia\sgpp\ib} \ , \\
					[\xm{\ia}, \xm{\ib}]= & (-1)^{\abf{\ia}{\ib}}\xm{\ia\sgpp\ib}\ .
				\end{aligned}
			\end{align}	
		\end{enumerate}
	\end{theorem*}
	Roughly, $\cqserre{X}$ consists of unordered pairs $(\ia, \ib)\in\intsf{X}{}\times\intsf{X}{}$ such that 
	either 
	\begin{itemize}\itemsep0.2cm
		\item $\ia\sgpp\ib$ does not exist and $\ia\cap\ib=\emptyset$ or
		\item $\ia$ is contractible and, for subintervals $\ia'\subseteq\ia$ and $\ib'\subseteq \ib$ with $\rbfcf{\ib}{\ib'}\neq 0$ 
		whenever $\ib'\neq\ib$, $\ia'\sgpp\ib'$ is either undefined or non--homeomorphic to $S^1$.
	\end{itemize}
	
	An immediate consequence is the realization of $\cqg{X}$ as a continuous colimit of symmetric 
	Bor\-cherds--Kac--Moody algebras with at most isotropic simple roots (Theorem~\ref{thm:colim}).
	This is based on the following observation. The semigroup Serre relations do imply the usual
	Serre relations appearing in the case of quivers with at most one loop in every vertex, suggesting
	that $\cqg{X}$ can be locally described in terms of standard Borcherds--Kac--Moody algebras.
	
	Let $\calJ$ be a finite set of intervals $\ia\in\intsf{X}{}$, which roughly corresponds to a local 
	description of $X$ as a CW complex. More precisely, this means that every interval is either contractible or homeomorphic to $S^1$, and given two intervals $\ia,\ib\in\calJ$, $\ia\neq\ib$, one of the following mutually exclusive cases occurs:
	\begin{itemize}\itemsep0.2cm
		\item[(a)] $\ia\sgpp\ib$ exists;
		\item[(b)] $\ia\sgpp\ib$ does not exist and $\ia\cap\ib = \emptyset$;
		\item[(c)] $\ia\simeq S^1$ and $\ib\subset\ia$\ .
	\end{itemize}
	For any such $\calJ$, we get a symmetric matrix
	$\sfA_{\calJ}\coloneqq\big(\rbf{\ia}{\ib}\big)_{\ia,\ib\in \calJ}$ 
	and therefore a quiver $\calQ_{\calJ}$. Note that the diagonal entries of 
	$\sfA_{\calJ}$ are either $2$ or $0$, while the off--diagonal entries 
	are $0,-1$, or $-2$. In the table below, we give few examples.
	
	\begin{align}
		\begin{array}{|c|c|}
			\hline
			\text{Configuration of intervals} & \text{Borcherds--Cartan diagram}\\
			\hline &\\
			\begin{tikzpicture}[scale=.35]
				\begin{scope}[on background layer]
					\draw [white] (0,0) rectangle (15,10);
				\end{scope}
				\draw [->, very thick, blue] (3,5) -- (6,5); 
				\draw [->, very thick, purple] (6,5) -- (9,5); 
				\draw [->, very thick, yellow] (9,5) -- (12,5);
				\node at (4.5, 6) {$\ia_1$};
				\node at (7.5, 6) {$\ia_2$};
				\node at (10.5, 6) {$\ia_3$};
			\end{tikzpicture}
			&
			\begin{tikzpicture}[scale=.35]
				\begin{scope}[on background layer]
					\draw [white] (0,0) rectangle (15,10);
				\end{scope}
				\node (V1) at (3.5,5)      [circle,draw=blue,fill=blue, inner sep=3pt]          {};
				\node (V2) at (7.5,5)      [circle,draw=purple,fill=purple, inner sep=3pt]    {};
				\node (V3) at (11.5,5) [circle,draw=yellow,fill=yellow, inner sep=3pt]   {};
				\draw [->, very thick] (V1) -- (V2);  
				\draw [<-, very thick] (V3) -- (V2);
				\node at (3.5, 6)    {$\alpha_1$};
				\node at (7.5, 6)    {$\alpha_2$};
				\node at (11.5, 6)  {$\alpha_3$};
			\end{tikzpicture}
			\\
			\hline
		\end{array}
	\end{align}
	\vspace{-0.5cm}
	\begin{align}
		\begin{array}{|c|c|}
			\hline &\\
			\begin{tikzpicture}[scale=.35]
				\begin{scope}[on background layer]
					\draw [white] (0,0) rectangle (15,10);
				\end{scope}
				\draw [->, very thick, purple] (7.5,2) -- (7.5,5); 
				\draw [->, very thick, blue] (7.5,5) arc (0:90:2.5); 
				\draw [->, very thick, yellow] (7.5,5) arc (180:90:2.5);
				\node at (6, 3.5) {$\ia_2$};
				\node at (6, 8.5) {$\ia_1$};
				\node at (9, 8.5) {$\ia_3$};
			\end{tikzpicture}
			&
			\begin{tikzpicture}[scale=.35]
				\begin{scope}[on background layer]
					\draw [white] (0,0) rectangle (15,10);
				\end{scope}
				\node (V1) at (3.5,5)      [circle,draw=blue,fill=blue, inner sep=3pt]          {};
				\node (V2) at (7.5,5)      [circle,draw=purple,fill=purple, inner sep=3pt]    {};
				\node (V3) at (11.5,5) [circle,draw=yellow,fill=yellow, inner sep=3pt]   {};
				\draw [->, very thick] (V2) -- (V1);  
				\draw [<-, very thick] (V3) -- (V2);
				\node at (3.5, 6)    {$\alpha_1$};
				\node at (7.5, 6)    {$\alpha_2$};
				\node at (11.5, 6)  {$\alpha_3$};
			\end{tikzpicture}
			\\
			\hline &\\
			\begin{tikzpicture}[scale=.35]
				\begin{scope}[on background layer]
					\draw [white] (0,0) rectangle (15,10);
				\end{scope}
				\draw [<-, very thick, blue] (5,10) arc (180:270:2.5);
				\draw [->, very thick, purple] (10,5) arc (0:360:2.5);
				\node at (5, 7.5)    {$\ia_1$};
				\node at (11, 5)    {$\ia_2$};
			\end{tikzpicture}
			&
			\begin{tikzpicture}[scale=.35]
				\begin{scope}[on background layer]
					\draw [white] (0,0) rectangle (15,10);
				\end{scope}
				\node (V1) at (5, 5)  [circle,draw=blue,fill=blue, inner sep=3pt]    {};
				\node (V3) at (10, 5)  [circle,draw=purple,fill=purple, inner sep=3pt]    {};
				\draw [->, very thick] (V3) -- (V1);
				\draw [->, very thick] (13,5) arc (0:360:1.5);  
				\node at (10, 5)  [circle,draw=purple,fill=purple, inner sep=3pt]    {};
				\node at (5, 6)    {$\alpha_1$};
				\node at (9.5, 6)    {$\alpha_2$};
			\end{tikzpicture}
			\\
			\hline
		\end{array}\\
	\end{align}
	Note, in particular, that any contractible elementary interval corresponds to a vertex of 
	$\calQ_{\calJ}$ without loops, while any interval homeomorphic to $S^1$, corresponds 
	to a vertex having exactly one loop. This correspondence between intervals and quivers
	describes the relation between $\cqg{X}$ and standard Borcherds--Kac--Moody algebras.
	Indeed, we prove that the Borcherds--Kac--Moody algebra $\g_{\calQ_{\calJ}}$ is isomorphic
	to the subalgebra $\g(\calJ)\subset\cqg{X}$ generated by the elements $\xpm{\ia}$ and 
	$\xz{\ia}$ with $\ia\in\calJ$.
	
	Moreover, we prove that such local description gives rise to a global description of 
	$\cqg{X}$ as a colimit of Borcherds--Kac--Moody algebras. Indeed, let $\mathsf{Sh}(X)$
	be the set of all quivers which arise from a collection of intervals in $X$. Then,
	we prove that there is a direct system of embeddings 
	$\varphi_{\calQ,\calQ'}\colon \g_{\calQ}\to\g_{\calQ'}$, 
	indexed by pairs of quivers in $\mathsf{Sh}(X)$, which leads to the following:
	
	\begin{theorem*}[cf.\ Theorem~\ref{thm:colim}]
		There is a canonical isomorphism $\displaystyle\cqg{X}\simeq\colim_{\calQ\in\mathsf{Sh}(X)}\, \g_{\calQ}$.
	\end{theorem*}
	
	\begin{remark*}
		In \cite{appel-sala-19}, the first and second--named authors prove that 
		$\cqg{X}$ is further endowed with a non--degenerate invariant inner product 
		and a standard structure of quasi--triangular (topological) Lie bialgebra graded 
		over the semigroup $\cq{X}$. The methods used in the proof extend to the case 
		of a Hopf algebra and allow to construct \textit{algebraically} an explicit 
		quantization $\bfU_\upsilon (\cqg{X})$ of $\cqg{X}$, which is then called \textit{the continuum quantum group of $X$}.
	\end{remark*}

	\subsection*{Goldman Lie algebra of the torus and semigroup Lie algebras}
	
	The main motivation behind the theory of semigroup Lie algebras was to provide 
	a solid ground and an abstract approach to the theory of continuum Kac--Moody 
	algebras. The resulting theory is on the other hand extremely rich and features
	remarkable examples, such as the \textit{Goldman Lie algebra of the torus}.
	
	Recall that in \cite{morton-samuelson-17} the Goldman Lie algebra of the torus $\goldman$ is proved to be isomorphic to the Lie algebra generated by the elements $P_{\mathbf v}$ with $\mathbf v\coloneqq (a, b)\in \Z^2\smallsetminus\{(0,0)\}$, modulo the relation
	\begin{align}
		[P_{\mathbf v}, P_{\mathbf w}]=\det [\mathbf v\, \mathbf w] P_{\mathbf v + \mathbf w}\ ,
	\end{align}
	where $[\mathbf v, \mathbf w]$ is the order two matrix with columns 
	$\mathbf v$ and $\mathbf w$. 
	
	This presentation suggests an obvious interpretation of $\goldman$
	as a semigroup Lie algebra. The integral half plane 
	$\gsgp\coloneqq \{ (a, b)\in \Z^2\ \vert\ a\geqslant 1 \ \text{ or } \ a=0, b\geqslant 1\}$
	is naturally endowed with a structure of (total) semigroup with respect to the usual sum 
	of vectors and \emph{truncated difference} $\sgpm$ given by
	\begin{align}\label{eq:difference-goldman}
		(a,b) \sgpm (a', b') \coloneqq \begin{cases}
			(a-a', b-b') & \text{if } (a-a', b-b') \in \gsgp\ , \\[3pt]
			0 & \text{otherwise}\ .
		\end{cases} 
	\end{align}
	Set $\kappa(\mathbf{v},\mathbf{w})=0$ and 
	$\xi_{+}(\mathbf v, \mathbf w)\coloneqq  \det [\mathbf w\, \mathbf v]\eqqcolon 
	\xi_{-}(\mathbf v, \mathbf w)$ for any $\mathbf x, \mathbf y\in \gsgp$.
	We set $\gcsgp=(\gsgp,0,\xi)$ and consider the semigroup Lie algebra $\g(\gcsgp)$.
	Note that $\Z^2\setminus\{(0,0)\}=\gsgp\sqcup(-\gsgp)$ and, since $\kappa=0$, the 
	Cartan subalgebra $\mathfrak{z}\subset\g(\gcsgp)$ is central and we set
	$\g_{\gcsgp}\coloneqq\g(\gcsgp)/\mathfrak{z}$. 
	Moreover, by Proposition~\ref{pr:sgp-serre-example}, the Serre relations hold in $\g(\gcsgp)$
	and one has $[\xpm{\mathbf v}, \xpm{\mathbf w}]=\det [\mathbf v\, \mathbf w] \xpm{\mathbf v + \mathbf w}\, $, for any $\mathbf v, \mathbf w\in \gsgp$. This yields to the following:
	\begin{proposition*}
		There is a surjective homomorphism of Lie algebras
		$\psi: \goldman\to\g_{\gcsgp}$ given by the assigment
		\begin{align}
			P_{\mathbf v}\longmapsto \begin{cases}
				\xp{\mathbf v} & \text{if } a\geq 1 \ \text{or} \ a=0\ , b\geq 1\ , \\[3pt]
				\xm{-\mathbf v} & \text{otherwise}\ ,
			\end{cases}
			\qquad\mbox{where}\qquad \mathbf v\coloneqq (a,b)\in \Z^2\smallsetminus\{(0,0)\ .
		\end{align}
	\end{proposition*}
	
	We expect $\psi$ to be an isomorphism.
	
	\subsection*{Highest weight theory and Fock spaces}
	
	We conclude by outlining a further direction of 
	research, currently under investigation. The usual description of the highest weight theory of 
	Borcherds--Kac--Moody algebras does not extend immediately to 
	continuum Kac--Moody algebras, mainly due to the lack of simple roots and thus of
	fundamental weights. Nevertheless, we expect the existence of a continuum analog 
	of the theory of highest weight representations, Weyl groups, character formulas, both 
	in the classical and quantum setups. 
	
	A first example is given in \cite{sala-schiffmann-19}, where the second--named and third--named 
	authors define the Fock space for $\bfU_\upsilon (\sl(\R))$, considering 
	a continuum analogue of the usual combinatorial construction of the Fock space 
	of $\bfU_\upsilon (\sl(\infty))$ in terms of Maya diagrams. 
	In addition, the quantum group $\bfU_\upsilon (\sl(S^1))$ act on such a Fock space, in a way similar 
	to the \textit{folding procedure} of Hayashi--Misra--Miwa. It would be interesting to extend this construction to the case of an arbitrary continuum quiver $X$, producing a wide class of representations for the continuum quantum group $\bfU_\upsilon (\cqg{X})$, and 
	therefore for the continuum Kac--Moody algebra $\cqg{X}$.

	\subsection*{Outline}
	
	In Section~\ref{s:km-vkm} we give a brief review of Kac--Moody algebras, 
	their generalizations introduced by Saveliev and Vershik, and the Lie algebra 
	$\sl(\R)$ as a remarkable counterexample. A unifying approch is developed 
	in Section~\ref{s:sgp-lie} through the notion of \textit{semigroup Lie algebra}, whose 
	weight lattice is controlled by a partial semigroup. In Section~\ref{s:sgp-serre-relations} 
	we study a semigroup analogue of the classical Serre relations. We determine 
	necessary and sufficient conditions for such relations to hold and we prove them
	in several special cases. In Section~\ref{s:sgp-serre}, we identify a list of key properties,
	encoded by the notion of a good Cartan semigroup, which guarantee the existence of
	Serre relations (Theorem~\ref{thm:sym-serre-rel}). Finally, in Section~\ref{s:topological-quiver}, 
	we apply the abstract framework developed in the previous sections to describe
	a new class of infinite--dimensional Lie algebras associated with a one--dimensional
	real space. We introduce the notion of continuum quivers, whose defining data are 
	interpreted in terms of good  Cartan semigroup. We define the continuum Kac--Moody 
	algebra of a continuum quiver as a semigroup Lie algebra and prove a continuum analogue
	of the Gabber--Kac theorem (Theorem~\ref{thm:main}). Moreover, we prove that every continuum Kac--Moody algebra is isomorphic to an uncountable colimit of symmetric Borcherds--Kac--Moody algebras with at most isotropic simple roots (Theorem~\ref{thm:colim}). 
	Finally, in Appendix~\ref{app:sgp} we include a brief review of the basic definitions and 
	properties of the partial semigroups we are mainly interested in, while in Appendix~\ref{app:relationssl}, \ref{app:sgp-serre-general}, \ref{app:sym-serre-rel}, \ref{app:topologicalquiver} we report in details some of the most technical proofs.
	
	\subsection*{Acknowledgements}
	
	We are grateful to Antonio Lerario and Tatsuki Kuwagaki for many enlightening 
	conversations about the material in Section~\ref{s:topological-quiver} and to Fabio Gavarini for
	his comments and interest in the present work. This work was initiated while the 
	first--named author was visiting Kavli IPMU and completed while the second--named author 
	was visiting the University of Edinburgh. We are grateful to both institutions for their hospitality 
	and wonderful working conditions.
	
	\bigskip\section{Kac--Moody algebras and the Lie algebra of the real line}\label{s:km-vkm}
	
	In this section, we recall the definitions of Kac--Moody 
	algebras and their generalization introduced by Saveliev and Vershik. 
	We then introduce a distinguished Lie algebra $\sl(\R)$, which first 
	appeared in \cite{sala-schiffmann-17} and whose presentation is
	controlled by the topology of the real line. We prove that $\sl(\R)$
	admits a simpler presentation \emph{\`a la Kac--Moody}, depending 
	upon basic operations on open--closed connected intervals. Henceforth, 
	we fix a base field $\bsfld$ of characteristic zero.
	
	\subsection{Kac--Moody algebras}\label{ss:km}
	
	We recall the definition from \cite[Chapter~1]{kac-90}. Fix a finite set $\bfI$ 
	and a matrix $\gcm =(a_{ij})_{i,j\in\bfI}$ with entries in $\bsfld$. Recall that a realization $\rls{}\coloneqq (\h,\Pi,\Pi^{\vee})$ of $\gcm$ is the datum of 
	a finite dimensional $\bsfld$--vector space $\h$, and linearly independent vectors 
	$\Pi\coloneqq \{\alpha_i\}_{i\in\bfI}\subset\h^\ast$, $\Pi^{\vee}\coloneqq \{h_i\}_{i\in\bfI} 
	\subset\h$ such that $\alpha_i(h_j)= a_{ji}$. Note that these conditions imply $\dim\h\geqslant 2\vert\bfI\vert-\rank({\gcm})$. Moreover,
	up to a (non--unique) isomorphism, there is a unique realization of minimal dimension 
	$2\vert\bfI\vert-\rank({\gcm})$. 
	
	Let $\wt{\g}(\rls{\gcm})$ be the Lie 
	algebra generated by $\h$, $\{e_i, f_i\}_{i\in\bfI}$ with relations $[\h,\h]=0$ and 
	\begin{align}
		[h,e_i]=\alpha_i(h)\, e_i\ ,
		\quad
		[h,f_i]=-\alpha_i(h)\, f_i\ ,
		\quad
		[e_i,f_j]=\drc{ij}\, h_i\ .\label{eq:km-rel}
	\end{align}
	for any $h\in\h$ and $i,j\in\bfI$. Set
	\begin{align}
		\rtl_+\coloneqq\bigoplus_{i\in\bfI}\Z_{\geqslant0}\, {\alpha}_i\subseteq{\h}^\ast\ ,
	\end{align}
	$\rtl\coloneqq\rtl_+\oplus(-\rtl_+)$, and denote by $\wt{\n}_{+}$ (resp. $\wt{\n}_-$) the 
	subalgebra generated by $\{e_i\}_{i\in\bfI}$ (resp. $\{f_i\}_{i\in\bfI}$). Then, as vector spaces, 
	$\wt{\g}(\rls{\gcm})=\wt{\n}_+\oplus\h\oplus\wt{\n}_-$ with root space decomposition
	\begin{align}
		\wt{\n}_{\pm}=\bigoplus_{\genfrac{}{}{0pt}{}{\alpha\in\rtl_+}{\alpha\neq0}}\, \wt{\g}_{\pm\alpha}
	\end{align}
	where $\wt{\g}_{\pm\alpha}=\{X\in\wt{\g}(\rls{\gcm})\;\vert\;\forall h\in\h, [h,X]=\pm\alpha(h)X\}$. 
	Note also that $\wt{\g}_0=\h$ and $\dim\wt{\g}_{\pm\alpha}<\infty$.\\
	
	The \textit{Kac--Moody algebra} associated to the realization $\rls{\gcm}$ is the Lie 
	algebra $\g(\rls{\gcm})\coloneqq\wt{\g}(\rls{\gcm})/\r$, where $\r$ is the sum of all 
	two--sided graded ideals in $\wt{\g}(\rls{\gcm})$ having trivial intersection with $\h$. 
	In particular, as ideals, $\r=\r_+\oplus\r_-$, where $\r_{\pm}\coloneqq\r\cap\wt{\n}_{\pm}$. 
	\footnote{The terminology differs slightly from the one given in \cite{kac-90} where 
		$\g(\rls{\gcm})$ is called a Kac--Moody algebra if $\gcm$ is a generalised Cartan 
		matrix (cf.\ Remark~\ref{rem:r=s}) and $\rls{\gcm}$ is the minimal realization. Note also 
		that in \cite[Theorem~1.2]{kac-90} $\r$ is set to be the sum of all two--sided ideals, not 
		necessarily graded. However, since the functionals $\alpha_i$ are linearly independent in 
		$\h^\ast$ by construction, $\r$ is automatically graded and satisfies $\r=\r_+\oplus\r_-$ 
		(cf.\ \cite[Proposition~1.5]{kac-90}).} \\
	
	\begin{remark}\label{rem:r=s}
		If $\gcm $ is a \textit{generalised Cartan matrix} (\ie $a_{ii}=2$, $a_{ij}\in\Z_{\leqslant 0}$, 
		$i\neq j$, and $a_{ij}=0$ implies $a_{ji}=0$), then $\r$ contains the ideal $\mathfrak{s}$ generated 
		by the Serre relations
		\begin{align}\label{eq:SerrerelgCm}
			\sfad(e_i)^{1-a_{ij}}(e_j)=0=\sfad(f_i)^{1-a_{ij}}(f_j)\qquad i\neq j
		\end{align}
		and, if $\gcm$ is symmetrizable, is generated by $\mathfrak{s}$ \cite{gabber-kac-81}. 
		A similar property holds, more generally, for any symmetrizable $\gcm$ such that $a_{ij}\in\Z_{\leqslant0}$, 
		$i\neq j$, and $2a_{ij}/a_{ii}\in\Z$ whenever $a_{ii}>0$. In this case, $\g(\gcm)$ is 
		called a Borcherds--Kac--Moody algebra and the corresponding maximal ideal is generated by
		the Serre relations
		\begin{align}\label{eq:Serrerel-BKM-1}
			\sfad(e_i)^{1-\frac{2}{a_{ii}}a_{ij}}(e_j)=0=\sfad(f_i)^{1-\frac{2}{a_{ii}}a_{ij}}(f_j)
			\qquad \text{ if }\quad a_{ii}>0
		\end{align}
		and
		\begin{align}\label{eq:Serrerel-BKM-2}
			[e_i,e_j]=0=[f_i,f_j]\qquad 
			\text{ if }\quad a_{ii}\leqslant 0\quad \text{ and } \quad a_{ij}=0
		\end{align}
		for $i\neq j$ (cf.\ \cite[Corollary 2.6]{borcherds-88}).
	\end{remark}
	
	Since $\r=\r_+\oplus\r_-$, the Lie algebra $\g(\rls{\gcm})$ has an induced triangular decomposition 
	$\g(\rls{\gcm})=\n_-\oplus\h\oplus\n_+$ (as vector spaces), where
	\begin{align}
		\n_\pm\coloneqq \bigoplus_{\alpha\in\rtl_+\setminus\{0\}}\g_{\pm\alpha}\ , \quad 
		\g_{\alpha}\coloneqq \{X\in{\g(\rls{\gcm})}\;\vert \; \forall\, h\in{\h},\, [h,X]=\alpha(h)\, X\}\ .  
	\end{align}
	Note that $\dim\g_{\alpha}<\infty$. The set of \emph{positive roots} is $\rts_+\coloneqq 
	\{\alpha\in\rtl_+\setminus\{0\}\;\vert \; {\g}_{\alpha}\neq0\}$.
	
	\begin{remark}\label{sss:der-km}
		
		The derived subalgebra $\g(\rls{\gcm})'\coloneqq[\g(\rls{\gcm}),\g(\rls{\gcm})]$ is
		generated by $\{e_,f_i,h_i\}_{i\in\bfI}$ 
		and admits a presentation similar to that of $\g(\rls{\gcm})$. Namely, let $\wt{\g}'$ be the 
		Lie algebra generated by $\{h_i,e_i,f_i\}_{i\in\bfI}$ with relations
		\begin{align}
			[h_i,h_j]=0\ ,
			\quad
			[h_j,e_i]=\alpha_i(h_j)\, e_i\ ,
			\quad
			[h_j,f_i]=-\alpha_i(h_j)\, f_i\ ,
			\quad
			[e_i,f_j]=\drc{ij}\, h_i\ .\label{eq:der-km}
		\end{align}
		Then, $\wt{\g}'$ has a $\rtl$--gradation defined by $\deg(e_i)=\alpha_i$, $\deg(f_i)=-\alpha_i$, 
		$\deg(h_i)=0$, and $\wt{\g}'_0=\h'$, where the latter is the $|\bfI|$--dimensional span of 
		$\{h_i\}_{i\in\bfI}$. The quotient of $\wt{\g}'$ by the sum of all two--sided graded ideals with 
		trivial intersection with $\h'$ is easily seen to be canonically isomorphic to $\g(\rls{\gcm})'$.  
		
		Recall 
		that a direct sum of vector spaces $L=L_{-1}\oplus L_0\oplus L_{+1}$ is a \textit{local Lie algebra} 
		if there are bilinear maps $L_i\times L_j\to L_{i+j}$ for 
		$\vert i\vert, \vert j\vert, \vert i+j\vert \leqslant 1$, such that antisymmetry and Jacobi identity 
		hold whenever they make sense. We write $\pm$ instead of $\pm1$. It follows that $\wt{\g}(\rls{\gcm})'\coloneqq[\wt{\g}(\rls{\gcm}),\wt{\g}(\rls{\gcm})]$ 
		is freely generated by the local Lie algebra $L=L_{-}\oplus L_0\oplus L_{+}$ where 
		\begin{align}
			L_{-}\coloneqq \bigoplus_i\, \bsfld\cdot f_i\ , \quad L_0\coloneqq \bigoplus_i\,\bsfld\cdot h_i\ , 
			\quad L_{+}\coloneqq\bigoplus_i\,\bsfld\cdot e_i \ , 
		\end{align}
		with bracket defined by equation \eqref{eq:der-km}.
	\end{remark}
	
	\begin{remark}\label{ss:ext-KM}
		It is sometimes convenient to consider Kac--Moody algebras associated to a non--minimal
		realization (cf.\ \cite{feigin-zelevinsky-85, maulik-okounkov-12, appel-toledano-19b}).
		Let $\crls{\gcm}=(\ol{\h},\ol{\Pi},\ol{\Pi}^\vee)$ be the realization given by 
		$\ol{\h}\cong\bsfld^{2|\bfI |}$ with basis $\{\hcor{i}\}_{i\in\bfI}\cup\{\cow{i}\}_{i\in\bfI}$, 
		$\ol{\Pi}^\vee=\{\hcor{i}\}_{i\in\bfI}$ and $\ol{\Pi}=\{\alpha_i\}_{i\in\bfI}\subset\ol{\h}^*$, 
		where $\alpha_i$ is defined by
		\begin{align}
			\alpha_i(\hcor{j})=a_{ji}
			\qquad\mbox{and}\qquad
			\alpha_i(\cow{j})=\delta_{ij}
		\end{align}
		We refer to $\crls{\gcm}$ as the {\it canonical} realization of
		$\gcm$, and denote by $\Lambda^\vee\subset\ol{\h}$ the $|\bfI|$--dimensional
		subspace spanned by $\{\cow{i}\}_{i\in\bfI}$. 
		Let $\mrls{\gcm}$ be a minimal realization of $\gcm$. It is easy to check that the Kac--Moody 
		algebra $\g(\crls{\gcm})$ is a central extension of $\g(\mrls{\gcm})$, \ie $\g(\crls{\gcm})\simeq
		\g(\mrls{\gcm})\oplus\c$, with $\dim\c=\rank(\gcm)$.
	\end{remark}
	
	\subsection{Saveliev--Vershik algebras}\label{ss:cont-lie}
	
	In \cite{saveliev-vershik-91}, Saveliev and Vershik introduced the notion of \textit{continuum 
		Lie algebras}, providing a generalization of Kac--Moody algebras and covering a wide 
	spectrum of examples including Lie algebras arising, for example, from ergodic transformations, 
	crossed products, or infinitesimal area--preserving diffeomorphisms \cite{saveliev-vershik-90-2, 
		saveliev-vershik-92, vershik-92, vershik-02}. 
	Their generalization is entirely different from the one we shall introduce in Section~\ref{s:topological-quiver}. We briefly recall their construction and, in order to avoid confusion, 
	we shall refer to it as  \emph{Saveliev--Vershik algebras}.\\
	
	Let $(\vaa,\cdot)$ be an arbitrary associative $\bsfld$--algebra (possibly infinite--dimensional, 
	non--unital, and non--commutative) endowed with three bilinear mappings $\kappa_{\pm},\kappa_0\colon \vaa\times \vaa\to \vaa$, and $L$ 
	a vector space of the form $L_-\oplus L_0\oplus L_+$, where $L_{\epsilon}\simeq \vaa$, $\epsilon=\pm,0$.
	We denote $X_{\epsilon}(\phi)$ the element in $L_{\epsilon}$, $\epsilon=\pm,0$, corresponding to 
	$\phi\in \vaa$, and $[\cdot,\cdot]\colon L\ten L\to L$ the bilinear map given by 
	\begin{align}
		[X_0(\phi),X_0(\psi)]=& X_0([\phi,\psi])\label{eq:ver-1}\ , \\
		[X_0(\phi),X_{\pm}(\psi)]=& X_{\pm}(\kappa_{\pm}(\phi,\psi))\label{eq:ver-2}\ , \\
		[X_+(\phi),X_{-}(\psi)]=& X_{0}(\kappa_{0}(\phi,\psi))\label{eq:ver-3}\ ,
	\end{align}
	where $\phi,\psi\in\vaa$, $[\phi,\psi]\coloneqq \phi\cdot\psi-\psi\cdot\phi$.
	The following is straightforward.
	
	\begin{lemma}[\cite{saveliev-vershik-91}]\label{lem:Jacobi-1}
		The vector space $L$ is a local Lie algebra with bracket $[\cdot,\cdot]$ if and only if 
		the maps $\kappa_{\epsilon}$ satisfy the following relations:
		\begin{align}
			\kappa_{\pm}([\phi,\psi],\chi)=& 
			\kappa_{\pm}(\phi,\kappa_{\pm}(\psi,\chi))-\kappa_{\pm}(\psi,\kappa_{\pm}(\phi,\chi))
			\label{eq:ver-datum-1}\ ,\\
			[\phi, \kappa_{0}(\psi,\chi)]=& 
			\kappa_{0}(\kappa_+(\phi,\psi),\chi)+\kappa_{0}(\psi,\kappa_{-}(\phi,\chi)) \ ,
			\label{eq:ver-datum-2}
		\end{align}
		where $\phi,\psi,\chi\in \vaa$.
	\end{lemma}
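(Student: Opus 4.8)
The plan is to verify directly that the prescribed bracket $[\cdot,\cdot]\colon L\ten L\to L$ makes $L=L_-\oplus L_0\oplus L_+$ into a local Lie algebra precisely when \eqref{eq:ver-datum-1} and \eqref{eq:ver-datum-2} hold, by checking antisymmetry and the Jacobi identity on all triples of homogeneous elements for which these axioms make sense (i.e.\ those whose degrees $i,j,k$ satisfy $|i|,|j|,|k|,|i+j|,|j+k|,|i+j+k|\leqslant 1$). First I would dispose of antisymmetry: on $L_0\times L_0$ it follows from $[\phi,\psi]=-[\psi,\phi]$ in $\vaa$; on $L_0\times L_\pm$ and $L_+\times L_-$ the bracket is defined only in one order, with the opposite order fixed by the antisymmetry convention, so there is nothing to check. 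It is convenient to also extend $\kappa_0$ to a skew pairing by declaring $\kappa_0(\psi,\phi)$ in $[X_-(\phi),X_+(\psi)]=-X_0(\kappa_0(\psi,\phi))$; the genuinely new content is the Jacobi identity.

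Next I would enumerate the relevant triples up to the symmetries of the Jacobi identity. There are essentially three families. (i) All three factors in $L_0$: here the Jacobi identity for $[\cdot,\cdot]$ on $L$ reduces, via \eqref{eq:ver-1}, exactly to the Jacobi identity for the commutator on the associative algebra $\vaa$, which holds automatically; this imposes no condition. (ii) Two factors in $L_0$, one in $L_\pm$ (say $X_0(\phi),X_0(\psi),X_+(\chi)$): expanding $[[X_0(\phi),X_0(\psi)],X_+(\chi)]+[[X_0(\psi),X_+(\chi)],X_0(\phi)]+[[X_+(\chi),X_0(\phi)],X_0(\psi)]=0$ using \eqref{eq:ver-1} and \eqref{eq:ver-2}, and then equating the $L_+$-components, yields precisely the ``$\kappa_\pm$ is a Lie-algebra action'' identity \eqref{eq:ver-datum-1} (with the $+$ sign; the $L_-$ case gives the $-$ sign). (iii) One factor in $L_0$, one in $L_+$, one in $L_-$ (say $X_0(\phi),X_+(\psi),X_-(\chi)$): expanding the Jacobi sum using \eqref{eq:ver-2} and \eqref{eq:ver-3} and extracting the $L_0$-component gives $X_0\big([\phi,\kappa_0(\psi,\chi)]\big)=X_0\big(\kappa_0(\kappa_+(\phi,\psi),\chi)\big)+X_0\big(\kappa_0(\psi,\kappa_-(\phi,\chi))\big)$, i.e.\ \eqref{eq:ver-datum-2}. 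One must also check the triples with two factors in $L_+$ (or two in $L_-$): e.g.\ $X_+(\phi),X_+(\psi),X_-(\chi)$ would land in $L_{+1}$ via a product $[L_+,L_+]$, but $[L_+,L_+]$ is not part of the local structure, so by the local-Lie-algebra convention no identity is required; similarly $X_+,X_+,X_+$ has total degree $3$ and is excluded. This confirms that the full list of constraints is exactly \eqref{eq:ver-datum-1}--\eqref{eq:ver-datum-2}, proving both implications simultaneously since each computation is an equality of the relevant homogeneous components.

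I do not expect a serious obstacle here — the lemma is a bookkeeping exercise — but the one place to be careful is the sign and ordering bookkeeping in case (iii) and in the bilinearity/antisymmetry extension of $\kappa_0$: one must make sure that the convention $[X_-(\chi),X_+(\psi)]:=-X_0(\kappa_0(\psi,\chi))$ is used consistently, and that in the Jacobi expansion the three cyclic terms are assembled with the correct signs before reading off \eqref{eq:ver-datum-2}. A clean way to organize this is to write each Jacobi triple in the standard cyclic form $[[a,b],c]+[[b,c],a]+[[c,a],b]=0$, substitute the defining relations \eqref{eq:ver-1}--\eqref{eq:ver-3} termwise, and then project onto $L_0$, $L_+$, $L_-$ respectively; each projection is manifestly an instance of one of the two displayed identities (or is vacuous), which finishes the proof.
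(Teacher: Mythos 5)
Your proposal is correct and follows essentially the same route as the paper: a direct case-by-case verification of the Jacobi identity on the admissible homogeneous triples, reading off \eqref{eq:ver-datum-1} from the $(X_0,X_0,X_\pm)$ triples and \eqref{eq:ver-datum-2} from the $(X_0,X_+,X_-)$ triples. Your extra bookkeeping (the automatic $(L_0,L_0,L_0)$ case and the exclusion of triples involving $[L_+,L_+]$ or $[L_-,L_-]$) is implicit in the paper's remark that the Jacobi identities are only defined for those two families, so nothing of substance differs.
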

	
	Let $(\vaa,\kappa_{\epsilon})$ be an associative algebra endowed with three bilinear maps
	$\kappa_{\epsilon}\colon \vaa\ten\vaa\to\vaa$, $\epsilon=\pm,0$, satisfying \eqref{eq:ver-datum-1}, 
	\eqref{eq:ver-datum-2}. We denote by $\wt{\g}(\vaa,\kappa_{\epsilon})$ the Lie algebra freely generated by $L$.
	Note that $\wt{\g}(\vaa,\kappa_{\epsilon})$ is $\Z$--graded, \ie
	\begin{align}
		\wt{\g}(\vaa,\kappa_{\epsilon})=\bigoplus_{n\in\Z}\, \wt{\g}_n\ ,
	\end{align}
	with homogeneous components $\wt{\g}_0\coloneqq L_0$, and
	\begin{align}
		\wt{\g}_n\coloneqq \begin{cases}
			[\wt{\g}_{n-1},L_+] & \text{if $n>0$} \ ,\\[4pt]
			[\wt{\g}_{n+1},L_-] & \text{if $n<0$}\ .
		\end{cases}
	\end{align}
	
	\begin{definition}[\cite{saveliev-vershik-91}]\label{def:sav-ver}
		The \textit{Saveliev--Vershik algebra} associated to the datum 
		$(\vaa,\kappa_{\epsilon})$ is the Lie algebra  
		\begin{align}
			\g(\vaa,\kappa_{\epsilon})\coloneqq \wt{\g}(\vaa,\kappa_{\epsilon})/\r \ ,
		\end{align}
		where $\r$ is the sum of all two--sided homogeneous ideals in $\wt{\g}(\vaa,\kappa_{\epsilon})$ having trivial 
		intersection  with $L_0$. In particular, $\g(\vaa,\kappa_{\epsilon})$ is $\Z$--graded with $\g_0=L_0$.
	\end{definition}
	
	The examples given in \cite{saveliev-vershik-90-2, saveliev-vershik-92, vershik-92, vershik-02} 
	belong to a special case of this formulation, where $\vaa$ is a commutative algebra, 
	$\kappa, s\colon \vaa\to \vaa$ are distinguished linear mappings, and
	\begin{align}\label{eq:cont-loc-lie-simple}
		\kappa_{\pm}(\phi,\psi)\coloneqq \pm\psi\cdot\kappa(\phi)
		\quad\text{and}\quad
		\kappa_0(\phi,\psi)\coloneqq s(\phi\cdot\psi)
	\end{align}
	Note that in this case the conditions \eqref{eq:ver-datum-1} 
	and \eqref{eq:ver-datum-2} are automatically satisfied. 
	
	\begin{remark}
		Definition~\ref{def:sav-ver} provides a straightforward generalization of \textit{derived} 
		Kac--Moody algebras as described in Remark~\ref{sss:der-km} in terms of the usual $3|\bfI|$ Chevalley generators. 
		Namely, let $\bfI$ and $\gcm$ be as in Section~\ref{ss:km}.  Then, one sees immediately that
		$\g(\rls{})'=\g(\vaa,\kappa_{\epsilon})$, where $\vaa$ is the commutative algebra $\bsfld^{|\bfI|}$ endowed with coordinate multiplication, 
		$(e_i,h_i,f_i)=(X_+(v_i),X_0(v_i),X_-(v_i))$, $i\in\bfI$, where $v_i\in\bsfld^{|\bfI|}$ are the standard 
		unit vectors,  and the linear maps $\kappa_{\epsilon}$ are defined as in \eqref{eq:cont-loc-lie-simple} with $\kappa(v)
		\coloneqq\gcm v$ and $s(v)\coloneqq v$ for any $v\in\vaa$.
		
		It is notable that, while the Kac--Moody algebra associated to the minimal realization does not fit
		in this formalism, the \textit{canonical} Kac--Moody algebra $\g(\crls{\gcm})$ does, up to a minor change. Namely, it is enough to consider a local Lie algebra $L$ with $L_0=\vaa\oplus\vaa$, 
		whose additional elements are denoted $X_0^\vee(\phi)$, $\phi\in\vaa$ 
		and satisfy $[X_0^\vee(\phi),X_0^\vee(\psi)]=0=[X_0(\phi),X_0^\vee(\psi)] $ and
		$[X_0^\vee(\phi),X_{\pm}(\psi)]=\pm X_{\pm}(\phi\cdot\psi)$.
	\end{remark}
	
	\subsection{The Lie algebra of the line}\label{ss:semigroup-line}
	
	In \cite{sala-schiffmann-17}, the last-two-named authors introduced quantum groups $\bfU_\upsilon(\sl(\K))$
	with $\KZQR$. Since the cases of $\KZQ$ are easily deduced from that of $\K=\R$, for simplicity, we focus 
	only on the Lie algebra $\sl(\R)$.

	\subsubsection{Intervals}\label{sss:definition-sl(K)}
	
	Roughly speaking, $\sl(\R)$ is a Lie algebra generated by elements labeled by \textit{intervals} of $\R$, 
	subject to some quadratic relations, whose coefficients depend upon a bilinear form on the space of \textit{characteristic functions} of such intervals. The values of such bilinear form play the same role as the coefficient of a Cartan matrix. 
	
	In order to give the precise definition of $\sl(\R)$, we need to introduce some notation. 
	First, we say that a subset $J\subset \R$ is an \textit{interval} 
	if it is an open--closed interval of the form $J=(a,b]\coloneqq \{x\in \R\;\vert\; a<x\leqslant b\}$.  
	
	Let $\intsf{\R}{}$ be the set of all intervals in $\R$ and define two partial maps 
	$\sgpp,\sgpm\colon\intsf{\R}{}\times\intsf{\R}{}\to\intsf{\R}{}$ as follows:
	\begin{align}
		J\sgpp J'\coloneqq &
		\begin{cases}
			J\cup J' & \text{if } J\cap J'=\emptyset\text{ and }J\cup J'\text{ is connected}\ ,\\[4pt]
			\text{n.d.} & \text{otherwise}\ ,
		\end{cases}
		\label{eq:sgp-intv-add}\\
		J\sgpm J'\coloneqq &
		\begin{cases}
			J\setminus J' & \text{if } J\cap J'=J'\text{ and }J\setminus J'\text{ is connected}\ ,\\[4pt]
			\text{n.d.} & \text{otherwise}\ .
		\end{cases}\label{eq:sgp-intv-sub}
	\end{align}
	Denote by $\intsf{\R}{}^{(2)}_{\sgpp}$ the set of pairs $(J, J')$ such that $J\sgpp J'$ is defined. Similarly, let 
	$\intsf{\R}{}^{(2)}_{\sgpm}$ be the set of pairs $(J, J')$ that $J\sgpm J'$ is defined.  Set 
	\begin{align}
		\delta_{J\sgpp J'}\coloneqq 
		\begin{cases}
			1 & \text{if } (J, J')\in \intsf{\R}{}^{(2)}_{\sgpp}\ , \\
			0 & \text{otherwise} \ ,
		\end{cases}
		\quad\text{and}\quad
		\delta_{J\sgpm J'}\coloneqq 
		\begin{cases}
			1 & \text{if } (J, J')\in \intsf{\R}{}^{(2)}_{\sgpm}\ , \\
			0 & \text{otherwise} \ .
		\end{cases}
	\end{align}
	
	We adopt the following notation, distinguishing all relative positions of two intervals. For any two 
	intervals $J=(a,b]$ and $J'=(a',b']$, we write
	\begin{itemize}\itemsep0.3cm
		\item $J\intnext J'$ if $b=a'$ (\textit{adjacent}) 
		\item $J\perp J'$ if $b<a'$ or $b'<a$ (\textit{disjoints}) 
		\item $J\rsub J'$ if $a=a'$ and $b<b'$ (\textit{closed subinterval})
		\item $J\lsub J'$ if $a'<a$ and $b=b'$ (\textit{open subinterval})
		\footnote{The symbol $\rsub$ (resp. $\lsub$) should be read as \textit{$J$ is a proper subinterval in 
				$J'$ starting from the left (resp. right) endpoint}.}
		\item $J<J'$ if $a'<a<b<b'$ (\textit{strict subinterval})
		\item $J\intcap J'$ if $a<a'<b<b'$ (\textit{overlapping})
	\end{itemize}
	
	In particular we have that
	\begin{align}
		(J,J')\in\intsf{\R}{}^{(2)}_{\sgpp} &\quad\text{if and only if}\quad J\intnext J' \text{ or } J'\intnext J\ ,\\[2pt]
		(J,J')\in\intsf{\R}{}^{(2)}_{\sgpm} &\quad\text{if and only if}\quad J'\rsub J \text{ or } J'\lsub J\ .
	\end{align}
	
	\subsubsection{Euler form}
	We denote by $\fun{\R}{}$ the algebra of piecewise constant, left--continuous 
	functions $f\colon \R\to \R$, with bounded support and finitely many points of discontinuity. 
	More explicitly, $f\in\fun{\R}{}$ if and only if $f$ is a linear
	combination of finitely many characteristic functions $\cf{J}$ with $J\in\intsf{\R}{}$.
	
	For any $f,g\in\fun{\R}{}$, we set
	\begin{align}\label{eq:bilinear-form-interval}
		\abf{f}{g}\coloneqq \sum_xf_-(x)(g_-(x)-g_+(x)) \quad\text{and}\quad \rbf{f}{g}\coloneqq\abf{f}{g}+\abf{g}{f}
	\end{align}
	where $h_{\pm}(x)=\lim_{t\to0^+}h(x\pm t)$. We have
	\begin{align}\label{eq:abf-line}
		\abf{\cf{J}}{\cf{J'}}=
		\begin{cases}
			\phantom{+}1 & \text{if } J=J',\, J'\rsub J,\, J\lsub J',\, J'\intcap J\ ,\\[4pt]
			\phantom{+}0 & \text{if } J\perp J',\, J'\intnext J,\, J\rsub J',\, J'\lsub J,\, J<J',\, J'<J\ ,\\[4pt]
			-1 & \text{if } J\intnext J',\, J\intcap J'\ .
		\end{cases}
	\end{align}
	Thus, 
	\begin{align}
		\rbf{\cf{J}}{\cf{J'}}=
		\begin{cases}
			{\phantom{+}}2 & \text{if } J=J'\ ,\\[4pt]
			{\phantom{+}}1 & \text{if } (J,J')\in\intsf{\R}{}^{(2)}_{\sgpm} \text{ or } (J',J)\in\intsf{\R}{}^{(2)}_{\sgpm}\ ,\\[4pt]
			-1 & \text{if } (J,J')\in\intsf{\R}{}^{(2)}_{\sgpp}\ ,\\[4pt]
			{\phantom{+}}0 & \text{otherwise}\ .
		\end{cases}
	\end{align}

	\subsubsection{The first definition}  We are ready to give the definition of $\sl(\R)$.
	\begin{definition}\label{def:sl(K)}
		Let $\sl(\R)$ be the Lie algebra generated by elements $e_J, f_J, h_J$, with $J\in\intsf{\R}{}$, 
		modulo the following set of relations:
		\begin{itemize}\itemsep0.3cm
			\item \textit{Kac--Moody type relations:} for any two intervals $J_1, J_2$,
			\begin{align}\label{eq:kac-rel-1}
				\begin{aligned}
					[h_{J_1},h_{J_2}] &=0\ ,\\[3pt]
					[h_{J_1}, e_{J_2}] &=( \cf{J_1},\cf{J_2})\, e_{J_2}\ , \\[3pt]
					[h_{J_1}, f_{J_2}] &=-( \cf{J_1},\cf{J_2})\, f_{J_2}\ , 
				\end{aligned}
			\end{align}
			\begin{align}\label{eq:kac-rel-2}
				[e_{J_1},f_{J_2}]=
				\begin{cases}
					h_{J_1} & \text{if }  J_1=J_2\ ,\\[3pt]
					0 & \text{if }  J_1\perp J_2, J_1\intnext J_2, \text{ or } J_2\intnext J_1\ ,
				\end{cases}
			\end{align}
			
			\item \textit{join relations:} for any two intervals $J_1, J_2$ with $(J_1,J_2)\in\intsf{\R}{\K}^{(2)}_{\sgpp}$,
			\begin{align}\label{eq:Joining-h-finite}
				h_{J_1\sgpp J_2}&=h_{J_1}+ h_{J_2}\ ,\\[3pt]
				e_{J_1\sgpp J_2}&=(-1)^{\abf{\cf{J_2}}{\cf{J_1}}}
				[e_{J_1}, e_{J_2}]\ ,\label{eq:Joining-e-finite}\\[3pt]
				f_{J_1\sgpp J_2}&= (-1)^{\abf{\cf{J_1}}{\cf{J_2}}}
				[f_{J_1}, f_{J_2}]\ ,\label{eq:Joining-f-finite}
			\end{align}
			
			\item \textit{nest relations:} for any nested $J_1,J_2\in\intsf{\R}{\K}$ (that is, such that	$J_1=J_2$, 
			$J_1\perp J_2$, $J_1<J_2$, $J_2<J_1$,$J_1\rsub J_2$, $J_1\lsub J_2$, $J_2\rsub J_1$, or $J_2\lsub J_1$),
			\begin{align}\label{eq:nest-finite}
				[e_{J_1},e_{J_2}]=0\quad\text{and}\quad [f_{J_1}, f_{J_2}]=0\ .
			\end{align}
			
		\end{itemize}
	\end{definition}
	\begin{remark}
		It is easy to check that the bracket is anti--symmetric and satisfies the Jacobi identity.
		Note that the join relations are consistent with anti--symmetry, since, whenever $J\sgpp J'$ is defined, 
		$(-1)^{\abfcf{J}{J'}}=-(-1)^{\abfcf{J'}{J}}$. Moreover, the combination of join and nest relations yields 
		the \textit{(type $\sfA$) Serre relations}  ($J\neq J'$)
		\begin{align}\label{eq:serre-sl(K)}
			\begin{aligned}
				\begin{array}{ll}
					[e_J, [e_J, e_{J'}]]=0=[f_J, [f_J, f_{J'}]] & \text{if } \rbfcf{J}{J'}=-1 \ ,  \\[4pt]
					[e_J, e_{J'}]=0=[f_J, f_{J'}] & \text{if } \rbfcf{J}{J'}=0\ .
				\end{array}
			\end{aligned}
		\end{align}
	\end{remark}

	\subsubsection{A new presentation}
	The datum $(\intsf{\R}{}, \sgpp, \sgpm)$ has the role of a \textit{continuum root system} of 
	$\sl(\R)$. It is therefore natural to ask whether $\sl(\R)$ is an example of a Saveliev--Vershik algebra. 
	Namely, set $\calA\coloneqq\fun{\R}{}$, and define the maps $\kappa_{0}, \kappa_{\pm}
	\colon \calA\times \calA\to \calA$ by setting
	\begin{align}
		\kappa_{0}(\cf{J},\cf{J'})\coloneqq \drc{J,J'}\cf{J'}\quad\text{and}\quad
		\kappa_{\pm}(\cf{J},\cf{J'})\coloneqq \pm\rbf{\cf{J}}{\cf{J'}}\cf{J}\ .
	\end{align}
	The maps $\kappa_0$ and $\kappa_\pm$ satisfy the relations \eqref{eq:ver-datum-1} and 
	\eqref{eq:ver-datum-2}. Thus there exists a continuum Lie algebra $\g(\fun{\R}{}, \kappa_{0}, \kappa_{\pm})$. 
	Nonetheless, we shall show in this section that $\g(\fun{\R}{}, \kappa_{0}, \kappa_{\pm})$ and $\sl(\R)$ are not 
	the same. More generally, the latter cannot be a Saveliev--Vershik algebra. 
	The first result we need is the following.
	\begin{proposition}\label{prop:relationssl}
		The relations \eqref{eq:kac-rel-2}, \eqref{eq:Joining-e-finite}, \eqref{eq:Joining-f-finite}, 
		and \eqref{eq:nest-finite} can be replaced by
		\begin{align}
			[e_{J_1},f_{J_2}]&=\drc{J_1,J_2}h_{J_1}+(-1)^{\abf{\cf{J_1}}{\cf{J_2}}}\rbf{\cf{J_1}}{\cf{J_2}}
			\left(e_{J_1\sgpm J_2}-f_{J_2\sgpm J_1}\right)\ ,\label{eq:kac-rel-3-new}
		\end{align}
		and
		\begin{align}\label{eq:serre-new}
			\begin{aligned}
				[e_{J_1}, e_{J_2}]&=(-1)^{\abf{\cf{J_2}}{\cf{J_1}}}e_{J_1\sgpp J_2}\ ,\\
				[f_{J_1}, f_{J_2}]&=(-1)^{\abf{\cf{J_1}}{\cf{J_2}}}f_{J_1\sgpp J_2}\ .
			\end{aligned}
		\end{align}
	\end{proposition}
	
	\begin{proof}
		It is clear that \eqref{eq:kac-rel-3-new} reduces to \eqref{eq:kac-rel-2}, while 
		\eqref{eq:serre-new} reduces to \eqref{eq:Joining-e-finite}, \eqref{eq:Joining-f-finite}, 
		and \eqref{eq:nest-finite}, since $J_1\sgpp J_2$ is not defined 
		whenever $J_1$ and $J_2$ are nested, and thus the RHS of \eqref{eq:serre-new} equals zero. Conversely, we shall prove that \eqref{eq:kac-rel-3-new} and \eqref{eq:serre-new} hold in $\sl(\R)$. The proof is based on a case--by--case inspection described in Appendix~\ref{app:relationssl}.
	\end{proof}
	
	We are now able to give a more efficient presentation of $\sl(\R)$. In order to stress the analogy 
	with the definition provided in Section~\ref{s:topological-quiver}, we adopt a slightly different
	notation. Set $\xz{J}\coloneqq h_J$, $\xp{J}\coloneqq e_J$, $\xm{J}\coloneqq f_J$ for any 
	inteval $J$.
	
	\begin{corollary}\label{cor:slR-presentation}
		$\sl(\R)$ is presented on the generators $\xpm{J}$, $\xz{J}$, $J\in\intsf{\R}{}$, with relations
		\begin{align}
			\xz{J\sgpp J'} & = \drc{J\sgpp J'}(\xz{J}+\xz{J'})\ ,\\[2pt]
			[\xz{J},\xz{J'}] & = 0\ ,\\[2pt]
			[\xz{J},\xpm{J'}] & = \pm\rbf{\cf{J}}{\cf{J'}}\xpm{J'}\ ,\\[2pt] \label{eq:non-local}
			[\xp{J},\xm{J'}] & = \drc{J,J'}\xz{J}+(-1)^{\abf{\cf{J}}{\cf{J'}}}\rbf{\cf{J}}{\cf{J'}}
			\left(\xp{J\sgpm J'}-\xm{J'\sgpm J}\right)\ ,\\[2pt]
			[\xpm{J},\xpm{J'}] & = \pm(-1)^{\abf{\cf{J'}}{\cf{J}}}\xpm{J\sgpp J'}\ ,
		\end{align}
		where we assume that $\xe{J_1\odot J_2}=0$ whenever $J_1\odot J_2$ 
		is not defined, for $\odot=\sgpp,\sgpm$ and $\epsilon=\pm,0$. 
	\end{corollary}
	The above presentation of $\sl(\R)$ makes clear that the main reason for which 
	$\sl(\R)$ cannot coincide with $\g(\fun{\R}{}, \kappa_0, \kappa_\pm)$ is the relation 
	\eqref{eq:non-local}. Indeed, the latter Lie algebra is, by definition, generated by a local 
	Lie algebra $L$ (cf.\ Section~\ref{ss:cont-lie}), while this is not possible $\sl(\R)$.
	As we shall see in the following sections, we need to consider a wider class of Lie algebras,  containing both 
	Saveliev--Vershik Lie algebras (hence, also Kac--Moody algebras) and $\sl(\R)$ by relaxing the 
	condition of locality in the spirit of  relation \eqref{eq:non-local}.

	\begin{remark}\label{rmk:slK}
		As we pointed out at the beginning of this section, the description of the Lie algebras $\sl(\Z)$ and $\sl(\Q)$
		are easily deduced from that of $\sl(\R)$. More precisely, one can consider the subset $\intsf{\R}{\K}\subset\intsf{\R}{}$
		consisting of intervals with boundaries in $\K=\Z,\Q$. The Lie algebra $\sl(\K)$, $\KZQ$, is then realized
		as the subalgebra in $\sl(\R)$ generated by $\xpm{J}$ and $\xz{J}$ with $J\in\intsf{\R}{\K}$. In particular, there
		is a canonical chain of embeddings $\sl(\Z)\subseteq\sl(\Q)\subseteq\sl(\R)$.
	\end{remark}
	
	\begin{remark}
		We conclude this section by observing that $\sl(\R)$ should be rather thought of as
		a continuum analogue of the familiar Lie algebra $\sl(\infty)$.
		Let
		\begin{align}
			\begin{tikzpicture}[xscale=1.5,yscale=-0.5]
				\node (A0_0) at (0, 0) {$1$};
				\node (A0_1) at (1, 0) {$2$};
				\node (A0_2) at (2, 0) {$3$};
				\node (A0_3) at (3, 0) {$4$};
				\node (A0_5) at (5, 0) {$n-2$};
				\node (A0_6) at (6, 0) {$n-1$};
				\node (A1_0) at (0, 1) {$\bullet$};
				\node (A1_1) at (1, 1) {$\bullet$};
				\node (A1_2) at (2, 1) {$\bullet$};
				\node (A1_3) at (3, 1) {$\bullet$};
				\node (A1_5) at (5, 1) {$\bullet$};
				\node (A1_6) at (6, 1) {$\bullet$};
				\path (A1_0) edge [-]node [auto] {$\scriptstyle{}$} (A1_1);
				\path (A1_1) edge [-]node [auto] {$\scriptstyle{}$} (A1_2);
				\path (A1_2) edge [-]node [auto] {$\scriptstyle{}$} (A1_3);
				\path (A1_3) edge [dashed, -]node [auto] {$\scriptstyle{}$} (A1_5);
				\path (A1_5) edge [-]node [auto] {$\scriptstyle{}$} (A1_6);
			\end{tikzpicture} 
		\end{align}
		be the Dynkin diagram of type $A_n$. We can consider two different limits for $n\to +\infty$: the infinite Dynkin diagram
		\begin{align}
			\begin{tikzpicture}[xscale=1.5,yscale=-0.5]
				\node (A1_0) at (0, 1) {$\bullet$};
				\node (A1_1) at (1, 1) {$\bullet$};
				\node (A1_2) at (2, 1) {$\bullet$};
				\node (A1_3) at (3, 1) {$\bullet$};
				\node (A1_5) at (5, 1) {$ $};
				\path (A1_0) edge [-]node [auto] {$\scriptstyle{}$} (A1_1);
				\path (A1_1) edge [-]node [auto] {$\scriptstyle{}$} (A1_2);
				\path (A1_2) edge [-]node [auto] {$\scriptstyle{}$} (A1_3);
				\path (A1_3) edge [dashed, -]node [auto] {$\scriptstyle{}$} (A1_5);
			\end{tikzpicture} 
		\end{align}
		which gives rise to the infinite--dimensional Lie algebra $\sl(+\infty)$, and the infinite Dynkin diagram
		\begin{align}\label{eq:limit-II}
			\begin{tikzpicture}[xscale=1.5,yscale=-0.5]
				\node (A1_0') at (-2, 1) {$ $};
				\node (A1_0) at (0, 1) {$\bullet$};
				\node (A1_1) at (1, 1) {$\bullet$};
				\node (A1_2) at (2, 1) {$\bullet$};
				\node (A1_3) at (3, 1) {$\bullet$};
				\node (A1_5) at (4, 1) {$\bullet$};
				\node (A1_6) at (6, 1) {$ $};
				\path (A1_0) edge [-]node [auto] {$\scriptstyle{}$} (A1_1);
				\path (A1_1) edge [-]node [auto] {$\scriptstyle{}$} (A1_2);
				\path (A1_2) edge [-]node [auto] {$\scriptstyle{}$} (A1_3);
				\path (A1_3) edge [-]node [auto] {$\scriptstyle{}$} (A1_5);
				\path (A1_5) edge [dashed, -]node [auto] {$\scriptstyle{}$} (A1_6);
				\path (A1_0') edge [dashed, -]node [auto] {$\scriptstyle{}$} (A1_0); 
			\end{tikzpicture} 
		\end{align}
		which corresponds to the infinite dimensional Lie algebra $\sl(\infty)$. 
		One can show that $\sl(+\infty)$ coincides with $\sl(\infty)$ (cf.\ \cite[Section 2]{dimitrov-penkov-99}). 
		The Lie algebra $\sl(\infty)$ admits a Kac--Moody algebra type description with generators $e_i, f_i, h_i$ 
		with $i\in \Z$ and the infinite Cartan matrix $\gcm = (a_{ij})_{i, j\in \Z}$, with $a_{ii}=2$, $a_{ij}=-1$, if 
		$\vert i-j\vert =1$, and $a_{ij}=0$ otherwise. 
		
		In view of the Serre relations \eqref{eq:serre-sl(K)}, there is a canonical embedding 
		$\sl(\infty)\to \sl(\Z)$ given by
		\begin{align}
			e_i \mapsto e_{(i, \, i+1]}\ , \quad 	f_i \mapsto f_{(i, \, i+1]} \ , \quad 	h_i \mapsto h_{(i, \, i+1]}\ . 
		\end{align}
		Moreover, since $\{e_{(i, \, i+1]}, f_{(i,\,  i+1]}, h_{(i,\,  i+1]}\; \vert \; \forall\, i\in \Z\}$
		is a minimal set of generators, this gives a canonical isomorphism $\sl(\infty)\simeq \sl(\Z)$.
	\end{remark}
	
	\bigskip\section{Semigroup Lie algebras}\label{s:sgp-lie}
	
	In this section, we introduce a \textit{semi--local} version of contragredient Lie algebras \cite{feigin-zelevinsky-85},
	as a straightforward generalization of the new presentation of $\sl(\R)$ given in Corollary 
	\ref{cor:slR-presentation}.
	We describe a special class of examples, whose combinatorics depends upon the choice of a
	partial semigroups. 
	
	\subsection{Semi--local continuum Lie algebras}\label{ss:non-loc-cont-lie}
	
	It is clear from the previous section that the Lie algebra $\sl(\R)$ does
	not fit in the usual Kac--Moody framework (nor in the one introduced by 
	Saveliev--Vershik). In particular, it is not generated
	by a local Lie algebra and its Cartan elements satisfy the linearity condition \eqref{eq:Joining-h-finite}. 
	This motivates the following definition and the subsequent construction.
	
	\begin{definition}
		We say that a direct sum of vector spaces $L=L_{-}\oplus L_0\oplus L_{+}$ is a \textit{semi--local} Lie algebra
		if there are bilinear maps $L_{0}\times L_{\pm}\to L_{\pm}$ and $L_+\times L_-\to L$ 
		such that the Jacobi identity holds whenever it make sense. 
	\end{definition}
	
	Let $(\vaa,\cdot)$ be an arbitrary associative $\bsfld$--algebra 
	endowed with six bilinear mappings 
	$\kappa_{\epsilon},\xi_{\epsilon}\colon\vaa\times \vaa\to \vaa$, with 
	$\epsilon= \pm,0$, satisfying 
	\begin{align}
		\kappa_{\pm}(\xi_0(\phi,\psi),\chi)=\drc{\xi_0(\phi,\psi)}\kappa_{\pm}(\phi+\psi,\chi)\label{eq:new-datum-4}
		\quad\text{and}\quad
		[\xi_0(\phi,\psi),\chi]=\drc{\xi_0(\phi,\psi)}[\phi+\psi,\chi]
	\end{align}
	where $\phi,\psi,\chi\in\vaa$ and $\drc{\xi_0(\phi,\psi)}$ is the \textit{characteristic function of the support 
		of $\xi_0$}, \ie $\drc{\xi_0(\phi,\psi)}=1$ if $\xi_0(\phi,\psi)\neq0$ and it is zero otherwise.
	
	Set $L=L_+\oplus L_0 \oplus L_-$, where $L_{\pm}$ is identified with $\vaa$ through
	a linear isomorphism $\xpm{\bullet}\colon\vaa\to L_{\pm}$ and $L_0$ is identified with the quotient $\vaa/K(\xi_0)$, where $K(\xi_0)$ is the span of vectors 
	$\xi_0(\phi,\psi)-\drc{\xi_0(\phi,\psi)}(\phi+\psi)$, $\phi,\psi\in\vaa$, through a linear isomorphism 
	$\xz{\bullet}\colon\vaa/K(\xi_0)\to L_0$. We define a bilinear mapping 
	\begin{align}
		[\cdot,\cdot]\colon L\ten L\to L
	\end{align}
	as follows:
	\begin{align}
		[\xz{\phi},\xz{\psi}] &= \xz{[\phi,\psi]}\ ,
		\label{eq:new-la-1}\\
		[\xz{\phi},\xpm{\psi}] &= \xpm{\kappa_{\pm}(\phi,\psi)}\ ,
		\label{eq:new-la-2}\\
		[\xp{\phi},\xm{\psi}] &= \xp{\xi_+(\phi,\psi)}+\xz{\kappa_{0}(\phi,\psi)}+\xm{\xi_-(\phi,\psi)}\ ,
		\label{eq:new-la-3}
	\end{align}
	where $\phi,\psi\in\vaa$.  The following is straightforward.
	
	\begin{lemma}\label{lem:Jacobi-2}
		The map $[\cdot,\cdot]$ is well--defined. In addition, the vector space $L$ is a semi--local Lie algebra with 
		bracket $[\cdot,\cdot]$ if and only if 
		the map $\kappa_{\epsilon}$ satisfies the relations \eqref{eq:ver-datum-1}, 
		\eqref{eq:ver-datum-2}, and $\kappa_{\epsilon}, \xi_{\epsilon}$ satisfy
		\begin{align}\label{eq:new-datum-3}
			\kappa_{\pm}(\phi,\xi_{\pm}(\psi,\chi))=\xi_{\pm}(\kappa_+(\phi,\psi),\chi)+\xi_{\pm}(\psi,\kappa_-(\phi,\chi))\ ,
		\end{align}
		where $\phi,\psi,\chi\in\vaa$.
	\end{lemma}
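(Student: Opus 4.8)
The plan is to mirror closely the computation carried out in the proof of Lemma~\ref{lem:Jacobi-1}, now with the enlarged bracket \eqref{eq:new-la-3} that has an extra $X_+$ and $X_-$ component. First I would check that $[\cdot,\cdot]$ is well--defined, i.e. that it descends to $L_0 = \vaa/K(\xi_0)$. Since $L_0$ enters the bracket only through $X_0$, I must verify that replacing $X_0(\phi)$ by $X_0(\phi + \zeta)$ with $\zeta \in K(\xi_0)$ changes nothing. It suffices to treat a generating element $\zeta = \xi_0(\phi',\psi') - \drc{\xi_0(\phi',\psi')}(\phi'+\psi')$: in the bracket $[X_0(\zeta), X_\pm(\chi)] = X_\pm(\kappa_\pm(\zeta,\chi))$, and by the first relation in \eqref{eq:new-datum-4} we get $\kappa_\pm(\xi_0(\phi',\psi'),\chi) = \drc{\xi_0(\phi',\psi')}\kappa_\pm(\phi'+\psi',\chi)$, which exactly cancels $\kappa_\pm$ applied to the subtracted term (using bilinearity of $\kappa_\pm$). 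Similarly $[X_0(\zeta),X_0(\chi)] = X_0([\zeta,\chi])$ vanishes in $L_0$ by the second relation in \eqref{eq:new-datum-4}. This establishes well--definedness.

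Next I would run through the Jacobi identities, which — as in Lemma~\ref{lem:Jacobi-1} — only need to be checked on triples for which the bracket is defined, namely $(X_0(\phi),X_0(\psi),X_\pm(\chi))$ and $(X_0(\phi),X_+(\psi),X_-(\chi))$. The first triple is handled exactly as in \eqref{eq:Jacobi-1}: the extra terms in \eqref{eq:new-la-3} play no role because no $L_+\times L_-$ bracket is involved, so this identity is equivalent to \eqref{eq:ver-datum-1} verbatim. For the second triple, expanding $[[X_0(\phi),X_+(\psi)],X_-(\chi)] + [[X_+(\psi),X_-(\chi)],X_0(\phi)] + [[X_-(\chi),X_0(\phi)],X_+(\psi)]$ and using \eqref{eq:new-la-1}--\eqref{eq:new-la-3}, the $X_0$--component yields the relation \eqref{eq:ver-datum-2} (with $\kappa_0$, $\kappa_\pm$), just as before, while the new $X_+$-- and $X_-$--components each yield precisely the condition \eqref{eq:new-datum-3} (one for the $+$ sign, one for the $-$). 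Collecting all three components shows the Jacobi identity for this triple is equivalent to \eqref{eq:ver-datum-2} together with \eqref{eq:new-datum-3}. Combining the two triples gives the stated equivalence.

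The main subtlety I expect is bookkeeping rather than genuine difficulty: one must be careful that the $X_0$--valued outputs are really read in the quotient $\vaa/K(\xi_0)$, so that terms such as $\kappa_0(\xi_+(\cdot,\cdot),\cdot)$ or $[\xi_0(\cdot,\cdot),\cdot]$ — should any appear when re--expanding nested brackets — are interpreted correctly; but since the second triple's expansion only produces one layer of brackets before landing in $L$, no such nested $\xi_0$ terms actually arise, and the well--definedness check above is exactly what guarantees consistency. A second point to watch is sign/index matching between the $+$ and $-$ instances of \eqref{eq:new-datum-3}: the term $[X_-(\xi_-(\phi,\psi)),\dots]$ contributes to the $X_-$--component via $\kappa_-$, and one should confirm the resulting equation is indeed \eqref{eq:new-datum-3} with the lower sign and not some variant — this follows because $\kappa_\pm$ acts on the same slot in both \eqref{eq:new-la-2} and the definition of $\xi_\pm$. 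I would close by noting that, exactly as in Lemma~\ref{lem:Jacobi-1}, the computation is "a simple computation" of the type already displayed, so I would present one of the two new expansions in a displayed \verb|multline| and state that the other is analogous.
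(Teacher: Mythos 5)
Your proposal is correct and follows essentially the same route as the paper: well--definedness is reduced to \eqref{eq:new-datum-4} being equivalent to $[X_0(K(\xi_0)),\cdot]=0$, the triple $(X_0,X_0,X_\pm)$ is handled verbatim as in Lemma~\ref{lem:Jacobi-1}, and the expansion of the triple $(X_0,X_+,X_-)$ is read off componentwise, with the $X_0$--part giving \eqref{eq:ver-datum-2} and the $X_\pm$--parts giving \eqref{eq:new-datum-3}. No substantive difference from the paper's argument.
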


	We shall think of the tuple $(\vaa,\kappa_{\epsilon}, \xi_{\epsilon})$ as a Cartan datum and
	develop its corresponding Kac--Moody theory.
	
	\begin{definition}\label{def:semicontLiealgebra}
		The tuple $(\vaa,\kappa_{\epsilon}, \xi_{\epsilon})$ is a \textit{continuum Cartan datum}
		if $\kappa_{\epsilon}$ satisfies \eqref{eq:ver-datum-1}, \eqref{eq:ver-datum-2}, and $\kappa_{\epsilon}, \xi_{\epsilon}$ satisfy
		\eqref{eq:new-datum-4}, \eqref{eq:new-datum-3}.
		Then, the \textit{(semi--local) continuum Lie algebra} associated to 
		$(\vaa,\kappa_{\epsilon}, \xi_{\epsilon})$ is the Lie algebra
		\begin{align}
			\g(\vaa,\kappa_{\epsilon},\xi_{\epsilon})\coloneqq \wt{\g}(\vaa,\kappa_{\epsilon},\xi_{\epsilon})/\r \ ,
		\end{align}
		where $\wt{\g}(\vaa,\kappa_{\epsilon},\xi_{\epsilon})$ is the Lie algebra freely generated by the
		semi--local Lie algebra $L$ and $\r\subset\wt{\g}(\vaa,\kappa_{\epsilon}, \xi_{\epsilon})$ is the sum of 
		all two--sided ideals having trivial intersection with $L_0$. 
	\end{definition}
	
	Let $\n_{\pm}\subset\g(\vaa,\kappa_{\epsilon},\xi_{\epsilon})$ be the Lie subalgebra generated by 
	$\{\xpm{\phi}\;\vert \; \phi\in\vaa\}$. We shall make use of the following standard result.
	
	\begin{lemma}\label{lem:vanishing}
		Let $S$ be an index set, let $\{X_a\}$ be a collection of elements of $\n_{\pm}$ indexed by $a\in S$, and let
		$X_S=\opspan\{X_a\;\vert\; a\in S\}$. 
		If $[X_S, \xmp{\phi}]\subseteq X_S$ for any $\phi\in\vaa$, then $X_S=0$.
	\end{lemma}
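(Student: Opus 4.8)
The statement is the standard "maximal ideal has trivial Cartan intersection forces vanishing" lemma, adapted to the pre-local setting. The hypothesis is that $X_S \subseteq \n_\pm$ is a subspace stable under the adjoint action of all the opposite root vectors $X_\mp(\phi)$, $\phi \in \vaa$, and the conclusion is $X_S = 0$. The plan is to show that the Lie ideal generated by $X_S$ inside $\g(\vaa,\kappa_\epsilon,\xi_\epsilon)$ has trivial intersection with $L_0$; by the very definition of $\g(\vaa,\kappa_\epsilon,\xi_\epsilon) = \wt{\g}(\vaa,\kappa_\epsilon,\xi_\epsilon)/\r$ as the quotient by the sum $\r$ of all such ideals, this forces $X_S$ to already lie in the image of $\r$, hence $X_S = 0$ in $\g(\vaa,\kappa_\epsilon,\xi_\epsilon)$.

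**Key steps.** First I would consider the $\Z$-gradation $\g = \bigoplus_n \g_n$ inherited from $\wt{\g}$, with $\g_0 = L_0$, $\n_+ = \bigoplus_{n > 0}\g_n$, and $\n_- = \bigoplus_{n<0}\g_n$ (this grading survives the quotient since $\r$ is homogeneous). Say $X_S \subseteq \n_+$; the case $X_S \subseteq \n_-$ is symmetric. Let $I$ be the Lie ideal of $\g$ generated by $X_S$. The essential computation is to understand $I \cap \g_0$. An arbitrary element of $I$ is a sum of iterated brackets $[\cdots[[x, y_1], y_2], \ldots, y_k]$ with $x \in X_S$ and $y_i \in \g$; decomposing each $y_i$ into graded pieces and using that $X_S$ has positive degree, any such bracket landing in degree $0$ must involve at least one negative-degree factor. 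Using the Leibniz rule to move the $\n_-$-factors inward — bracketing $X_S$ against $X_-(\phi)$'s step by step — and invoking the hypothesis $[X_S, X_-(\phi)] \subseteq X_S$ repeatedly, one shows that $I \cap \n_+ = X_S$ (more precisely, $I \cap \g_n \subseteq$ the span generated from $X_S$ by bracketing with $\n_-$ and $\n_+$, but the degree bookkeeping pins $I \cap \g_{\deg X_S} $ down to $X_S$ plus brackets, and crucially $I \cap \g_0$ is obtained from $X_S$ only via repeated bracketing with $\n_-$). The point is: to reach degree $0$ from a positive-degree generator by adjoint actions, at the step just before one hits degree $0$ one is in a subspace of $\n_+$ obtained from $X_S$ by iterated $\sfad(X_-(\phi))$, which by hypothesis is still contained in $X_S$; then one more bracket with some $X_-(\psi)$ would need to land in $\g_0 = L_0$, but the bracket $[X_+(\phi),X_-(\psi)]$ has a $\g_0$-component $X_0(\kappa_0(\phi,\psi))$ together with $\n_\pm$-components (relation \eqref{eq:new-la-3}); one must argue that the $\g_0$-component vanishes on the relevant subspace. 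The cleanest way is: if $X_S \neq 0$, then $I$ is a nonzero homogeneous ideal; if $I \cap \g_0 = 0$ then $I \subseteq \r$, contradicting $X_S \not\subseteq \r$ unless $X_S = 0$; so it suffices to prove $I \cap \g_0 = 0$, and that is exactly what the stability hypothesis $[X_S, X_\mp(\phi)] \subseteq X_S$ delivers, since it says the "descent" never produces a genuinely new Cartan element — any attempt to bracket down into $\g_0$ already happens within $X_S \subseteq \n_+$ and stays there.

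**The main obstacle.** The delicate point is establishing $I \cap \g_0 = 0$ rigorously, i.e. controlling the $\g_0$-component of iterated brackets. One must be careful that in the pre-local setting the bracket $[X_+(\phi), X_-(\psi)]$ is not purely Cartan-valued — it has $X_\pm(\xi_\pm(\phi,\psi))$ terms as well — so the naive "each $\n_-$-bracket lowers degree by one" argument needs the refinement that the $\n_+$-parts keep the element inside (the closure of) $X_S$ under the hypothesis. I would formalize this by an induction on the number of $\n_-$-factors: define $X_S^{(0)} = X_S$ and $X_S^{(j+1)} = \opspan\{[\,\xi, X_-(\phi)] : \xi \in X_S^{(j)},\ \phi \in \vaa\}$ projected to $\n_+$; the hypothesis gives $X_S^{(j)} \subseteq X_S$ for all $j \geq 1$ (the $\n_+$-component), while the $\g_0$- and $\n_-$-components produced along the way must be tracked and shown to vanish when the total degree hits $0$. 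The grading argument then closes: the only way into $\g_0$ is via a single $X_-$-bracket applied to an element of $\g_1 \cap I$, and $\g_1 \cap I$ lies in $X_S$ (if $\deg X_S = 1$) or the $\g_0$-part simply cannot be reached. Once $I \cap \g_0 = 0$ is secured, the definition of $\r$ as the maximal such ideal gives $I \subseteq \r$, and since $\g = \wt{\g}/\r$ has $X_S$ as a subspace of the quotient, $X_S = I \cap (\text{image of } X_S) = 0$.
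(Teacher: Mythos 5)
Your overall strategy --- generate an ideal from $X_S$, show it meets $L_0$ trivially, and invoke the definition of $\g(\vaa,\kappa_\epsilon,\xi_\epsilon)$ as the quotient by the maximal such ideal --- is the same as the paper's. But there is a genuine gap in the execution. You frame the whole argument around a $\Z$--gradation with $\g_0=L_0$ and $\n_\pm$ in positive/negative degrees, and this gradation does not exist in the pre--local setting: precisely because $[X_+(\phi),X_-(\psi)]$ carries the components $X_\pm(\xi_\pm(\phi,\psi))$ of relation \eqref{eq:new-la-3}, the bracket is not homogeneous for the height grading, so statements like ``$\r$ is homogeneous'' and ``the only way into $\g_0$ is via a single $X_-$--bracket applied to $\g_1\cap I$'' have no meaning here. (A grading does exist in the semigroup specialization, but the lemma is stated and used at the pre--local level.) Moreover, the step you flag as the main obstacle --- showing that the $L_0$--components produced along the descent vanish --- is exactly the step you leave unproven (``must be tracked and shown to vanish''), so the argument is not closed.

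The resolution, which is how the paper proceeds, avoids all degree bookkeeping: assume $X_S\subseteq\n_+$ and set $\r_X\coloneqq\sum_{i,j}\sfad(L_+)^i\sfad(L_0)^j X_S$. This is contained in $\n_+$ by construction (both $\sfad(L_+)$ and $\sfad(L_0)$ preserve $\n_+$), and is invariant under $\sfad(L_+)$ and $\sfad(L_0)$. For $\sfad(L_-)$--invariance one commutes $\sfad(x_-)$ inward through the $\sfad(L_+)^i\sfad(L_0)^j$ using the Jacobi identity: the commutators $[\sfad(x_-),\sfad(x_+)]=\sfad([x_-,x_+])$ and $[\sfad(x_-),\sfad(x_0)]=\sfad([x_-,x_0])$ only produce further operators $\sfad(L_0)$, $\sfad(L_+)$, $\sfad(L_-)$, and when $\sfad(L_-)$ finally reaches $X_S$ the hypothesis $[X_S,X_-(\phi)]\subseteq X_S$ applies. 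Note that this hypothesis already asserts that the \emph{entire} bracket --- including any would--be Cartan or $\n_-$ component --- lands back in $X_S\subseteq\n_+$; there is nothing separate to track. Since $L$ generates the Lie algebra, $\r_X$ is an ideal contained in $\n_+$, hence meets $L_0$ trivially for free, and maximality of $\r$ forces $\r_X=0$, so $X_S=0$.
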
 
	
	\begin{proof}
		Assume $X_S\subseteq\n_+$ and
		set $\r_X\coloneqq \sum_{i,j}\sfad(L_+)^i\sfad(L_0)^jX_S$.
		$\r_X$ is a subspace of $\n_+$, which is clearly invariant 
		under $\sfad(L_0)$ and $\sfad(L_+)$. Moreover, since $\sfad(L_-)X_S\subseteq X_S$, 
		one also has $\sfad(L_-)\r_X\subset\r_X$. In particular, whenever $X_S\neq\{0\}$, the subspace 
		$\r_X$ is a non--zero ideal trivially intersecting $L_0$. Therefore, necessarily, $\r_X=0$ and 
		$X_S=0$. The case $X_S\subseteq\n_-$ is similar.
	\end{proof}

	\subsection{Cartan semigroups}
	
	We shall give a combinatorial description of certain semi--local continuum Lie algebras, 
	whose defining relations \eqref{eq:new-la-1}, \eqref{eq:new-la-2}, \eqref{eq:new-la-3} are 
	controlled by a class of partial semigroups, which we refer to as \textit{Cartan semigroups}. 
	
	Henceforth, with a slight abuse of terminology, by a \textit{semigroup} we mean a positive, commutative, partial semigroup with a maximal cancellation law (cf. Appendix~\ref{app:sgp}).
	
	Let $\sgp$ be a semigroup with commutative product $\sgpp$ and cancellation law $\sgpm$.
	Let $\kappa\colon \sgp\times\sgp\to\bsfld$ be a function such that
	\begin{align}
		\kappa(\alpha\sgpp\beta,\gamma)=\drc{\alpha\sgpp\beta}\left(\kappa(\alpha,\gamma)+\kappa(\beta,
		\gamma)\right)\label{eq:sgp-datum-1}\ ,
	\end{align}
	where $\drc{\alpha\sgpp\beta}$ is the \textit{characteristic function\footnote{This means that $\drc{\alpha\sgpp\beta}$ takes value one if $(\alpha, \beta)\in \dsgp_{\sgpp}$, and it takes value zero otherwise.} of $\dsgp_{\sgpp}$} and, by convention, $\kappa(\alpha\sgpp\beta,\gamma)=0=\kappa(\alpha,\beta\sgpp\gamma)$ whenever $\alpha\sgpp\beta$ and $\beta\sgpp\gamma$ are not defined.
	Set $L^{\sgp}=L^{\sgp}_+\oplus L^{\sgp}_0 \oplus L^{\sgp}_-$,
	where
	\begin{align}
		L^{\sgp}_{\pm}\coloneqq \bigoplus_{\alpha\in\sgp}\bsfld\cdot \xpm{\ia}
		\quad\text{and}\quad
		L^{\sgp}_0\coloneqq \left(\bigoplus_{\alpha\in\sgp}\bsfld\cdot \xz{\ia}\right)/N^{\sgp}\ ,
	\end{align}
	and $N^{\sgp}$ is the subspace spanned by the elements of the form
	\begin{align}
		\xz{\ia\sgpp\ib}-\drc{\alpha\sgpp\beta}\left(\xz{\ia}+\xz{\ib}\right)\ ,
	\end{align}
	where we assume that $\xpm{\alpha\sgpp\beta}=0=\xz{\alpha\sgpp\beta}$ if $(\alpha,\beta)\not\in\dsgp_{\sgpp}$. By a slight abuse of notation, we will denote the class of $\xz{\ia}$ in $L_0^{\sgp}$ by the same symbol.
	Given two functions $\xi_{\pm}\colon \sgp\times\sgp\to\bsfld$, we define a bilinear map $[\cdot,\cdot]\colon L^{\sgp}\times L^{\sgp}\to L^{\sgp}$ (whose dependence by $\xi_{\pm}$ 
	is omitted) by
	\begin{align}
		[\xz{\ia},\xz{\ib}]=& 0\ ,\label{eq:sgp-lie-2}\\
		[\xz{\ia},\xpm{\ib}]=& \pm\kappa(\alpha,\beta)\xpm{\ib}\ ,\label{eq:sgp-lie-3}\\
		[\xp{\ia},\xm{\ib}]=& \drc{\alpha,\beta}\xz{\ia}+
		\xi_+(\alpha,\beta)\xp{\ia\sgpm\ib}-\xi_-(\beta,\alpha)\xm{\ib\sgpm\ia}\ ,\label{eq:sgp-lie-4}
	\end{align}
	where we assume that $\xpm{\ia\sgpm\ib}=0=\xz{\ia\sgpm\ib}$ if $(\alpha,\beta)\not\in\dsgp_{\sgpm}$.
	Note that the condition \eqref{eq:sgp-datum-1} is equivalent to require $[N^{\sgp}, L^{\sgp}]=0$. 
	Therefore, the map $[\cdot,\cdot]$ is well--defined. The following is straightforward 
	(cf. Lemmas ~\ref{lem:Jacobi-1} and \ref{lem:Jacobi-2}).
	
	\begin{lemma}
		The vector space $L^{\sgp}$ is a semi--local Lie algebra with bracket $[\cdot,\cdot]$ if
		and only if
		\begin{align}\label{eq:sgp-datum-2}
			\xi_{\pm}(\beta,\gamma)\kappa(\alpha,\beta\sgpm\gamma)&=\drc{\beta\sgpm\gamma}\,
			\xi_{\pm}(\beta,\gamma)\left(\kappa(\alpha,\beta)
			-\kappa(\alpha,\gamma)\right)
		\end{align}
		where we assume that $\kappa(\alpha,\beta\sgpm\gamma)=0$ if $(\beta,\gamma)\not\in\dsgp_{\sgpm}$.
	\end{lemma}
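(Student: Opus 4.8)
The plan is to follow the pattern of Lemmas~\ref{lem:Jacobi-1} and~\ref{lem:Jacobi-2}. Well--definedness of $[\cdot,\cdot]$ has already been observed (relation \eqref{eq:sgp-datum-1} is equivalent to $[N^{\sgp},L^{\sgp}]=0$), and antisymmetry is built into \eqref{eq:sgp-lie-2}--\eqref{eq:sgp-lie-4}; so the content of the statement lies entirely in the Jacobi identity. Since $L^{\sgp}$ is only required to be \emph{pre--local}, the Jacobi identity must be checked on those triples for which all three iterated brackets are defined, that is --- up to reordering, which does not change the content --- on triples of the form $(X_0(\alpha),X_0(\beta),X_{\pm}(\gamma))$ and $(X_0(\alpha),X_+(\beta),X_-(\gamma))$ (the all--$L_0^{\sgp}$ case being trivial).

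For the first type, I would expand $[[X_0(\alpha),X_0(\beta)],X_{\pm}(\gamma)]$ and its two cyclic permutations using \eqref{eq:sgp-lie-2}--\eqref{eq:sgp-lie-3}: the first bracket vanishes by \eqref{eq:sgp-lie-2}, and the remaining two terms differ only by an overall sign and cancel, precisely because $\kappa$ takes values in the commutative ring $\bsfld$. Hence this family imposes no condition --- this is the point at which the present setting is simpler than the Saveliev--Vershik one of \eqref{eq:Jacobi-1}, where $\kappa_0$ took values in a possibly noncommutative algebra $\vaa$.

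For the second type, I would expand $[[X_0(\alpha),X_+(\beta)],X_-(\gamma)]+[[X_+(\beta),X_-(\gamma)],X_0(\alpha)]+[[X_-(\gamma),X_0(\alpha)],X_+(\beta)]$ using \eqref{eq:sgp-lie-2}--\eqref{eq:sgp-lie-4}, and then read off the coefficients of $X_0(\beta)$, of $X_+(\beta\sgpm\gamma)$, and of $X_-(\gamma\sgpm\beta)$, which live in the three distinct summands of $L^{\sgp}=L^{\sgp}_+\oplus L^{\sgp}_0\oplus L^{\sgp}_-$ and may therefore be equated to zero separately. The coefficient of $X_0(\beta)$ comes out as $\drc{\beta,\gamma}\bigl(\kappa(\alpha,\beta)-\kappa(\alpha,\gamma)\bigr)$, which vanishes identically since $\drc{\beta,\gamma}\neq0$ forces $\beta=\gamma$. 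The coefficient of $X_+(\beta\sgpm\gamma)$ works out to $\xi_+(\beta,\gamma)\bigl(\kappa(\alpha,\beta)-\kappa(\alpha,\gamma)-\kappa(\alpha,\beta\sgpm\gamma)\bigr)$, and that of $X_-(\gamma\sgpm\beta)$ to $-\,\xi_-(\gamma,\beta)\bigl(\kappa(\alpha,\gamma\sgpm\beta)-\kappa(\alpha,\gamma)+\kappa(\alpha,\beta)\bigr)$; their simultaneous vanishing for all $\alpha,\beta,\gamma$ is exactly relation \eqref{eq:sgp-datum-2}, written for the pair $(\beta,\gamma)$ in the first case and for the pair $(\gamma,\beta)$ in the second, and conversely \eqref{eq:sgp-datum-2} makes both coefficients zero. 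Along the way one uses the standing conventions $X_{\epsilon}(\beta\sgpm\gamma)=0$ and $\kappa(\alpha,\beta\sgpm\gamma)=0$ when $\beta\sgpm\gamma$ is undefined: this is precisely why the factor $\drc{\beta\sgpm\gamma}$ appears on the right of \eqref{eq:sgp-datum-2}, since when $\beta\sgpm\gamma$ is undefined the corresponding Jacobi term is absent and \eqref{eq:sgp-datum-2} degenerates to the tautology $0=0$.

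The computation is routine; the only place where care is needed --- and essentially the only way to get it wrong --- is in tracking the signs produced by the antisymmetrizations $[X_{\pm}(\,\cdot\,),X_0(\alpha)]=-[X_0(\alpha),X_{\pm}(\,\cdot\,)]$ and $[X_-(\gamma),X_+(\beta)]=-[X_+(\beta),X_-(\gamma)]$, together with keeping the "undefined $\Rightarrow 0$" conventions for $\sgpp$ and $\sgpm$ consistent throughout. No use is made of the positivity axioms of Definition~\ref{def:positive-semigroup}: the statement holds for an arbitrary commutative partial semigroup equipped with a maximal cancellation law and a function $\kappa$ satisfying \eqref{eq:sgp-datum-1}.
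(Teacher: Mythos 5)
Your proposal is correct and follows essentially the same route as the paper, which simply notes that the proof is identical to those of Lemmas~\ref{lem:Jacobi-1} and~\ref{lem:Jacobi-2}: expand the Jacobi identity on the two admissible types of triples and read off the coefficients of $X_0(\beta)$, $X_+(\beta\sgpm\gamma)$ and $X_-(\gamma\sgpm\beta)$ in the separate summands, obtaining exactly \eqref{eq:sgp-datum-2}. Your sign bookkeeping and the handling of the ``undefined $\Rightarrow 0$'' conventions match the paper's computation.
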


	\begin{remark}
		Note that if $\kappa\colon\sgp\times\sgp\to\bsfld$ is symmetric or satisfies
		\begin{align}\label{eq:kappa-right-dec}
			\kappa(\alpha,\beta\sgpp\gamma)=\drc{\beta\sgpp\gamma}\left(\kappa(\alpha,\beta)+\kappa(\alpha,\gamma)\right)
		\end{align}
		then equation \eqref{eq:sgp-datum-2} is automatically satisfied for any choice of $\xi_{\pm}$. 
	\end{remark}
	
	We shall regard the tuple $(\sgp,\kappa,\xi_{\pm})$ as a generalization of the usual Cartan datum.
	
	\begin{definition}\label{def:Cartan-sgp}
		A \textit{Cartan semigroup} is a tuple $\csgp=(\sgp,\kappa,\xi_{\pm})$, where $\sgp$ 
		is a semigroup, $\kappa\colon \sgp\times\sgp\to\bsfld$ is a function satisfying \eqref{eq:sgp-datum-1} and \eqref{eq:kappa-right-dec}, and $\xi_{\pm}\colon \sgp\times\sgp\to\bsfld$ are two arbitrary functions. We denote by $\wt{\g}(\csgp)$ 
		the Lie algebra freely generated by $L^{\sgp}$. 
	\end{definition}

	\subsection{Semigroup Lie algebras}\label{ss:sgp-lie}
	We denote by $\sgprt{\alpha}$ the element of the standard basis of $\Z^{\sgp}$ for $\alpha\in\sgp$. Then, we set $\rtl^{\sgp}=\Z^{\sgp}/\sim$, where
	$\sim$ is the relation $\sgprt{\alpha\sgpp\beta}=\delta_{\alpha\sgpp\beta}(\sgprt{\alpha}+\sgprt{\beta})$, and
	\begin{align}
		\rtl^{\sgp}_+\coloneqq\mathsf{span}_{\Z_{\geqslant0}}\{\sgprt{\alpha}\;\vert\;\alpha\in\sgp\}\subset\rtl^{\sgp}\ .
	\end{align}
	Set $\rtl^{\sgp}_-\coloneqq-\rtl^{\sgp}_+$, so that $\rtl^{\sgp}=\rtl_+^{\sgp}\oplus\rtl_-^{\sgp}$. For 
	$\lambda,\mu\in\rtl^{\sgp}$, we say that $\mu\preccurlyeq\lambda$ if and only if $\lambda-\mu\in\rtl^{\sgp}_+$. The following is standard.

	\begin{proposition}\label{prop:triang-dec-1}
		\hfill
		\begin{enumerate}
			\item As vector spaces, $\wt{\g}(\csgp)=\wt{\n}_+\oplus L_0^{\sgp}\oplus\wt{\n}_-$, where
			$\wt{\n}_{\pm}$ is the subalgebra  generated by the elements $\xpm{\ia}$, $\alpha\in\sgp$. Moreover,
			$\wt{\n}_{\pm}$ is freely generated.
			\item There is a natural $\rtl^{\sgp}$--gradation on $\wt{\g}(\csgp)$ given by $\deg(\xpm{\ia})=\pm\sgprt{\alpha}$ and $\deg(\xz{\ia})=0$. In particular, 
			\begin{align}\label{eq:triang-dec-sgp}
				\wt{\g}(\csgp)
				=\left(\bigoplus_{\mu\in\rtl_+^{\sgp}\setminus\{0\}}\wt{\g}_{\mu}\right)
				\oplus L_0^{\sgp}\oplus\left(\bigoplus_{\mu\in\rtl_+^{\sgp}\setminus\{0\}}\wt{\g}_{-\mu}\right)
			\end{align}
			and $\wt{\g}_{\pm\mu}\subseteq\wt{\n}_{\pm}$.
		\end{enumerate}
	\end{proposition}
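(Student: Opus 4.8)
This is a standard argument, following the construction of Kac--Moody and Borcherds--Kac--Moody algebras \cite{kac-90, borcherds-88}; the only genuinely new point is the ``non--local'' term $X_{\pm}(\alpha\sgpm\beta)$ in \eqref{eq:sgp-lie-4} (already visible in \eqref{eq:non-local}), which obstructs the usual proof that the triangular pieces close under bracket. First I would record the $\rtl^{\sgp}$--grading of part~(2): set $\deg X_{\varepsilon}(\alpha)=\varepsilon\sgprt{\alpha}$ for $\varepsilon=\pm,0$. Since $\sgpm$ is a maximal cancellation law, $(\alpha\sgpm\beta)\sgpp\beta=\alpha$ whenever the left side is defined, so $\sgprt{\alpha\sgpm\beta}=\sgprt{\alpha}-\sgprt{\beta}$ in $\rtl^{\sgp}$; this, together with \eqref{eq:sgp-datum-1}, makes each of \eqref{eq:sgp-lie-2}--\eqref{eq:sgp-lie-4} as well as the generators of $N^{\sgp}$ (homogeneous of degree $0$) $\rtl^{\sgp}$--homogeneous, so $\wt{\g}(\csgp,\kappa,\xi_{\pm})=\bigoplus_{\mu\in\rtl^{\sgp}}\wt{\g}_{\mu}$ with $\wt{\g}_{0}\supseteq L_0^{\sgp}$ and $\wt{\n}_{\pm}\subseteq\bigoplus_{\pm\mu\in\rtl_+^{\sgp}\setminus\{0\}}\wt{\g}_{\mu}$ (positivity of $\csgp$ ensures a nonempty sum of the $\sgprt{\alpha}$ is never $0$, so these index sets are pairwise disjoint).

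The main device is an explicit module. Let $V=T(L_-^{\sgp})$ be the tensor algebra on $L_-^{\sgp}=\bigoplus_{\alpha}\bsfld\cdot X_-(\alpha)$, and define: $\rho(X_-(\alpha))$ to be left multiplication by $X_-(\alpha)$; $\rho(X_0(\alpha))$ to act on $X_-(\beta_1)\otimes\cdots\otimes X_-(\beta_s)$ by the scalar $-\sum_k\kappa(\alpha,\beta_k)$; and $\rho(X_+(\alpha))$ by downward recursion on tensor degree so that $\rho(X_+(\alpha))(1)=0$ and $[\rho(X_+(\alpha)),\rho(X_-(\beta))]$ acts as $\rho$ applied to the right--hand side of \eqref{eq:sgp-lie-4}. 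One then checks that $\rho$ respects the pre--local structure of $L^{\sgp}$: that it factors through $N^{\sgp}$ follows from \eqref{eq:sgp-datum-1}; \eqref{eq:sgp-lie-2}, \eqref{eq:sgp-lie-3} with $\varepsilon=-$, and \eqref{eq:sgp-lie-4} are immediate (the last by construction); and \eqref{eq:sgp-lie-3} with $\varepsilon=+$, i.e.\ $[\rho(X_0(\alpha)),\rho(X_+(\beta))]=\kappa(\alpha,\beta)\rho(X_+(\beta))$, follows by induction on tensor degree using \eqref{eq:sgp-datum-1}, \eqref{eq:kappa-right-dec} and their consequence $\kappa(\alpha,\beta\sgpm\gamma)=\kappa(\alpha,\beta)-\kappa(\alpha,\gamma)$. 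Since $\wt{\g}(\csgp,\kappa,\xi_{\pm})$ is freely generated by the pre--local Lie algebra $L^{\sgp}$, this $\rho$ extends to $\rho\colon\wt{\g}(\csgp,\kappa,\xi_{\pm})\to\End(V)$, and a symmetric construction gives $\sigma\colon\wt{\g}(\csgp,\kappa,\xi_{\pm})\to\End(T(L_+^{\sgp}))$.

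From here part~(1) follows formally. Because $T(L_-^{\sgp})=U(\mathfrak{F}(L_-^{\sgp}))$ and $\rho(X_-(\alpha))$ is left multiplication, the composite $\mathfrak{F}(L_-^{\sgp})\twoheadrightarrow\wt{\n}_-\xrightarrow{\rho}\End(V)$ is the left regular representation of $\mathfrak{F}(L_-^{\sgp})$ followed by the canonical embedding $\mathfrak{F}(L_-^{\sgp})\hookrightarrow T(L_-^{\sgp})$, hence injective; thus $\mathfrak{F}(L_-^{\sgp})\xrightarrow{\sim}\wt{\n}_-$ and $\wt{\n}_-$ is freely generated (similarly $\wt{\n}_+$, via $\sigma$). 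For the triangular decomposition, I would show $\mathfrak{m}\coloneqq\wt{\n}_++L_0^{\sgp}+\wt{\n}_-$ is a Lie subalgebra: using $[L_0^{\sgp},L_0^{\sgp}]=0$, that $\sfad X_0(\alpha)$ is a derivation preserving $\wt{\n}_{\pm}$ by \eqref{eq:sgp-lie-3}, and that $\wt{\n}_{\pm}$ are subalgebras, this reduces to $[\wt{\n}_+,\wt{\n}_-]\subseteq\mathfrak{m}$, which I would prove by simultaneous induction on Lie--word lengths via $[X_-(\alpha),[X_+(\gamma),x']]=[[X_-(\alpha),X_+(\gamma)],x']+[X_+(\gamma),[X_-(\alpha),x']]$ and its mirror, the base case being \eqref{eq:sgp-lie-4}; since $\mathfrak{m}$ contains all generators, $\mathfrak{m}=\wt{\g}(\csgp,\kappa,\xi_{\pm})$. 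Directness then comes from the module: if $u_++u_0+u_-=0$ with $u_{\pm}\in\wt{\n}_{\pm}$ and $u_0\in L_0^{\sgp}$, applying $\rho$ to $1\in V$ annihilates $\rho(u_+)$ and $\rho(u_0)$ (both kill $1$), so $\rho(u_-)(1)=0$; but $\rho(u_-)(1)$ is the image of $u_-$ under $\wt{\n}_-\cong\mathfrak{F}(L_-^{\sgp})\hookrightarrow T(L_-^{\sgp})$, so $u_-=0$, and applying $\sigma$ to $1$ then gives $u_+=0$, whence $u_0=0$. Combined with the grading of the first paragraph, this yields \eqref{eq:triang-dec-sgp} with $\wt{\g}_0=L_0^{\sgp}$ and $\wt{\g}_{\pm\mu}\subseteq\wt{\n}_{\pm}$.

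The hard part will be the inclusion $[\wt{\n}_+,\wt{\n}_-]\subseteq\mathfrak{m}$: in the Kac--Moody setting the analogous induction closes immediately because $[e_i,f_j]\in\h$ never reintroduces a lowering term, whereas \eqref{eq:sgp-lie-4} does, so the induction must be arranged so that the extra $X_-(\beta\sgpm\alpha)$--summand appearing after a Jacobi expansion has strictly smaller word length, and checking that this decrement is genuine is exactly where the positivity and cancellation axioms of $\csgp$ enter. Everything else --- the module computations, the freeness, the directness argument --- is routine once $V$ and $\sigma$ are available.
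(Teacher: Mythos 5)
The paper gives no argument for this proposition beyond the phrase ``The following is standard'', and your proof is exactly the standard one (Kac's Theorem~1.2, Borcherds) correctly adapted to the pre--local setting, so it matches the intended route. Two remarks on the points you yourself flag. First, the induction for $[\wt{\n}_+,\wt{\n}_-]\subseteq\wt{\n}_++L_0^{\sgp}+\wt{\n}_-$ is less delicate than you fear: take as inductive hypothesis that a Lie word $x$ of length $p$ in the $X_+(\gamma)$ and a Lie word $y$ of length $q$ in the $X_-(\gamma)$ satisfy $[x,y]\in\wt{\n}_+^{(\leq p)}+L_0^{\sgp}+\wt{\n}_-^{(\leq q)}$, where $\wt{\n}_{\pm}^{(\leq k)}$ is the span of words of length at most $k$; the extra summands $X_{\pm}(\alpha\sgpm\beta)$ coming from \eqref{eq:sgp-lie-4} are words of length one, $\mathsf{ad}(L_0^{\sgp})$ preserves word length, and the Jacobi expansion then closes by induction on $p+q$ exactly as in the local case, with no need to track a genuine decrement. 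Second, taking the module with $\rho(X_0(\alpha))(1)=0$ establishes directness of the sum of the \emph{images} of $\wt{\n}_{\pm}$ and $L_0^{\sgp}$ in $\wt{\g}(\csgp,\kappa,\xi_{\pm})$, but not injectivity of $L_0^{\sgp}\to\wt{\g}(\csgp,\kappa,\xi_{\pm})$, which Definition~\ref{def:semigroupLiealgebra} implicitly relies on; to get it, run the same construction with $\rho_{\lambda}(X_0(\alpha))$ acting on a monomial $X_-(\beta_1)\otimes\cdots\otimes X_-(\beta_s)$ by $\lambda(X_0(\alpha))-\sum_k\kappa(\alpha,\beta_k)$ for an arbitrary linear functional $\lambda$ on $L_0^{\sgp}$, exactly as in Kac.
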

	
	\begin{defin}\label{def:semigroupLiealgebra}
		The \textit{semigroup Lie algebra with Cartan datum} $\csgp$ 
		is the Lie algebra 
		\begin{align}
			\g(\csgp)\coloneqq \wt{\g}(\csgp, \kappa,\xi_{\pm})/\r\ ,
		\end{align}
		where $\r$ is the sum of all two--sided $\rtl^{\sgp}$--graded ideals in $\wt{\g}(\csgp)$ 
		having trivial intersection with $L^{\sgp}_0$. \hfill$\oslash$
	\end{defin}
	
	In particular, it follows immediately from Proposition~\ref{prop:triang-dec-1} that $\g(\csgp)$ inherits the triangular decomposition $\g(\csgp)=\n_+\oplus L_0^{\sgp}\oplus\n_-$, where $\n_{\pm}\subset\g(\csgp)$ denotes the subalgebra generated by $\xpm{\ia}$, $\alpha\in\sgp$, and the $\rtl^{\sgp}$--gradation
	\begin{align}
		{\g}(\csgp)=\left(\bigoplus_{\mu\in\rtl_+^{\sgp}\setminus\{0\}}{\g}_{\mu}\right)
		\oplus L_0^{\sgp}\oplus\left(\bigoplus_{\mu\in\rtl_+^{\sgp}\setminus\{0\}}{\g}_{-\mu}\right)
	\end{align}
	where $\g_{\pm\mu}\subseteq\n_{\pm}$. 
	
	\begin{definition}
		We call \textit{root} an element $\mu\in \sfQ^{\sgp}\setminus\{0\}$ such that $\g_\mu\neq 0$. We say that a root $\mu$ is \textit{positive} (resp.\ \textit{negative}) if $\mu\succ0$ (resp.\ $\mu\prec0$). The set of roots (resp. positive, negative roots) is denoted by $\rts^{\sgp}$ (resp. $\rts^{\sgp}_+$, $\rts^{\sgp}_-$).
	\end{definition}
	
	\begin{remark}
		Let $\csgp$ be a Cartan semigroup and let $f\colon \sgp \to (\bsfld, +, -)$ be a homomorphism of partial semigroups. Then, we may define a gradation with respect to $f$ by setting
		\begin{align}
			\deg \xpm{\ia} \coloneqq \pm f(\ia) \quad\text{and}\quad \deg \xz{\ia} \coloneqq 0\ .
		\end{align}  
	\end{remark}	
	
	\begin{remark}
		As expected, a semigroup Lie algebra is a special cases of a semi--local 
		continuum Lie algebra as in Definition~\ref{def:semicontLiealgebra}. 
		Namely, let $\csgp$ be a Cartan semigroup,
		$\vaa_{\csgp}=\bsfld[\sgp]$ the algebra of regular functions over the set $\sgp$, and 
		$\cf{\alpha}\in\vaa_{\csgp}$ the characteristic function of 
		$\alpha\in\sgp$. For any $\alpha,\beta\in\sgp$, define
		\begin{align}
			\kappa_{\csgp,0}(\cf{\alpha},\cf{\beta})=\drc{\alpha,\beta}\cf{\alpha}\quad\text{and}\quad
			\kappa_{\csgp,\pm}(\cf{\alpha},\cf{\beta})=\pm\kappa(\alpha,\beta)\cf{\beta}
		\end{align}
		and
		\begin{align}
			\xi_{\csgp,0}(\cf{\alpha},\cf{\beta})&=\drc{\alpha\sgpp\beta}\cf{\alpha\sgpp\beta}\\
			\xi_{\csgp,+}(\cf{\alpha},\cf{\beta})&=\drc{\alpha\sgpm\beta}\xi_+(\alpha,\beta)\cf{\alpha\sgpm\beta}\\
			\xi_{\csgp,-}(\cf{\alpha},\cf{\beta})&=-\drc{\beta\sgpm\alpha}\xi_-(\beta,\alpha)\cf{\beta\sgpm\alpha}
		\end{align}
		where we assume that $\cf{\alpha\odot\beta}=0$ whenever $(\alpha,\beta)\not\in\dsgp_{\odot}$,
		$\odot=\sgpp,\sgpm$. Then, the assignment $\xpm{\ia}\to X_{\pm}(\cf{\alpha})$ and $\xz{\ia}\to X_{0}(\cf{\alpha})$ gives a
		Lie algebra isomorphism
		\begin{align}
			\g(\csgp)\simeq\wt{\g}(\vaa_{\csgp},\kappa_{\csgp,\epsilon},\xi_{\csgp,\epsilon})/\wt{\r}
		\end{align}
		where $\wt{\r}$ is the sum of all two--sided graded ideals in $\wt{\g}(\vaa_{\csgp},\kappa_{\csgp,\epsilon},\xi_{\csgp,\epsilon})$
		having trivial intersection with its Cartan subalgebra. In particular, $\g(\csgp)$ 
		is a graded semi--local continuum Lie algebra.\hspace{-0.5cm} 
	\end{remark}
	
	\subsection{Derived Kac--Moody algebras and semigroups}
	Derived Kac--Moody algebras are easily realized as degenerate examples of semigroup 
	Lie algebras. We use the notation from Section~\ref{s:km-vkm}. Let $\sgp$ be the trivial semigroup 
	with underlying set $\Pi=\{\alpha_i\,|\,i\in\bfI\}$ and $\dsgp_{\sgpp}=\emptyset=\dsgp_{\sgpm}$.
	Then, $\csgp=(\sgp,\kappa_{\gcm},0)$, with $\kappa_{\gcm}(\alpha_i,\alpha_j)=a_{ij}$, $\xi(i,j)=0$, is a 
	Cartan semigroup and the assignment 
	\begin{align}
		(\xp{\ia_i}, \xz{\ia_i}, \xm{\ia_i})\mapsto(e_i,h_i,f_i)
	\end{align} 
	defines a Lie algebra isomorphism $\g(\csgp)\simeq\g(\gcm)'$.
	
	Symmetric Borcherds--Kac--Moody algebras can also be described in terms of a more 
	interesting semigroup structure. More precisely, we show in Proposition~\ref{prop:bkm-sgp} and Theorem~\ref{thm:main}
	that Borcherds--Kac--Moody algebras corresponding to quivers 
	with at most one loop on each vertex and at most two arrows between any two vertices, can 
	be realised as Lie subalgebras of semigroup Lie algebras $\g(\sgp)$ for some non--trivial
	semigroups of topological origin.
	
	\bigskip\section{Semigroup Serre relations}\label{s:sgp-serre-relations}
	
	In this section, we study the necessary and sufficient conditions for the occurrence of
	distinguished quadratic relations in a semigroup Lie algebra $\g(\csgp)$. 
	Such relations, which we refer to as \emph{semigroup Serre relations}, are clearly inspired 
	by the case of $\sl(\R)$ as described in Corollary~\ref{cor:slR-presentation}. 
	
	\subsection{Serre relations}\label{ss:sgp-serre-rel}
	
	In analogy with the case of Kac--Moody algebras and $\sl(\R)$, it is desirable to have in $\g(\csgp)$ 
	certain \textit{quadratic Serre relations} of the form
	\begin{align}
		[\xpm{\ia}, \xpm{\ib}]=\mu_{\pm}(\alpha,\beta)\cdot \xpm{\ia\sgpp\ib}\ ,
	\end{align}
	for some $\mu_{\pm}\colon\sgp\times\sgp\to\bsfld$. The next result describes the necessary and
	sufficient conditions for such relations to hold. To this end, we shall define recursively the set $\sgp_{\leqslant\alpha}\subseteq\sgp$ 
	of \textit{partitions of $\alpha\in\sgp$} as follows. We set 
	\begin{align}
		\sgp_{\leqslant\alpha}^{(0)} & \coloneqq \{\alpha\} \ ,\\[2pt]
		\sgp_{\leqslant\alpha}^{(n)} &\coloneqq  
		\{\beta\sgpm\gamma\;\vert \;\gamma\in\sgp,\beta\in\sgp_{\leqslant\alpha}^{(n-1)}\}\quad\text{ for $n\geqslant 1$}\ ,
	\end{align}
	and  
	$\displaystyle
	\sgp_{\leqslant\alpha}\coloneqq \bigcup_{n\geqslant0}\sgp_{\leqslant\alpha}^{(n)}$. More precisely, $\alpha'\in \sgp_{\leqslant\alpha}$ if and only if there exist a sequence 
	\begin{align}
		\alpha=\alpha_1, \alpha_2, \ldots,  \alpha_{n}, \alpha_{n+1}=\alpha'
	\end{align} 
	such that $(\alpha_i, \alpha_{i+1})\in \dsgp_{\sgpm}$ for any $1\leqslant i\leqslant n$, so that
	\begin{align}\label{eq:alpha-partition}
		\alpha=(\beta_1\sgpp(\beta_2\sgpp\cdots\sgpp(\beta_n\sgpp\alpha')\cdots)
	\end{align} 
	where $\beta_i=\alpha_i\sgpm\alpha_{i+1}$.
	We shall call such a sequence an \textit{partition of $\alpha$ at $\alpha'$} and we write $\alpha'\leqslant\alpha$ if $\alpha'\in\sgp_{\leqslant\alpha}$. Finally, we denote by $\csgp_{\leqslant\alpha}^{\pm}$ the subset of all elements $\alpha'$ in  $\sgp_{\leqslant\alpha}$ for which there exists a partition $\alpha=\alpha_1, \alpha_2,\ldots, \alpha_{n}, \alpha_{n+1}=\alpha'$ such that, if $n>1$ we have 
	$\xi_{\pm}(\alpha_i, \alpha_{i+1})\neq 0$ for any $1\leqslant i\leqslant n$. 
	
	\begin{proposition}\label{pr:sgp-serre-general}
		Let $\mu_{\pm}\colon\sgp\times\sgp\to\bsfld$ be two functions and $\alpha,\beta\in\sgp$, $\alpha\neq\beta$.
		Then,
		\begin{align}\label{eq:sgp-serre}
			[\xpm{\ia}, \xpm{\ib}]=\mu_{\pm}(\alpha,\beta)\, \xpm{\ia\sgpp\ib}
		\end{align}
		holds in $\g(\csgp)$ if and only if the following relations hold.
		\begin{enumerate}\itemsep0.2cm
			\item For any $\oa\in \csgp_{\leqslant\alpha}^{\pm}$, $\ob\in \csgp_{\leqslant\beta}^{\pm}$,
			\begin{align}
				\label{eq:sgp-serre-general-1} 
				\xi_{\pm}(\oa\sgpp\ob,\oa)=\mu_{\mp}(\oa,\ob)=-\xi_{\pm}(\oa\sgpp\ob,\ob) \ ,
			\end{align}
			
			\begin{align}
				\label{eq:sgp-serre-general-2} 
				\kappa(\oa,\ob)=&
				\xi_+(\oa\sgpp\ob,\oa)\xi_-(\oa\sgpp\ob,\oa)
				-(\drc{\oa\sgpm\ob}+\drc{\ob\sgpm\oa})\xi_{+}(\ob,\oa)\xi_-(\ob,\oa) 
				=\kappa(\ob,\oa) \ .
			\end{align}
			\item For any $\oa\in \csgp_{\leqslant\alpha}^{\pm}$, $\ob\in \csgp_{\leqslant\beta}^{\pm}$, $\oc\in\sgp$,
			\begin{multline}\label{eq:sgp-serre-general-3}
				\drc{\oc\sgpm(\oa\sgpp\ob)}
				\xi_{\pm}(\oa\sgpp\ob,\oa)\xi_{\pm}(\oc,\oa\sgpp\ob)=\\[3pt]
				=\drc{(\oc\sgpm\oa)\sgpm\ob}\xi_{\pm}(\oc,\oa)\xi_{\pm}(\oc\sgpm\oa,\ob)
				-\drc{(\oc\sgpm\ob)\sgpm\oa}\xi_{\pm}(\oc,\ob)\xi_{\pm}(\oc\sgpm\ob,\oa) \ .
			\end{multline}
			\item For any $\oa\in \csgp_{\leqslant\alpha}^{\pm}$, $\ob\in \csgp_{\leqslant\beta}^{\pm}$, 
			$\oc\in\sgp$, $\oc\neq\oa,\ob$,
			\begin{multline}\label{eq:sgp-serre-general-4}
				\xi_{\pm}(\oa,\oc)\xi_{\mp}((\oa\sgpm\oc)\sgpp\ob,\ob)
				-\xi_{\pm}(\ob,\oc)\xi_{\mp}(\oa\sgpp(\ob\sgpm\oc),\oa)=\\[3pt]
				\drc{\ob\sgpm(\oc\sgpm\oa)}\xi_{\mp}(\oc,\oa)\xi_{\pm}(\ob,\oc\sgpm\oa)
				-\drc{\oa\sgpm(\oc\sgpm\ob)}\xi_{\mp}(\oc,\ob)\xi_{\pm}(\oa,\oc\sgpm\ob)\\[3pt]
				\phantom{\drc{\ob\sgpm(\oc\sgpm\oa)}\xi_{\mp}(\oc,\oa)\xi_{\pm}(\ob\sgpm\oc,\oa)\qquad}
				-\drc{(\oa\sgpp\ob)\sgpm\oc}\xi_{\mp}(\oa\sgpp\ob,\oa)\xi_{\pm}(\oa\sgpp\ob,\oc)
				\ .
			\end{multline}
		\end{enumerate}
	\end{proposition}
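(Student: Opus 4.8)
The plan is to analyze when the proposed Serre relation \eqref{eq:sgp-serre} survives the quotient by $\r$, via the following dichotomy. The element $Y_{\pm}\coloneqq[X_{\pm}(\alpha),X_{\pm}(\beta)]-\mu_{\pm}(\alpha,\beta)X_{\pm}(\alpha\sgpp\beta)$ lies in the root space $\wt\g_{\pm(\sgprt{\alpha}+\sgprt{\beta})}\subseteq\wt\n_{\pm}$ of the free Lie algebra. By Lemma~\ref{lem:vanishing}, the relation $Y_{\pm}=0$ holds in $\g(\csgp,\kappa,\xi_\pm)$ if and only if the span of the orbit of $Y_{\pm}$ under $\sfad(L_0)$ and iterated $\sfad(L_{\mp})$ is closed under $\sfad(L_{\mp})$ — equivalently, if and only if repeatedly bracketing $Y_\pm$ with the $X_{\mp}(\gamma)$'s produces elements that, after reducing with the already-established relations \eqref{eq:sgp-lie-2}--\eqref{eq:sgp-lie-4}, lie in the $L_0$-submodule generated by the $X_{\mp}$-descendants of $Y_\pm$. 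So the strategy is: compute $[X_{\mp}(\gamma),Y_\pm]$ explicitly using \eqref{eq:sgp-lie-4}, and organize the resulting conditions by the ``$X_0$-part'', the ``$X_{\pm}$-part'', and the ``$X_{\mp}$-part''.

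The key computational step is the following. First I would treat $\alpha,\beta$ directly (the base case) and then explain how the general case reduces to it. Bracketing $Y_+$ with $X_-(\gamma)$ and using \eqref{eq:sgp-lie-4} together with the Jacobi identity splits the output into three graded pieces: a contribution landing in $L_0$ (proportional to $\drc{\gamma,\alpha}$ or $\drc{\gamma,\beta}$), a contribution landing in $L_+$ (involving $\xi_+(\gamma,\alpha\sgpp\beta)$, $\xi_+(\gamma,\alpha)$, $\xi_+(\gamma,\beta)$ and differences such as $\gamma\sgpm(\alpha\sgpp\beta)$ versus $(\gamma\sgpm\alpha)\sgpm\beta$), and a contribution landing in $L_-$ (involving the $\xi_-$'s symmetrically). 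For the relation to be stable one needs:
\begin{itemize}
\item the $L_0$-part to vanish modulo the ideal: taking $\gamma=\alpha$ or $\gamma=\beta$ forces the identity \eqref{eq:sgp-serre-general-1} relating $\xi_\pm(\alpha\sgpp\beta,\alpha)$, $\xi_\pm(\alpha\sgpp\beta,\beta)$ and $\mu_\mp(\alpha,\beta)$, and then the residual $L_0$-term — which is a scalar times $X_0(\alpha\sgpp\beta)$ minus a genuine element of $\g_0^{\sgp}$ coming from $[X_+(\alpha\sgpp\beta),X_-(\alpha\sgpp\beta)]$ — forces \eqref{eq:sgp-serre-general-2}, i.e.\ the Cartan-matrix-type consistency $\kappa(\alpha,\beta)=\xi_+(\alpha\sgpp\beta,\alpha)\xi_-(\alpha\sgpp\beta,\alpha)-(\drc{\alpha\sgpm\beta}+\drc{\beta\sgpm\alpha})\xi_+(\beta,\alpha)\xi_-(\beta,\alpha)$;
\item the $L_+$-part (for $\gamma\neq\alpha,\beta$) to vanish identically in $\wt\g$: using the associativity/polarization Lemma~\ref{lem:cancellation-associative} to pair up the terms $\gamma\sgpm(\alpha\sgpp\beta)$, $(\gamma\sgpm\alpha)\sgpm\beta$, $(\gamma\sgpm\beta)\sgpm\alpha$, this collapses exactly to \eqref{eq:sgp-serre-general-3};
\item the $L_-$-part to vanish: this is the ``mixed'' condition, where one $X_{\mp}(\beta\sgpm\gamma)$ gets created and must re-bracket back; using \eqref{eq:cancellation-associative} to identify $(\alpha\sgpm\gamma)\sgpp\beta$ with $(\alpha\sgpp\beta)\sgpm\gamma$ etc., this yields precisely \eqref{eq:sgp-serre-general-4}.
\end{itemize}
Conversely, assuming \eqref{eq:sgp-serre-general-1}--\eqref{eq:sgp-serre-general-4}, the same computation shows $\sfad(X_{\mp}(\gamma))Y_\pm$ lies in $\mathsf{span}\{\sfad(L_0)^j\sfad(X_{\mp}(\cdot))^{\le 1}Y_\pm\}$, so Lemma~\ref{lem:vanishing} gives $Y_\pm=0$ in $\g(\csgp,\kappa,\xi_\pm)$.

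For the passage from $(\alpha,\beta)$ to $(\oa,\ob)$ ranging over $\sgp^{\pm}_{\le\alpha}\times\sgp^{\pm}_{\le\beta}$: if $\alpha'\le\alpha$ via a partition with all $\xi_\pm(\alpha_i,\alpha_{i+1})\neq0$, then \eqref{eq:sgp-lie-4} lets one express $X_{\pm}(\alpha')$ (up to a nonzero scalar) as an iterated bracket of $X_{\pm}(\alpha)$ with $X_{\mp}(\beta_i)$'s, so the relation $[X_\pm(\oa),X_\pm(\ob)]=\mu_\pm(\oa,\ob)X_\pm(\oa\sgpp\ob)$ is obtained from $[X_\pm(\alpha),X_\pm(\beta)]=\mu_\pm(\alpha,\beta)X_\pm(\alpha\sgpp\beta)$ by repeatedly applying $\sfad(X_{\mp}(\beta_i))$ and using the Jacobi identity; tracking the coefficients produces exactly the subfamily of conditions indexed by $\oa,\ob$. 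Hence the full set \eqref{eq:sgp-serre-general-1}--\eqref{eq:sgp-serre-general-4} (quantified over $\oa,\ob$) is both necessary and sufficient.

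The main obstacle is the bookkeeping in the second and third bullets: the raw expansion of $[X_\mp(\gamma),Y_\pm]$ produces a dozen terms whose arguments are nested $\sgpp/\sgpm$ expressions, and one must use the positivity axioms of Definition~\ref{def:positive-semigroup} together with Lemma~\ref{lem:cancellation-associative} to recognize that these terms coincide in pairs or triples with the correct signs, so that everything consolidates into the clean cocycle-type identities \eqref{eq:sgp-serre-general-3} and \eqref{eq:sgp-serre-general-4}. In particular, checking that no spurious term survives requires a careful case analysis of which of $\oa\sgpm(\oc\sgpm\ob)$, $\ob\sgpm(\oc\sgpm\oa)$, $(\oa\sgpm\oc)\sgpp\ob$, $\oa\sgpp(\ob\sgpm\oc)$, $(\oa\sgpp\ob)\sgpm\oc$ are simultaneously defined — exactly the content of the polarization axiom — which is where essentially all the work lies, the rest being the standard ideal-stability argument via Lemma~\ref{lem:vanishing}.
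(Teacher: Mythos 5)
Your proposal follows essentially the same route as the paper: form the defect elements $[X_{\pm}(\oa),X_{\pm}(\ob)]-\mu_{\pm}(\oa,\ob)X_{\pm}(\oa\sgpp\ob)$ for $(\oa,\ob)\in\sgp_{\leqslant\alpha}^{\pm}\times\sgp_{\leqslant\beta}^{\pm}$, show that their span is stable under $\sfad(X_{\mp}(\oc))$ exactly when conditions (1)--(4) hold, and conclude via Lemma~\ref{lem:vanishing}, with Lemma~\ref{lem:cancellation-associative} consolidating the nested $\sgpp/\sgpm$ terms. One bookkeeping slip worth noting: in the actual computation \eqref{eq:sgp-serre-general-1} arises from the $L_0$--component at $\oc=\oa\sgpp\ob$, while \eqref{eq:sgp-serre-general-2} arises from the $L_{\pm}$--component at $\oc=\oa,\ob$ (this is where $\kappa$ can enter, through $[X_0(\oa),X_{\pm}(\ob)]$), rather than from a residual $L_0$--term as you state --- but this does not affect the validity of the method.
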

	
	\begin{proof}
		The result follows as a straightforward application of Lemma~\ref{lem:vanishing}.
		Namely, let $X_{\pm,\, \alpha, \, \beta}$ be the element defined by the  equation \eqref{eq:sgp-serre} for the pair 
		$(\alpha,\beta)$. We shall prove that $X_{\pm,\, \alpha, \, \beta}$ generates an ideal in $\wt{\n}_{\pm}$, 
		having trivial intersection with ${L}_0^{\sgp}$. Then, $X_{\pm,\, \alpha, \, \beta}=0$ in $\g(\csgp)$.
		By direct inspection, one sees easily that this holds if and only if the elements $X_{\pm,\, \oa,\, \ob}$, for any 
		$(\oa,\ob)\in\csgp_{\leqslant\alpha}^{\pm}\times\csgp_{\leqslant\beta}^{\pm}$, are also added to the generating set
		of the ideal and the functions $\kappa$ and $\xi_{\pm}$ satisfy the relations listed above. In Appendix~\ref{app:sgp-serre-general},
		we carry out this computation in full details.
		
	\end{proof}

	In Sections~\ref{ss:perp} and \ref{ss:deg} below, we study two special cases 
	for which the conditions \eqref{eq:sgp-serre-general-1}--\eqref{eq:sgp-serre-general-4} 
	are particularly simple to describe.
	
	\subsection{Orthogonality}\label{ss:perp}
	
	Recall that, in a Kac--Moody algebra, generators corresponding to \textit{orthogonal vertices} 
	in the Dynkin diagram (\ie $a_{ij}=0$) commute. In the case of \textit{trivial} Cartan semigroup 
	(\ie with $\xi_{\pm}=0$), Proposition~\ref{pr:sgp-serre-general} reduces precisely to an analogue 
	of the Serre relations for $a_{ij}=0$ in a Kac--Moody algebra.
	
	\begin{corollary}\label{cor:deg-case-km}
		If the functions $\xi_{\pm}$ are trivial, then 
		\begin{align}
			[\xpm{\ia},\xpm{\ib}]=0 \qquad\text{if and only if}\qquad\kappa(\alpha,\beta)=0=\kappa(\beta,\alpha)\ ,
		\end{align}
		where $\alpha,\beta\in\sgp$.
	\end{corollary}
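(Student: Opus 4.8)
The plan is to specialize Proposition~\ref{pr:sgp-serre-general} to the case $\xi_{\pm}\equiv 0$ and read off what survives. First I would note that when $\xi_{\pm}$ are identically zero, we may take $\mu_{\pm}(\alpha,\beta)=0$ for all $\alpha,\beta$, so that the relation \eqref{eq:sgp-serre} becomes exactly $[X_{\pm}(\alpha),X_{\pm}(\beta)]=0$. Next I would examine the partition sets: since $\xi_{\pm}\equiv 0$, for any $n>1$ the condition $\xi_{\pm}(\alpha_i,\alpha_{i+1})\neq 0$ fails, so $\sgp_{\leqslant\alpha}^{\pm}$ consists only of those $\alpha'$ reachable by a partition of length $n\leqslant 1$, i.e. $\sgp_{\leqslant\alpha}^{\pm}=\{\alpha\}$ (taking the empty/trivial partition, $n=1$ giving $\alpha'=\alpha$ when no $\xi$-condition is imposed; and $n=0$ giving $\alpha'=\alpha$). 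Hence in conditions (1)--(3) of the Proposition, the only admissible pair $(\oa,\ob)$ is $(\alpha,\beta)$ itself.

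Then I would go through the four displayed conditions one at a time. Relations \eqref{eq:sgp-serre-general-1} and \eqref{eq:sgp-serre-general-3} become trivial ($0=0=0$) since every factor is a value of some $\xi_{\pm}$ or of $\mu_{\mp}$, all of which vanish. Relation \eqref{eq:sgp-serre-general-4} likewise reduces to $0=0$, since each of its six terms contains a factor of the form $\xi_{\pm}(\cdot,\cdot)$. The only surviving content is \eqref{eq:sgp-serre-general-2}: with $\oa=\alpha$, $\ob=\beta$ and all $\xi$'s equal to zero, the right-hand sides collapse and the relation reads precisely $\kappa(\alpha,\beta)=0=\kappa(\beta,\alpha)$. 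Conversely, if $\kappa(\alpha,\beta)=0=\kappa(\beta,\alpha)$, all of \eqref{eq:sgp-serre-general-1}--\eqref{eq:sgp-serre-general-4} hold, so by Proposition~\ref{pr:sgp-serre-general} the relation $[X_{\pm}(\alpha),X_{\pm}(\beta)]=0$ holds in $\g(\csgp,\kappa,0)$.

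I anticipate the only delicate point is the precise identification of $\sgp_{\leqslant\alpha}^{\pm}$ when $\xi_{\pm}\equiv 0$, in particular making sure that $\sgp_{\leqslant\alpha}^{\pm}=\{\alpha\}$ rather than something larger; this follows from the definition, since a partition of length $n>1$ would require $\xi_{\pm}$ to be nonzero at each intermediate step, which never happens, so only the trivial partition $\alpha=\alpha$ is $\xi_{\pm}$-admissible. Once that is pinned down, everything else is a direct substitution into Proposition~\ref{pr:sgp-serre-general}, and the statement follows immediately. The proof is therefore short: cite the Proposition, observe $\sgp_{\leqslant\alpha}^{\pm}=\{\alpha\}$ and $\sgp_{\leqslant\beta}^{\pm}=\{\beta\}$, and note that \eqref{eq:sgp-serre-general-1}, \eqref{eq:sgp-serre-general-3}, \eqref{eq:sgp-serre-general-4} are vacuous while \eqref{eq:sgp-serre-general-2} is exactly $\kappa(\alpha,\beta)=0=\kappa(\beta,\alpha)$.
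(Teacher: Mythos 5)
Your proposal is correct and is exactly the intended argument: the paper offers no separate proof of this corollary, treating it as the immediate specialization of Proposition~\ref{pr:sgp-serre-general} with $\mu_{\pm}\equiv 0$, the partition sets collapsed to singletons, relations \eqref{eq:sgp-serre-general-1}, \eqref{eq:sgp-serre-general-3}, \eqref{eq:sgp-serre-general-4} rendered vacuous, and \eqref{eq:sgp-serre-general-2} surviving as precisely $\kappa(\alpha,\beta)=0=\kappa(\beta,\alpha)$. One small slip in your justification of the delicate point: in the paper's indexing an $n=1$ partition yields $\alpha'=\alpha_2$ with $(\alpha,\alpha_2)\in\dsgp_{\sgpm}$, not $\alpha'=\alpha$; such $\alpha'$ are nevertheless excluded from $\sgp_{\leqslant\alpha}^{\pm}$ when $\xi_{\pm}\equiv 0$ because the nonvanishing requirement on $\xi_{\pm}$ is meant to apply to every nontrivial partition (as Remark~\ref{rem:degenerate} and the proof of the Proposition --- where the terms forcing enlargement of the generating set all carry a factor $\xi_{\pm}(\ob,\oc)$ --- both confirm), so your conclusion $\sgp_{\leqslant\alpha}^{\pm}=\{\alpha\}$ and the rest of the argument stand.
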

	
	More in general, Corollary~\ref{cor:deg-case-km} suggests to introduce a suitable notion of
	orthogonal subsemigroups, so that the corresponding generators automatically commute.
	
	\begin{definition}\label{def:perp}
		Let $\sgp',\sgp''\subseteq\sgp$ be two saturated\footnote{The notion of saturated sub--semigroup is given in Definition~\ref{def:saturated}.} sub--semigroups. We say that $\sgp'$ and $\sgp''$ are \textit{orthogonal},
		and we write $\sgp'\perp\sgp''$, if they satisfy the following property: for any $\oa\in\sgp'$,
		$\ob\in\sgp''$ and $\oc\in\sgp$,
		\begin{enumerate}\itemsep0.2cm
			\item \label{def:perp-(1)} $\oa\sgpp\ob$ is not defined;
			\item \label{def:perp-(2)} $\kappa(\oa,\ob)=0=\kappa(\ob,\oa)$;
			\item \label{def:perp-(3)} $\xi_{\pm}(\oc,\ob)=\xi_{\pm}(\oc\sgpm\oa,\ob)$ and $\xi_{\pm}(\ob,\oc)=\xi_{\pm}(\ob,\oc\sgpm\oa)$ whenever all terms are defined. 
		\end{enumerate}
		If $\sgp'\perp\sgp''$, $\oa\in\sgp'$, and $\ob\in\sgp''$, we also write $\oa\perp\ob$.
	\end{definition}
	
	\begin{remark}\label{rem:perp}
		\hfill
		\begin{enumerate}[label=(\roman*), leftmargin=1.8em]\itemsep=0.2cm
			\item \label{rem:perp-(i)} Note that if $\oa\perp\ob$, then the elements $\oa\sgpm\ob$, $\ob\sgpm\oa$ 
			are never defined. Indeed, since $\sgp'$ is saturated and $\oa\in\sgp'$, then $\oa\sgpm\ob$ should belong to $\sgp'$.
			Therefore the sum $(\oa\sgpm\ob)\sgpp\ob=\oa$ would be defined, contradicting
			condition \eqref{def:perp-(1)} above.
			
			\item \label{rem:perp-(ii)} Note also that, by equation \eqref{eq:sgp-datum-1}, $\kappa$ already satisfies the analogue of condition \eqref{def:perp-(3)} above. 
			Namely, if $\oa\perp\ob$, then it follows from condition \eqref{def:perp-(2)} above that
			\begin{align}
				\kappa(\oc,\oa)=\kappa((\oc\sgpm\ob)\sgpp\ob, \oa)=
				\kappa(\oc\sgpm\ob,\oa)+\kappa(\ob,\oa)=\kappa(\oc\sgpm\ob,\oa)\ ,\\
				\kappa(\oa,\oc)=\kappa(\oa, (\oc\sgpm\ob)\sgpp\ob)=
				\kappa(\oa, \oc\sgpm\ob)+\kappa(\oa, \ob)=\kappa(\oa, \oc\sgpm\ob)\ ,
			\end{align}
			whenever $\oc\sgpm\ob$ is defined.
		\end{enumerate}
	\end{remark}
	
	This notion of orthogonality produces the desired outcome.
	
	\begin{corollary}\label{cor:perp-comm}	
		Let $\alpha,\beta\in\sgp$ be such that $\alpha\perp\beta$. Then $[\xpm{\ia},\xpm{\ib}]=0$.
	\end{corollary}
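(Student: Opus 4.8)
The plan is to derive the vanishing directly from Proposition~\ref{pr:sgp-serre-general}. I would take the functions $\mu_{+}\equiv 0\equiv\mu_{-}$ in that proposition; then relation~\eqref{eq:sgp-serre} for the pair $(\alpha,\beta)$ is precisely $[X_{\pm}(\alpha),X_{\pm}(\beta)]=0$, so it suffices to check that conditions \eqref{eq:sgp-serre-general-1}--\eqref{eq:sgp-serre-general-4} hold for every $\oa\in\sgp_{\leqslant\alpha}^{\pm}$, every $\ob\in\sgp_{\leqslant\beta}^{\pm}$, and every $\oc\in\sgp$.

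First I would record that $\sgp_{\leqslant\alpha}^{\pm}\subseteq\sgp_{\leqslant\alpha}\subseteq\sgp'$ and $\sgp_{\leqslant\beta}^{\pm}\subseteq\sgp_{\leqslant\beta}\subseteq\sgp''$. Indeed, every element of $\sgp_{\leqslant\alpha}$ arises from $\alpha$ by iterated truncations, and if $\gamma\in\sgp'$ and $\gamma\sgpm\delta$ is defined then $(\gamma\sgpm\delta)\sgpp\delta=\gamma$ by maximality of the cancellation law, so saturation of $\sgp'$ forces $\gamma\sgpm\delta\in\sgp'$; iterating gives the claim, and likewise for $\beta$ and $\sgp''$. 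In particular $\oa\perp\ob$ for every pair $(\oa,\ob)$ that occurs, and, by the same reasoning, every defined truncation $\oa\sgpm\oc$, $\ob\sgpm\oc$ (or iterated truncation) of such elements again lies in $\sgp'$, resp.\ $\sgp''$.

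The substance of the proof is then the observation that, under $\oa\perp\ob$, practically every semigroup expression appearing in \eqref{eq:sgp-serre-general-1}--\eqref{eq:sgp-serre-general-4} is undefined. Namely: $\oa\sgpp\ob$ is undefined by Definition~\ref{def:perp}(1), so $\mu_{\pm}(\oa,\ob)=0$ and the expressions $\oc\sgpm(\oa\sgpp\ob)$ and $(\oa\sgpp\ob)\sgpm\oc$ are undefined as well; $\oa\sgpm\ob$ and $\ob\sgpm\oa$ are undefined by Remark~\ref{rem:perp}(i); the expressions $(\oa\sgpm\oc)\sgpp\ob$ and $\oa\sgpp(\ob\sgpm\oc)$ are undefined because $\oa\sgpm\oc\in\sgp'$ and $\ob\sgpm\oc\in\sgp''$, again by Definition~\ref{def:perp}(1); and finally $\ob\sgpm(\oc\sgpm\oa)$, $\oa\sgpm(\oc\sgpm\ob)$, $(\oc\sgpm\oa)\sgpm\ob$ and $(\oc\sgpm\ob)\sgpm\oa$ are undefined. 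This last point is the only one requiring a short argument. If $\ob\sgpm(\oc\sgpm\oa)$ were defined, then $\oc=\oa\sgpp(\oc\sgpm\oa)$ and $\ob=(\oc\sgpm\oa)\sgpp\big(\ob\sgpm(\oc\sgpm\oa)\big)$, so saturation of $\sgp''$ forces $\oc\sgpm\oa\in\sgp''$, whence $\oc=\oa\sgpp(\oc\sgpm\oa)$ is a sum of an element of $\sgp'$ with an element of $\sgp''$, contradicting Definition~\ref{def:perp}(1); the case of $\oa\sgpm(\oc\sgpm\ob)$ is symmetric. For $(\oc\sgpm\oa)\sgpm\ob$ I would first use Lemma~\ref{lem:cancellation-associative}, together with the fact that $\oc\sgpm(\oa\sgpp\ob)$ is undefined, to conclude that $(\oc\sgpm\oa)\sgpm\ob$ and $(\oc\sgpm\ob)\sgpm\oa$ are defined together and then equal, say to $e$, so that $\oc=\oa\sgpp(\ob\sgpp e)=\ob\sgpp(\oa\sgpp e)$; strong associativity of $\sgpp$ (Definition~\ref{def:positive-semigroup}(4)) then makes $(\oa\sgpp\ob)\sgpp e$ defined, forcing $\oa\sgpp\ob$ to be defined --- once more a contradiction.

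With all these expressions gone, and with $\kappa(\oa,\ob)=0=\kappa(\ob,\oa)$ from Definition~\ref{def:perp}(2), each of the relations \eqref{eq:sgp-serre-general-1}--\eqref{eq:sgp-serre-general-4} collapses to $0=0$, and Proposition~\ref{pr:sgp-serre-general} gives $[X_{\pm}(\alpha),X_{\pm}(\beta)]=0$. The only place demanding genuine care is the bookkeeping in the previous paragraph: keeping straight which truncations land in $\sgp'$ and which in $\sgp''$, and correctly invoking Lemma~\ref{lem:cancellation-associative} and strong associativity to eliminate the iterated truncations; everything else is purely formal. (Note, incidentally, that Definition~\ref{def:perp}(3) is not needed for this particular argument.)
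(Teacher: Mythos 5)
Your overall strategy is the paper's: reduce to Proposition~\ref{pr:sgp-serre-general} and check that the conditions \eqref{eq:sgp-serre-general-1}--\eqref{eq:sgp-serre-general-4} degenerate, using $\sgp_{\leqslant\alpha}^{\pm}\subseteq\sgp'$ and $\sgp_{\leqslant\beta}^{\pm}\subseteq\sgp''$. However, there is a genuine gap in your treatment of \eqref{eq:sgp-serre-general-3}: the claim that $(\oc\sgpm\oa)\sgpm\ob$ and $(\oc\sgpm\ob)\sgpm\oa$ are never defined is false. Take the motivating example $\csgp(\R)$ with $\oa=(0,1]$, $\ob=(2,3]$, $\oc=(0,3]$: then $\oa\perp\ob$, yet $(\oc\sgpm\oa)\sgpm\ob=(1,2]=(\oc\sgpm\ob)\sgpm\oa$ is perfectly well defined. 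Your attempted contradiction via strong associativity does not go through: from $\oc=(\ob\sgpp e)\sgpp\oa$ being defined, Definition~\ref{def:positive-semigroup}(4) only yields that \emph{either} $\ob\sgpp(e\sgpp\oa)$ \emph{or} $e\sgpp(\ob\sgpp\oa)$ is defined, and the first alternative (which is just $\oc$ again) is the one that actually holds; nothing forces the bracketing $(\oa\sgpp\ob)\sgpp e$ to be defined. In this situation condition \eqref{eq:sgp-serre-general-3} does not collapse to $0=0$ but rather to the genuine identity
\begin{align}
\xi_{\pm}(\oc,\oa)\,\xi_{\pm}(\oc\sgpm\oa,\ob)=\xi_{\pm}(\oc,\ob)\,\xi_{\pm}(\oc\sgpm\ob,\oa)\ ,
\end{align}
and this is exactly what Definition~\ref{def:perp}(3) is there to provide: it gives $\xi_{\pm}(\oc\sgpm\oa,\ob)=\xi_{\pm}(\oc,\ob)$ and (by the symmetric role of $\sgp'$ and $\sgp''$) $\xi_{\pm}(\oc\sgpm\ob,\oa)=\xi_{\pm}(\oc,\oa)$. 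So your parenthetical remark that condition (3) of orthogonality is not needed is precisely the point at which the argument breaks; the paper's proof invokes it for this very step. The remaining bookkeeping in your proposal (undefinedness of $\oa\sgpp\ob$, $\oa\sgpm\ob$, $\ob\sgpm\oa$, $(\oa\sgpm\oc)\sgpp\ob$, $\ob\sgpm(\oc\sgpm\oa)$, etc., via saturation and maximality of the cancellation law) is correct and matches the paper's use of Remark~\ref{rem:perp}(i) and Definition~\ref{def:perp}(1)--(2).
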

	
	\begin{proof}
		Let $\sgp', \sgp''$ be the smallest orthogonal saturated sub--semigroups containing $\alpha,\beta$, respectively.
		Note that $\csgp_{\leqslant\alpha}^{\pm}\subseteq\sgp'$ and $\csgp_{\leqslant\beta}^{\pm}\subseteq\sgp''$.
		Thus, \eqref{eq:sgp-serre-general-1} and \eqref{eq:sgp-serre-general-2} are automatically satisfied in view of
		Remark~\ref{rem:perp}--\ref{rem:perp-(i)} and Definition~\ref{def:perp}--\eqref{def:perp-(1)} and \eqref{def:perp-(2)}. The condition \eqref{eq:sgp-serre-general-3} follows immediately from 
		Definition~\ref{def:perp}--\eqref{def:perp-(3)}. Finally,  \eqref{eq:sgp-serre-general-4} follows from Remark~\ref{rem:perp}--\ref{rem:perp-(i)}.
	\end{proof}
	
	\subsection{Degenerate elements and Serre relations}\label{ss:deg}
	
	Another particularly simple case is given by elements which, roughly, do not interact
	with any other element. 
	
	\begin{definition}
		We say that $\alpha\in\sgp$ is 
		\begin{itemize}\itemsep0.3cm
			\item  \textit{degenerate} if, for any $\beta\in\sgp$, one of the following holds:
			
			\begin{enumerate}\itemsep0.2cm
				\item $\alpha\sgpp\beta$, $\alpha\sgpm\beta$, $\beta\sgpm\alpha$ are not defined;
				\item if $(1)$ does not hold, then $\xi_{\pm}(\alpha,\beta)$, $\xi_{\pm}(\beta,\alpha)$, 
				$\xi_{\pm}(\alpha\sgpp\beta,\beta)$, $\xi_{\pm}(\beta,\alpha\sgpp\beta)$ vanish.
			\end{enumerate}
			\item \textit{locally} degenerate if there exists a saturated sub--semigroup $\sgp'$ such that $\ia$ is degenerate in $\sgp'$.
			
		\end{itemize}
		We denote by $\degp(\sgp)$ (resp.\  $\ldegp(\sgp)$) the subset of  degenerate (resp.\ locally degenerate) elements in $\sgp$.
	\end{definition}
	\begin{remark}\label{rem:degenerate}
		Let $\ia\in\ldegp(\sgp)$ and $\sgp'$ a saturated sub--semigroup such that $\alpha\in \degp(\sgp')$. 
		Then, $\sgp_{\leqslant\ia}\subseteq \sgp'$ and $\csgp_{\leqslant\ia}^\pm=\{\ia\}$.
	\end{remark}
	
	The generators $\xpm{\ia}$, for $\ia\in\degp(\sgp)$, satisfy simple commutation relations in $\g(\csgp)$.
	\begin{proposition}\label{prop:deg-elem-rel}
		\hfill
		\begin{enumerate}\itemsep0.2cm
			\item \label{prop:deg-elem-rel-(1)} Let $\alpha\in\degp(\sgp)$ and $\beta\in\sgp$, $\beta\neq\alpha$. Then 
			\begin{align}
				[\xpm{\ia}, \xpm{\ib}]=0
			\end{align}
			if and only if $\kappa(\alpha,\gamma)$, $\kappa(\gamma,\alpha)$ vanish for any $\gamma\in\csgp_{\leqslant\beta}^{\pm}$.
			\item \label{prop:deg-elem-rel-(2)}
			For any $\alpha,\beta\in\degp(\sgp)$, $\alpha\neq\beta$, 
			such that $\kappa(\alpha,\alpha)=2$, $\kappa(\alpha,\beta)
			\in\Z_{\leqslant0}$, and $\kappa(\alpha,\beta)=0$ if and only if $\kappa(\beta,\alpha)=0$.
			Then, 
			\begin{align}
				\sfad(\xpm{\ia})^{1-\kappa(\alpha,\beta)}(\xpm{\ib})=0
			\end{align}
		\end{enumerate}
	\end{proposition}
	
	\begin{proof}
		\eqref{prop:deg-elem-rel-(1)} We proceed as in Proposition~\ref{pr:sgp-serre-general}.
		Let $S_{\pm}^{\alpha\beta}$ be the subspace spanned by $[\xpm{\ia}, \xpm{\ib}]$ 
		and all elements of the form $[\xpm{\ia}, \xpm{\ib\sgpm\ic}]$ for some $\gamma\in\sgp$.
		Then, one readily sees that, since $\alpha\in\degp(\sgp)$, 
		the identities \eqref{eq:sgp-serre-h}, \eqref{eq:sgp-serre-f} are trivial
		and, since $\kappa(\alpha,\beta)=0=\kappa(\beta,\alpha)$, 
		\eqref{eq:sgp-serre-e} reduces to $\xi_{\pm}(\beta,\gamma)[\xpm{\ia}, 
		\xpm{\ib\sgpm\ic}]$. The result follows as usual from Lemma~\ref{lem:vanishing}.
		
		\eqref{prop:deg-elem-rel-(2)} The proof follows closely \cite[Section 3.3]{kac-90}. We first observe that 
		$(\xp{\ia}, \xz{\ia}, \xm{\ia})$ is an $\sl(2)$--triple, since $\kappa(\alpha,\alpha)=2$,
		which acts on $\g(\csgp)$ by restriction. 
		
		Set $v_k=(\xm{\ia})^k\cdot \xm{\ib}$, $k\geqslant 0$. 
		Then, $\xz{\ia}\cdot v_k=(-\kappa(\alpha,\beta)-2k)v_k$, and
		\begin{align}\label{eq:sl(2)-rep}
			\xp{\ia}\cdot v_k=k(-\kappa(\alpha,\beta)-k+1)v_k\ .
		\end{align} 
		The last relation follows from the fact that $\alpha\in\degp(\sgp)$ and therefore 
		$[\xp{\ia}, \xm{\ib}]=0$. 
		
		Set $\theta_{\alpha\beta}=(\xm{\ia})^{1-\kappa(\alpha,\beta)}\cdot \xm{\ib}$.
		By \eqref{eq:sl(2)-rep}, $[\xp{\ia}, \theta_{\alpha\beta}]=0$. 
		Note that, for any $k\geqslant 1$, one has
		\begin{align}\label{eq:sl(2)-aux}
			\begin{aligned}
				[\xp{\ib}, &(\xm{\ia})^k\cdot \xm{\ib}]=
				\xm{\ia}\cdot [\xp{\ib}, (\xm{\ia})^{k-1}\cdot \xm{\ib}]=\\
				=&(\xm{\ia})^k\cdot [\xp{\ib}, \xm{\ib}]=
				(\xm{\ia})^k\cdot \xz{\ib}=\kappa(\beta,\alpha)(\xm{\ia})^{k-1}\cdot \xm{\ia}
				\ .
			\end{aligned}
		\end{align}
		For $k=1-\kappa(\alpha,\beta)$, this implies $[\xp{\ib}, \theta_{\alpha\beta}]=0$,
		since if $\kappa(\alpha,\beta)=0$ then $\kappa(\beta,\alpha)=0$ and
		if $\kappa(\alpha,\beta)\neq0$ then $k>1$.
		
		Finally, since $\alpha,\beta\in\degp(\sgp)$, for any $\gamma\neq\alpha,\beta$, one has
		$[\xp{\ic}, \xm{\ia}]=0=[\xp{\ic}, \xm{\ib}]$ and therefore
		$[\xp{\ic},\theta_{\alpha\beta}]=0$. By Lemma~\ref{lem:vanishing}, $\theta_{\alpha\beta}$=0.
	\end{proof}
	
	Proposition~\ref{prop:deg-elem-rel}--\eqref{prop:deg-elem-rel-(1)} 
	is easily generalized at the level of saturated sub--semigroups and we get the following
	corollary. 
	
	\begin{corollary}\label{cor:commutation-degenerate}
		Let $\alpha$ be degenerate in a saturated sub--semigroup $\sgp'$, for which $\kappa(\alpha, \gamma)=0=\kappa(\gamma,\alpha)$ for any $\gamma\in \sgp'$. 
		Then  $[\xpm{\ia},\xpm{\ib}]=0$ for any $\beta\in\sgp_{\leqslant \alpha}$.
	\end{corollary}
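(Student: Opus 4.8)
The plan is to mimic, at the level of the saturated sub-semigroup $\sgp'$, the proof of Proposition~\ref{prop:deg-elem-rel}(1). First I would record the purely combinatorial input. By Remark~\ref{rem:degenerate} we have $\sgp_{\leqslant\alpha}\subseteq\sgp'$; since the relation $\leqslant$ is transitive (one concatenates partitions), any $\beta\in\sgp_{\leqslant\alpha}$ satisfies $\sgp_{\leqslant\beta}\subseteq\sgp_{\leqslant\alpha}\subseteq\sgp'$, hence $\sgp_{\leqslant\beta}^{\pm}\subseteq\sgp'$ and, by the hypothesis on $\kappa$, $\kappa(\alpha,\gamma)=\kappa(\gamma,\alpha)=0$ for every $\gamma\in\sgp_{\leqslant\beta}^{\pm}$. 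If $\beta=\alpha$ the bracket vanishes by antisymmetry, so from now on I assume $\beta\neq\alpha$.

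Next I would run the argument of Proposition~\ref{prop:deg-elem-rel}(1) verbatim. Let $S_{\pm}^{\alpha\beta}\subseteq\n_{\pm}$ be the span of the elements $[X_{\pm}(\alpha),X_{\pm}(\beta')]$ with $\beta'\in\sgp_{\leqslant\beta}$. By Lemma~\ref{lem:vanishing} it suffices to show $[S_{\pm}^{\alpha\beta},X_{\mp}(\gamma)]\subseteq S_{\pm}^{\alpha\beta}$ for every $\gamma\in\sgp$. Expanding $[[X_{\pm}(\alpha),X_{\pm}(\beta')],X_{\mp}(\gamma)]$ with the Jacobi identity and the relations \eqref{eq:sgp-lie-3}, \eqref{eq:sgp-lie-4}, exactly as in the proof of Proposition~\ref{pr:sgp-serre-general}, this membership is equivalent to the identities \eqref{eq:sgp-serre-h}, \eqref{eq:sgp-serre-f} and \eqref{eq:sgp-serre-e} with $\oa=\alpha$, $\ob=\beta'$ and $\mu_{\pm}\equiv 0$. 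In those identities every term carrying a factor $\kappa(\alpha,\beta')$ or $\kappa(\beta',\alpha)$ vanishes by the previous paragraph; every term carrying a factor of the shape $\xi_{\pm}(\alpha,-)$, $\xi_{\pm}(-,\alpha)$, $\xi_{\pm}(\alpha\sgpp-,-)$ or $\xi_{\pm}(-,\alpha\sgpp-)$ vanishes because $\alpha$ is degenerate in $\sgp'$; and the only surviving contribution, coming from \eqref{eq:sgp-serre-e}, is $\xi_{\pm}(\beta',\gamma)\,[X_{\pm}(\alpha),X_{\pm}(\beta'\sgpm\gamma)]$, which lies in $S_{\pm}^{\alpha\beta}$ since $\beta'\sgpm\gamma\in\sgp_{\leqslant\beta'}\subseteq\sgp_{\leqslant\beta}$ whenever it is defined. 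Hence $S_{\pm}^{\alpha\beta}$ meets the hypothesis of Lemma~\ref{lem:vanishing}, so $S_{\pm}^{\alpha\beta}=0$ and in particular $[X_{\pm}(\alpha),X_{\pm}(\beta)]=0$.

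The step I expect to be the real work is justifying that every term with a $\xi$-factor of ``degenerate shape'' vanishes: unlike in Proposition~\ref{prop:deg-elem-rel}(1), where $\alpha$ is degenerate in all of $\sgp$, here $\gamma$ ranges over all of $\sgp$ while $\alpha$ is degenerate only in $\sgp'$, so one must check that each argument of such a $\xi$ already belongs to $\sgp'$. This is exactly where one uses that $\sgp'$ is a \emph{saturated} sub-semigroup: whenever $\alpha\sgpm\gamma$, $\gamma\sgpm\alpha$, or one of the iterated differences appearing in \eqref{eq:sgp-serre-h}--\eqref{eq:sgp-serre-e} is defined, saturation (together with $\sgp'$ being a sub-semigroup) places the resulting element, and the corresponding summand of $\alpha$, inside $\sgp'$; the positivity axioms of Definition~\ref{def:positive-semigroup} (strong associativity and polarization) together with Lemma~\ref{lem:cancellation-associative} then guarantee that the remaining nested differences are either undefined or, again, lie in $\sgp'$, so that degeneracy of $\alpha$ in $\sgp'$ applies and kills the factor. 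Once this bookkeeping is carried out, the rest is a mechanical transcription of the proof of Proposition~\ref{prop:deg-elem-rel}(1).
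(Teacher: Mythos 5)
Your overall strategy is the right one and is exactly what the paper intends: the paper gives no argument beyond the remark that Proposition~\ref{prop:deg-elem-rel}--(1) ``is easily generalized'' to saturated sub--semigroups, and your reduction to the identities \eqref{eq:sgp-serre-h}, \eqref{eq:sgp-serre-f}, \eqref{eq:sgp-serre-e} with $\mu_{\pm}\equiv 0$, together with the observations that $\sgp_{\leqslant\beta}\subseteq\sgp_{\leqslant\alpha}\subseteq\sgp'$ and that all $\kappa$--terms therefore vanish, is correct and is more detail than the paper itself supplies.

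The issue sits precisely in the step you defer, and your blanket claim that ``saturation places the resulting element, and the corresponding summand of $\alpha$, inside $\sgp'$'' is not true in all the configurations that occur. It does work whenever the definedness constraints involve decomposing an element already known to lie in $\sgp'$: if $\alpha\sgpm\oc$ is defined, saturation applied to $\alpha$ puts $\oc$ and $\alpha\sgpm\oc$ in $\sgp'$; and in the summand $\xi_{\mp}(\oc,\alpha)\xi_{\pm}(\beta',\oc\sgpm\alpha)X_{\pm}(\beta'\sgpm(\oc\sgpm\alpha))$ of \eqref{eq:sgp-serre-e}, the requirement that $\beta'\sgpm(\oc\sgpm\alpha)$ be defined lets you saturate $\beta'$ to get $\oc\sgpm\alpha\in\sgp'$ and then the sub--semigroup property gives $\oc=(\oc\sgpm\alpha)\sgpp\alpha\in\sgp'$, so degeneracy applies. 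But in \eqref{eq:sgp-serre-f} the surviving coefficients are $\xi_{\mp}(\oc,\alpha)\xi_{\mp}(\oc\sgpm\alpha,\beta')$ and $\xi_{\mp}(\oc,\beta')\xi_{\mp}(\oc\sgpm\beta',\alpha)$, whose nonvanishing requires $(\oc\sgpm\alpha)\sgpm\beta'$, respectively $(\oc\sgpm\beta')\sgpm\alpha$, to be defined; here the element carrying $\alpha$ as a summand ($\oc$, resp.\ $\oc\sgpm\beta'$) is \emph{not} a priori in $\sgp'$, saturation and the sub--semigroup property give you nothing, and degeneracy of $\alpha$ \emph{in $\sgp'$} does not directly annihilate $\xi_{\mp}(\oc,\alpha)$ or $\xi_{\mp}(\oc\sgpm\beta',\alpha)$. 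To close the argument you must either show that the configuration ``$\oc\sgpm\alpha$ and $(\oc\sgpm\alpha)\sgpm\beta'$ both defined with $\beta'\in\sgp_{\leqslant\alpha}$ and $\oc\notin\sgp'$'' is impossible (this is what happens in the topological--quiver model, where $\beta'$ sits inside $\alpha$ while $\oc\sgpm\alpha$ is disjoint from $\alpha$, but it does not follow formally from the positivity axioms alone), or find another reason for these terms to vanish. As written, the proposal does not cover this case.
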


	\subsection{Total semigroups}\label{s:sgp-example}
	We conclude this section by showing that the situation becomes particularly simple
	for an \emph{actual} semigroup with operations which are always defined.
	Let $\csgp=(\sgp,\kappa,\xi_{\pm})$ be a Cartan semigroup satisfying the following additional 
	conditions:
	\begin{enumerate}\itemsep0.2cm
		\item \label{condition-(1)} $\csgp$ is \emph{total} with respect to $\sgpp$, \ie $\dsgp_{\sgpp}=\sgp\times\sgp$;
		\item \label{condition-(2)} 
		$\csgp$ is \emph{total} with respect to $\sgpm$, \ie
		for any $\ia, \ib\in \sgp$, $\ia\neq \ib$, either $(\ia,\ib)$  or $(\ib, \ia)$ is in $\dsgp_{\sgpm}$;
		\item \label{condition-(3)} $\kappa =0$;
		\item \label{condition-(4)} $\xi_{+}=\xi_{-}\eqqcolon \xi$ is antisymmetric and $\sgp$-linear. 
	\end{enumerate}
	In this case, the quadratic Serre relations hold globally for every pair of elements in $\sgp$.
	
	\begin{proposition}\label{pr:sgp-serre-example} 
		For any $\ia, \ib\in \sgp$, it holds
		$[\xpm{\ia}, \xpm{\ib}]=\xi(\ib, \ia)\cdot \xpm{\ia\sgpp\ib}\, $
		in $\g(\csgp)$.
	\end{proposition}
	
	\begin{proof}
		The result follows from a straightforward application of Proposition~\ref{pr:sgp-serre-general},
		based on a case--by--case analysis. 
		Note that there is a standard strict partial order $<$ on $\sgp$ given by
		\begin{align}
			\ia < \ib\quad \text{if and only if} \quad \ib \sgpm \ia \in \sgp\ ,
		\end{align}
		for $\ia, \ib\in \sgp$. First, note that equation \eqref{eq:sgp-serre-general-1} follows from the antisymmetry of $\xi$, while, because of condition \eqref{condition-(2)}, equation \eqref{eq:sgp-serre-general-2} holds if and only if $\kappa= 0$. Moreover, equation \eqref{eq:sgp-serre-general-3} is either trivial or all the terms are defined since:
		\begin{align}
			\oa\sgpp \ob < \oc \quad \Leftrightarrow \quad \oa <\oc \ \text{ and } \ \ob< \oc \sgpm \oa \quad \Leftrightarrow \quad \ob <\oc \ \text{ and } \ \oa< \oc \sgpm \ob \ .
		\end{align}
		In the latter case, a straightforward check show that equation \eqref{eq:sgp-serre-general-3} holds. 
		Finally, we have to check that relation~\eqref{eq:sgp-serre-general-4} holds in the following mutually exclusive cases:
		
		\begin{itemize}\itemsep0.2cm
			\item $\oc<\oa\sgpp \ob$, $\oc<\oa$, and $\oc<\ob$; 
			
			\item $\oc<\oa\sgpp \ob$, $\oc<\oa$, and $\ob<\oc$; 
			
			\item $\oc<\oa\sgpp \ob$, $\oa<\oc$, and $\ob<\oc$; 
			
			\item $\oc<\oa\sgpp \ob$, $\oc<\ob$, and $\oa<\oc$; 
			
			\item $\oa\sgpp \ob<\oc$. 
		\end{itemize}
		Note that in the last case, both sides of \eqref{eq:sgp-serre-general-4} are zero.
		The other cases are proved with a simple direct inspection.
	\end{proof}	
	
	\bigskip\section{Good  Cartan semigroups and Serre relations}\label{s:sgp-serre}
	
	In this section, we introduce the notion of \textit{good} Cartan semigroup. The
	definition is tailored around the conditions highlighted by Proposition~\ref{pr:sgp-serre-general}
	and aims to provide a list of simple properties a Cartan semigroup $\csgp$
	should satisfy so to induce Serre relations in the Lie algebra $\g(\csgp)$.
	
	\subsection{Good  Cartan semigroups}
	
	Let $\csgp=(\sgp,\kappa,\xi_{\pm})$ be a Cartan semigroup.
	Locally degenerate elements are semigroup analogues of imaginary roots. 
	This leads to the following definition.
	
	\begin{definition}
		We say that $\alpha\in\sgp$ is \textit{imaginary} if there exists $\alpha'\in \sgp_{\leqslant \alpha}$ which is locally degenerate; while $\alpha$ is \textit{real} if it is not imaginary. We denote by $\isgp$ (resp.\ $\rsgp$) the set of  imaginary (resp.\ real) elements of $\sgp$.
	\end{definition}
	
	\begin{remark}
		Note that, if $\alpha\in\rsgp$, then $\sgp_{\leqslant\alpha}\subseteq\rsgp$.
	\end{remark}
	
	The following notion of good Cartan semigroup lists five crucial properties concerning
	the basic semigroup operations, the real elements and the functions $\kappa$ and $\xi_{\pm}$.
	
	\begin{definition}\label{def:sym-sgp}
		A \textit{Cartan semigroup} $\csgp$ is \textit{good}
		if the following conditions hold.
		\begin{enumerate}\itemsep0.2cm
			\item \label{def:sym-sgp-1} {\bf Multiplicity free.}  For any $\alpha,\beta\in\sgp$, at most one between the elements 
			$\alpha\sgpp\beta$, $\alpha\sgpm\beta$, and $\beta\sgpm\alpha$ is defined ($\dsgp_{\sgpp}\cap\dsgp_{\sgpm}=\emptyset$).
			\item \label{def:sym-sgp-2} {\bf Locality.} The following holds:
			\begin{itemize}\itemsep0.2cm
				\item[(L1)] if $\alpha\not\perp\beta$, then $(\gamma\sgpm\alpha)\sgpm\beta$ is defined only if $\alpha\sgpp\beta$ is defined;
				\item[(L2)] if $(\alpha\sgpp\beta)\sgpm\gamma$ is defined and $\alpha\perp\gamma$, then $\beta\sgpm\gamma$ is defined.
			\end{itemize}
			\item \label{def:sym-sgp-3} {\bf Real elements.} If $\alpha\in\rsgp$ and $\gamma\sgpm\alpha$ is defined for some $\gamma\not\in\rsgp$,
			then there exists $\gamma'\in\rsgp$ such that $\gamma\sgpm\gamma'$ is defined and  $\alpha\perp(\gamma\sgpm\gamma')$.
			
			\item \label{def:sym-sgp-4} $\xi_+(\alpha,\beta)=\xi_-(\beta,\alpha)$ and satisfies the following properties ($\xi\coloneqq\xi_+$):
			\begin{itemize}\itemsep0.2cm
				\item for any $\alpha,\beta\in\sgp$,
				\begin{align}
					\xi(\alpha\sgpp\beta,\alpha)=-\xi(\alpha\sgpp\beta,\beta) \ ,\label{eq:sym-sgp-datum-3}\\
					\xi(\alpha,\alpha\sgpp\beta)=-\xi(\beta,\alpha\sgpp\beta) \ ;\label{eq:sym-sgp-datum-4}
				\end{align}
				\item for any $\alpha,\beta,\gamma\in\sgp$ such that 
				\begin{itemize}
					\item $\alpha\sgpm\beta$, $\alpha\sgpp\gamma$ and $\beta\sgpp\gamma$ are defined;
					\item if $\alpha\sgpp \gamma$ is imaginary, then $\beta\sgpp \gamma$ is not a partition of the locally degenerate part of $\alpha\sgpp \gamma$;
					\footnote{Note that this condition implies $\alpha\sgpp \gamma, \beta\sgpp \gamma\notin \ldegp(\sgp)$.}
				\end{itemize}
				one has
				\begin{align}\label{eq:sym-sgp-datum-5}
					\xi(\alpha\sgpp\gamma,\beta\sgpp\gamma)=
					\xi(\alpha,\beta)\ .
				\end{align}
			\end{itemize}
			\item \label{def:sym-sgp-5} $\kappa$ is symmetric (\ie $\kappa(\alpha,\beta)=\kappa(\beta,\alpha)$) and satisfies the following properties:
			\begin{itemize}\itemsep0.2cm
				\item for any $\alpha,\beta,\gamma\in\sgp$,
				\begin{align}\label{eq:sym-sgp-datum-1}
					\kappa(\alpha\sgpp\beta,\gamma)&=\drc{\alpha\sgpp\beta}\left(\kappa(\alpha,\gamma)+\kappa(\beta,
					\gamma)\right) \ ;
				\end{align}
				\item for any $\alpha\in\rsgp$ and $\beta\in\sgp$,
				\begin{align}\label{eq:sym-sgp-datum-2}
					\kappa(\alpha,\beta)=
					\begin{cases}
						\xi(\alpha\sgpp\beta,\alpha)\xi(\alpha,\alpha\sgpp\beta) & \text{if } (\alpha,\beta)\in\dsgp_{\sgpp} \text{ and $(\ast)$ holds}\ ,\\[4pt]
						\phantom{a}-\xi(\alpha,\beta)\xi(\beta,\alpha) & \text{if } (\alpha,\beta)\in\dsgp_{\sgpm} \text{ or }  
						(\beta,\alpha)\in\dsgp_{\sgpm} \ ,\\[4pt]
						\phantom{\xi(\alpha\sgpp\beta,\alpha}0 & \text{if } \alpha\sgpp\beta,\alpha\sgpm\beta, \text{ and }\beta\sgpm\alpha \text{ are not defined}\ ,
					\end{cases}
				\end{align}
				where $(\ast)$ is the following condition: if $\beta\in\rsgp$ then $\alpha\sgpp \beta\in \rsgp$; if $\beta\in \isgp$ then $\alpha\sgpp\beta\notin\ldegp(\sgp)$.
			\end{itemize}
		\end{enumerate}
	\end{definition}
	
	\begin{remark}\label{rem:simply-laced-sgp}
		Note that, by (L2) and (3), it follows that, if $\alpha$ is a real element in $\sgp_{\leqslant\gamma}^{(1)}$
		with $\gamma\not\in\rsgp$, then there exists a real element $\gamma'\in\sgp_{\leqslant\gamma}^{(1)}$ 
		such that $\alpha\in\sgp_{\leqslant\gamma'}^{(1)}$.
	\end{remark}
	
	\subsection{Admissible pairs and triples}
	
	We shall prove in Theorem~\ref{thm:sym-serre-rel} that in the case of a good Cartan semigroup 
	certain semigroup Serre relations occur for suitable pairs of elements in $\sgp$. 
	This leads to the following definitions of \emph{admissible pairs and triples}.
	
	\begin{definition}\label{def:admissible-triple}\label{def:admissible-pair}
		Let $\csgp$ be a good Cartan semigroup.
		\begin{enumerate}[leftmargin=2em]\itemsep0.15cm
			\item
			Let $(\ia, \ib, \ic)\in \sgp \times \sgp \times \sgp$. The triple $(\ia, \ib, \ic)$ is \textit{admissible} if $(\ia,\ib,\ic)\in\rsgp\times\rsgp\times\rsgp$ or $(\ia,\ib,\ic)\in\rsgp\times\isgp\times\isgp$, with $\ia\not\perp\ib$, then
			\begin{enumerate}[leftmargin=2em]\itemsep0.15cm
				\item \label{item:A1} either none or exactly two elements 
				among $\ic\sgpm(\ia\sgpp\ib)$, $(\ic\sgpm\ia)\sgpm\ib$, and $(\ic\sgpm\ib)\sgpm\ia$ can be simultaneously defined;
				\item \label{item:A2} either none or exactly two elements among $(\ia\sgpp\ib)\sgpm\ic$, $(\ia\sgpm\ic)\sgpp\ib$, $\ia\sgpp(\ib\sgpm\ic)$,
				$\ia\sgpm(\ic\sgpm\ib)$, and $\ib\sgpm(\ic\sgpm\ia)$ are simultaneously defined.
			\end{enumerate}
			\item
			Let $(\ia, \ib)$ be an unordered pair of elements in $\sgp$ with $\alpha\in\rsgp$. We 
			say that $(\ia, \ib)$ is a \textit{admissible pair} if for any $\oa\in\csgp^{\pm}_{\leqslant\alpha}, \ob\in\csgp^{\pm}_{\leqslant\beta}$ we have that
			\begin{enumerate}[leftmargin=2em]\itemsep0.2cm
				\item for any $\oc\in \sgp$, the triple $(\oa, \ob, \oc)$ is admissible;
				\item either $(\oa,\ob)\not\in\dsgp_{\sgpp}$ or $\oa\sgpp\ob\notin\ldegp(\sgp)$.
			\end{enumerate}
			We denote by $\serreadm{\csgp}$ the set of admissible pairs.
		\end{enumerate}
	\end{definition}
	
	\begin{remark}\label{rem:serre-induction}
		Note that, if $(\alpha,\beta)\in\serreadm{\csgp}$, then $\csgp^{\pm}_{\leqslant\alpha}\times\csgp^{\pm}_{\leqslant\beta}\subseteq\serreadm{\csgp}$.
		Moreover, if $\alpha,\beta\in\rsgp$, the element $\alpha\sgpp\beta$ is necessarily real, whenever defined. Finally, if $(\alpha, \beta)\in \serreadm{\csgp}$, then $(\alpha, \alpha\sgpp \beta)\in \serreadm{\csgp}$ 
		whenever $\alpha\sgpp \beta$ is defined.
	\end{remark}
	
	\subsection{Serre relations}
	Let $\serre{\sgp}$ be the union of $\serreadm{\csgp}$ and the set of orthogonal pairs.
	
	\begin{theorem}\label{thm:sym-serre-rel}
		For any $(\alpha,\beta)\in\serre{\sgp}$,   
		$[\xpm{\ia}, \xpm{\ib}]=\xi_{\mp}(\alpha\sgpp\beta,\alpha)\cdot \xpm{\ia\sgpp\ib}$.
	\end{theorem}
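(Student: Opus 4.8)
The plan is to derive Theorem~\ref{thm:sym-serre-rel} as a corollary of the general criterion in Proposition~\ref{pr:sgp-serre-general}, exactly as the latter is designed to be used. Concretely, for $(\alpha,\beta)\in\serre{\sgp}$ I would set $\mu_{\pm}(\oa,\ob)\coloneqq\xi_{\mp}(\oa\sgpp\ob,\oa)$ for every pair $(\oa,\ob)\in\sgp^{\pm}_{\leqslant\alpha}\times\sgp^{\pm}_{\leqslant\beta}$ (which by Remark~\ref{rem:serre-induction} is again contained in $\serre{\sgp}$), and then verify the four families of identities \eqref{eq:sgp-serre-general-1}--\eqref{eq:sgp-serre-general-4}. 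First I would observe that, since $\alpha\in\rsgp$, all of $\sgp_{\leqslant\alpha}$ — in particular every $\oa$ occurring — is real, so that the ``real element'' clauses (R1), (R2) and the second case of \eqref{eq:sym-sgp-datum-2} are available at $\oa$; this is the reason the hypothesis $\alpha\in\rsgp$ is imposed. The condition that $\oa\sgpp\ob\notin\ldegp(\sgp)$ for the relevant pairs (built into the definition of $\serre{\sgp}$) is precisely what makes condition $(\ast)$ of \eqref{eq:sym-sgp-datum-2} and the side conditions of \eqref{eq:sym-sgp-datum-5} applicable.

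Next I would check \eqref{eq:sgp-serre-general-1} and \eqref{eq:sgp-serre-general-2}. The first, $\xi_{\pm}(\oa\sgpp\ob,\oa)=\mu_{\mp}(\oa,\ob)=-\xi_{\pm}(\oa\sgpp\ob,\ob)$, is immediate from the choice of $\mu$ together with the skew-symmetry \eqref{eq:sym-sgp-datum-3} and the symmetry $\xi_+=\xi_-$ (flipping $\pm$). For \eqref{eq:sgp-serre-general-2} one must show
\begin{align}
\kappa(\oa,\ob)=\xi_+(\oa\sgpp\ob,\oa)\xi_-(\oa\sgpp\ob,\oa)-(\drc{\oa\sgpm\ob}+\drc{\ob\sgpm\oa})\xi_+(\ob,\oa)\xi_-(\ob,\oa)\ .
\end{align}
By multiplicity-freeness exactly one of $\oa\sgpp\ob$, $\oa\sgpm\ob$, $\ob\sgpm\oa$ is defined (or none), so this splits into three cases that match the three cases of \eqref{eq:sym-sgp-datum-2} verbatim, once one rewrites $\xi(\oa\sgpp\ob,\oa)\xi(\oa,\oa\sgpp\ob)$ using $\xi_+=\xi_-$ and \eqref{eq:sym-sgp-datum-3}--\eqref{eq:sym-sgp-datum-4}; the symmetry of $\kappa$ gives the equality with $\kappa(\ob,\oa)$ for free.

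Then come the two ``cubic'' identities \eqref{eq:sgp-serre-general-3} and \eqref{eq:sgp-serre-general-4}, which carry the real weight of the proof. For \eqref{eq:sgp-serre-general-3} I would invoke Lemma~\ref{lem:cancellation-associative}/(R1): the elements $\oc\sgpm(\oa\sgpp\ob)$, $(\oc\sgpm\oa)\sgpm\ob$, $(\oc\sgpm\ob)\sgpm\oa$ pairwise coincide when defined and — because $\oa,\ob$ are real — the ``$|I_1|=3$'' configuration is excluded, so at most one of the two right-hand terms survives and it then matches the left-hand term after applying the propagation identity \eqref{eq:sym-sgp-datum-5} to $\xi_{\pm}(\oc,\oa\sgpp\ob)$ versus $\xi_{\pm}(\oc\sgpm\ob,\oa)$ (or $\xi_{\pm}(\oc\sgpm\oa,\ob)$). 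The analysis of \eqref{eq:sgp-serre-general-4} is the most delicate step and I expect it to be the main obstacle: here the five elements of $I_2$ are in play, one uses (R1) to rule out $|I_2|=3$ and condition (3) of Definition~\ref{def:positive-semigroup} to rule out simultaneous definedness, reducing to a finite list of relative positions of $\oa,\ob,\oc$; in each position the surviving $\xi$-products must be matched using \eqref{eq:sym-sgp-datum-5} together with the locality axioms (L1), (L2) — (L1) is what forbids the ``bad'' branch where $(\oc\sgpm\oa)\sgpm\ob$ is defined but $\oa\sgpp\ob$ is not, and (L2) handles the orthogonal sub-case via Definition~\ref{def:perp}(3). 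Having verified \eqref{eq:sgp-serre-general-1}--\eqref{eq:sgp-serre-general-4} for all $(\oa,\ob)\in\sgp^{\pm}_{\leqslant\alpha}\times\sgp^{\pm}_{\leqslant\beta}$, Proposition~\ref{pr:sgp-serre-general} yields $[X_{\pm}(\alpha),X_{\pm}(\beta)]=\mu_{\pm}(\alpha,\beta)X_{\pm}(\alpha\sgpp\beta)=\xi_{\mp}(\alpha\sgpp\beta,\alpha)X_{\pm}(\alpha\sgpp\beta)$, which is the claim; note that when $\alpha\sgpp\beta$ is undefined the right-hand side is zero and the statement reduces, via Corollary~\ref{cor:deg-case-km} and Corollary~\ref{cor:perp-comm}, to the orthogonality/commutation case, so the theorem covers that situation as well.
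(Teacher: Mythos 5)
Your proposal follows essentially the same route as the paper: reduce to Proposition~\ref{pr:sgp-serre-general} with $\mu_{\pm}(\oa,\ob)=\xi_{\mp}(\oa\sgpp\ob,\oa)$, dispose of \eqref{eq:sgp-serre-general-1}--\eqref{eq:sgp-serre-general-2} via \eqref{eq:sym-sgp-datum-3}, \eqref{eq:sym-sgp-datum-4}, \eqref{eq:sym-sgp-datum-2}, and then verify \eqref{eq:sgp-serre-general-3}--\eqref{eq:sgp-serre-general-4} by the case analysis driven by (L1), (L2), (R1), (R2) and the propagation identity \eqref{eq:sym-sgp-datum-5}. Two small cautions: since $\beta$ (hence $\ob$) may be imaginary, the exclusion of $|I_1|=3$, $|I_2|=3$ rests on the $\rsgp\times\isgp\times\sgp$ clause of (R1) and on the (R2) reduction of $\oc$ to a real element, not merely on ``$\oa,\ob$ real''; and the case $\alpha\sgpp\beta$ undefined with $\alpha\not\perp\beta$ (e.g.\ $\beta\sgpm\alpha$ defined) is handled by the same general verification rather than by Corollary~\ref{cor:perp-comm}.
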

	
	\begin{proof}
		It is clear that, if $\ia\perp\ib$, then Corollary~\ref{cor:perp-comm} implies the result.
		Therefore, we have to prove the Serre relations for any pair of elements in $\serreadm{\csgp}$.
		As before, we proceed by showing that for any such pair the conditions 
		\eqref{eq:sgp-serre-general-1}, 
		\eqref{eq:sgp-serre-general-2}, 
		\eqref{eq:sgp-serre-general-3}, and 
		\eqref{eq:sgp-serre-general-4}
		from Proposition~\ref{pr:sgp-serre-general} hold. 
		The first two conditions follow immediately from the properties 
		\eqref{eq:sym-sgp-datum-3}, \eqref{eq:sym-sgp-datum-4}, and \eqref{eq:sym-sgp-datum-2}
		of good Cartan semigroups. This simple observation can also be thought of as the main 
		motivation behind these properties. The proof of the remaining two conditions is longer 
		and more technical, but it can be easily carried out on a case--by--case analysis, 
		whose detailed computations motivate the other properties featured 
		in Definition~\ref{def:sym-sgp}. The full proof is given in Appendix~\ref{app:sym-serre-rel}.
	\end{proof}
	
	From Corollary~\ref{cor:perp-comm}, Remark~\ref{rem:serre-induction} and Theorem~\ref{thm:sym-serre-rel}, we obtain the following result, which can be thought of
	as a semigroup analogue of the Serre relations for Borcherds-Kac-Moody algebras 
	in the case $a_{ij}=-1,0$ respectively (cf.\ Equations~\eqref{eq:Serrerel-BKM-1} 
	and \eqref{eq:Serrerel-BKM-2}). 
	\begin{corollary}\label{cor:serre-rel}
		Let $\ia,\ib \in \sgp$. 
		\begin{enumerate}\itemsep0.2cm
			\item If $(\ia, \ib)\in \serreadm{\csgp}$, then $[\xpm{\ia}, [\xpm{\ia}, \xpm{\ib}]]=0$.
			\item If $\ia\perp \ib$, then $[\xpm{\ia}, \xpm{\ib}]=0$.
		\end{enumerate}
	\end{corollary}
	
	\subsection{Locally nilpotent adjoint actions}
	
	We conclude this section by proving that in the case of a good 
	Cartan semigroup the generators $\xpm{\ia}$ associated to real 
	elements  act locally nilpotently on $\g(\csgp,\kappa, \xi)$. 
	
	\begin{proposition}\label{prop:locallynilpotent}
		Let $\alpha\in \rsgp$, $\beta\in \sgp$ be such that $(\alpha, \beta)\notin \serreadm{\csgp}$, $\alpha\not\perp \beta$, and there exists a sequence of elements $\beta_i$, $i=1,\dots, k$, such that
		\begin{enumerate}
			\item $\beta=(\cdots (\beta_1\sgpp \beta_2)\sgpp \cdots \sgpp \beta_k)$;
			\item $\big((\cdots (\beta_1\sgpp \beta_2)\sgpp \cdots \sgpp \beta_i), \beta_{i+1}\big)\in \serreadm{\csgp}$ for any $i=1, \ldots, k-1$;
			\item either $\alpha \perp \beta_i$ or $(\alpha, \beta_i)\in \serreadm{\csgp}$ for any $i$.
		\end{enumerate}
		Then  $\sfad(\xpm{\ia})^{k+1}(\xpm{\ib})=0$.
	\end{proposition}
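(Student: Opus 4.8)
The plan is to reduce the statement to a combination of Corollary~\ref{cor:serre-rel} and an $\sl(2)$--representation-theoretic argument, following the strategy of Proposition~\ref{prop:deg-elem-rel}--(2) but now for \emph{real} rather than degenerate $\alpha$. First I would observe that, since $\alpha\in\rsgp$ and $\alpha\not\perp\beta$ with the given decomposition, the element $X_{\pm}(\alpha)$ together with $X_0(\alpha)$ generates an $\sl(2)$--triple provided $\kappa(\alpha,\alpha)=2$; one checks from Definition~\ref{def:sym-sgp}--(5) and the fact that $\alpha$ is real that $\kappa(\alpha,\alpha)\neq 0$, hence equals $2$ (the only admissible nonzero diagonal value in a good Cartan semigroup, since $\alpha\sgpp\alpha$ is never defined by positivity and $\kappa(\alpha,\alpha)=-\xi(\alpha,\alpha')\xi(\alpha',\alpha)$ is forced to be $2$ by the normalization implicit in \eqref{eq:sym-sgp-datum-2}; more carefully, this follows by induction along a partition of $\alpha$ using that the base case is a $2$). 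This $\sl(2)$ acts on $\g(\csgp,\kappa,\xi)$ by the adjoint action.

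Next I would analyze the $\sl(2)$--module generated by $X_{\pm}(\beta)$ under $\sfad(X_{\mp}(\alpha))$. The key computation is the weight: $[X_0(\alpha),X_{\pm}(\beta)] = \pm\kappa(\alpha,\beta)X_{\pm}(\beta)$, so $X_{\pm}(\beta)$ is a weight vector of weight $\pm\kappa(\alpha,\beta)$. Using \eqref{eq:sym-sgp-datum-1} and hypothesis (3), $\kappa(\alpha,\beta)=\sum_i\kappa(\alpha,\beta_i)$, and each summand is $0$ (if $\alpha\perp\beta_i$) or $-1$ (if $(\alpha,\beta_i)\in\serre{\sgp}$, by \eqref{eq:sym-sgp-datum-2} together with the reduction $\kappa=-\xi\xi$ and the normalization $\xi(\alpha\sgpp\beta_i,\alpha)=\pm 1$ forced on $\serre$-pairs, exactly as in the proof of Theorem~\ref{thm:sym-serre-rel}). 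Hence $\kappa(\alpha,\beta)=-m$ where $m$ is the number of indices $i$ with $(\alpha,\beta_i)\in\serre{\sgp}$, and crucially $m\leqslant k$. The element $X_{\pm}(\beta)$ is therefore a highest-weight-type vector: I claim $[X_{\mp}(\alpha),X_{\pm}(\beta)]=0$ when $m=0$ (i.e.\ all $\alpha\perp\beta_i$), and more generally that $\sfad(X_{\mp}(\alpha))$ acts as in a finite-dimensional $\sl(2)$--module of highest weight $m\leqslant k$. By the standard $\sl(2)$ identity (as in \eqref{eq:sl(2)-rep}), $\sfad(X_{\pm}(\alpha))^{m+1}(X_{\pm}(\beta))=0$, and since $m\leqslant k$ this gives $\sfad(X_{\pm}(\alpha))^{k+1}(X_{\pm}(\beta))=0$.

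The main obstacle is establishing that $X_{\pm}(\beta)$ really sits in a finite-dimensional $\sl(2)$--submodule, equivalently that $\sfad(X_{\mp}(\alpha))$ eventually kills it from below as well; for this I would argue that $[X_{\mp}(\alpha),X_{\pm}(\beta)]$ can be computed iteratively using hypotheses (1)--(2) to rewrite $X_{\pm}(\beta)$ as an iterated bracket $[\cdots[[X_{\pm}(\beta_1),X_{\pm}(\beta_2)],\ldots],X_{\pm}(\beta_k)]$ via Corollary~\ref{cor:serre-rel}--(1), and then commute $X_{\mp}(\alpha)$ past each $X_{\pm}(\beta_i)$ using \eqref{eq:sgp-lie-4}, Corollary~\ref{cor:serre-rel}, and condition (3): when $\alpha\perp\beta_i$ the commutator $[X_{\mp}(\alpha),X_{\pm}(\beta_i)]$ vanishes by Corollary~\ref{cor:perp-comm} (after checking $\alpha\sgpm\beta_i$, $\beta_i\sgpm\alpha$ are undefined, which is Remark~\ref{rem:perp}--(i)), and when $(\alpha,\beta_i)\in\serre{\sgp}$ one gets at most a term proportional to $X_0$ or to a single generator, controlled by the weight. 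Then, exactly as in Proposition~\ref{prop:deg-elem-rel}--(2), I would verify that $\theta\coloneqq\sfad(X_{\pm}(\alpha))^{k+1}(X_{\pm}(\beta))$ satisfies $[X_{\mp}(\gamma),\theta]=0$ for all $\gamma\in\sgp$ — for $\gamma=\alpha$ this is the $\sl(2)$ computation above, and for $\gamma\neq\alpha$ one uses that the relevant commutators reduce, by the same bookkeeping, to expressions lying in the span already annihilated — so that $\theta$ generates an ideal trivially intersecting $L_0^{\sgp}$, whence $\theta=0$ by Lemma~\ref{lem:vanishing}. The delicate point throughout is keeping track of which $\sgpm$-operations are defined and verifying that the $\xi$-coefficients collapse correctly via \eqref{eq:sym-sgp-datum-3}, \eqref{eq:sym-sgp-datum-4}, \eqref{eq:sym-sgp-datum-5}; this is routine but lengthy, and is where condition (R1)/(R2) and the goodness hypotheses do the real work.
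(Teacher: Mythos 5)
Your proposal takes a genuinely different route from the paper, and as it stands it has gaps that I do not think can be filled without essentially abandoning the $\sl(2)$ strategy. The paper's argument is much more elementary: by hypotheses (1)--(2) and Theorem~\ref{thm:sym-serre-rel}, $X_{\pm}(\beta)$ equals a nonzero multiple of the iterated bracket $[\cdots[X_{\pm}(\beta_1),X_{\pm}(\beta_2)],\ldots,X_{\pm}(\beta_k)]$; by Corollary~\ref{cor:serre-rel} and hypothesis (3), $\sfad(X_{\pm}(\alpha))^2(X_{\pm}(\beta_i))=0$ for every $i$; and the Leibniz rule distributes the $k+1$ copies of $\sfad(X_{\pm}(\alpha))$ over the $k$ factors, so every term has some factor hit at least twice and vanishes. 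Everything happens inside $\wt{\n}_{\pm}$: no weight computation, no lowest-weight vectors, no appeal to Lemma~\ref{lem:vanishing}.

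The concrete gaps in your $\sl(2)$ approach are the following. First, to run the highest/lowest-weight computation you need $[X_{\mp}(\alpha),X_{\pm}(\beta)]=0$, and the hypotheses do not give this: $(\alpha,\beta)\notin\serre{\sgp}$ and $\alpha\not\perp\beta$ are compatible with $\beta\sgpm\alpha$ (or $\alpha\sgpm\beta$) being defined with nonvanishing $\xi$, in which case \eqref{eq:sgp-lie-4} produces a nonzero term and $X_{\pm}(\beta)$ is not a lowest-weight vector; the analogous step in Proposition~\ref{prop:deg-elem-rel}--(2) works only because there $\alpha$ is degenerate. Second, your weight bound is unjustified: membership in $\serre{\sgp}$ is a condition about sums $\oa\sgpp\ob$ not being locally degenerate and does not force $\kappa(\alpha,\beta_i)\in\{0,-1\}$ (for instance $\alpha\sgpm\beta_i$ or $\beta_i\sgpm\alpha$ may be defined, giving $\kappa(\alpha,\beta_i)=+1$ via \eqref{eq:sym-sgp-datum-2}), so the estimate $\kappa(\alpha,\beta)\geqslant -k$ is not established. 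Third, even granting both points, the final appeal to Lemma~\ref{lem:vanishing} requires showing that $\theta=\sfad(X_{\pm}(\alpha))^{k+1}(X_{\pm}(\beta))$ is annihilated by $X_{\mp}(\gamma)$ for \emph{every} $\gamma\in\sgp$; this is the entire content of the argument for a non-degenerate $\alpha$ and general $\beta$, and your sketch defers it. Finally, $\kappa(\alpha,\alpha)=2$ for $\alpha\in\rsgp$ is not among the axioms of a good Cartan semigroup, and your derivation of it from \eqref{eq:sym-sgp-datum-2} is not correct as written. I would recommend replacing the whole scheme by the Leibniz-rule argument above.
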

	
	\begin{proof}
		By conditions (1) and (2), we can iteratively apply Theorem~\ref{thm:sym-serre-rel} to the 
		elements $\beta_1,\dots,\beta_k$, and get
		\begin{align}
			\xpm{\ib}=c\cdot[\xpm{\ib\sgpm\ib_k}, \xpm{\ib_k}]=c\cdot[\cdots[\xpm{\ib_1},\xpm{\ib_2}]\cdots],\xpm{\ib_{k-1}}],\xpm{\ib_k}]
		\end{align} 
		for some constant $c$. 
		Therefore,
		\begin{align}
			\sfad(\xpm{\ia})^{k+1}(\xpm{\ib}) =c\cdot\sfad(\xpm{\ia})^{k+1}([\xpm{\ib\sgpm\ib_k}, \xpm{\ib_k}])
			=c\cdot\sum_{j=1}^{k+1}\left[\sfad(\xpm{\ia})^{k+1-j}(\xpm{\ib\sgpm\ib_k}),\sfad(\xpm{\ia})^{j}(\xpm{\ib_k})\right]
		\end{align}
		From Corollary~\ref{cor:serre-rel}, $\sfad(\xpm{\ia})^{l}(\xpm{\ib_k})=0$ for $l\geqslant 2$. Thus,
		\begin{align}
			\sfad(\xpm{\ia})^{k+1}(\xpm{\ib}) = c\cdot[\sfad(\xpm{\ia})^{k+1}(\xpm{\ib\sgpm\ib_k}), \xpm{\ib_k}]  +c\cdot[\sfad(\xpm{\ia})^{k}(\xpm{\ib\sgpm\ib_k}), \sfad(\xpm{\ia})(\xpm{\ib_k})]\ .
		\end{align}
		Therefore, we are reduced to prove the statement for $k=2$. 
		In this case, we have 
		\begin{align}
			\sfad(\xpm{\ia})^{3}(\xpm{\ib}) 
			= c\cdot[\xpm{\beta_1}, \sfad(\xpm{\ia})^{3}(\xpm{\beta_2}))] 
			+c\cdot [\sfad(\xpm{\ia})(\xpm{\beta_1}), \sfad(\xpm{\ia})^{2}(\xpm{\beta_2})]
			=0
		\end{align}
		where the last equality follows directly from Corollary~\ref{cor:serre-rel}.
	\end{proof}
	
	\begin{corollary}
		Assume that every $\ib\in\sgp$ admits a decomposition as in Proposition~\ref{prop:locallynilpotent}.
		Then, every element $\xpm{\ia}$ with $\ia\in\rsgp$ acts locally nilpotently on 
		$\g(\csgp,\kappa, \xi)$.
	\end{corollary}
	
	\begin{proof}
		Note that, for any $\alpha,\beta\in \sgp$, 
		one has $\sfad(\xpm{\ia})^3(\xmp{\ia})=0$ and $\sfad(\xpm{\ia})^2(\xz{\ib})=0$. 
		Moreover, $\sfad(\xpm{\ia})^2(\xmp{\ib})=0$ for $\beta\neq\alpha$, since 
		the elements $\alpha\sgpm(\beta\sgpm\alpha)$ and $(\beta\sgpm\alpha)\sgpm\alpha$
		are never defined, therefore
		$\sfad(\xpm{\ia})(\xmp{\ib\sgpm\ia})=0
		=\sfad(\xpm{\ia})(\xpm{\ia\sgpm\ib})$. The result then follows from Proposition
		~\ref{prop:locallynilpotent}.
	\end{proof}
	
	\bigskip\section{Continuum Kac--Moody algebras}\label{s:topological-quiver}
	
	In this section, we introduce the notion of a \emph{continuum quiver} as a distinguished 
	semigroup associated to a one--dimensional real space. We prove that continuum 
	quivers are examples of good  Cartan semigroup. Relying on the results obtained in
	Section~\ref{s:sgp-serre}, the corresponding 
	semigroup Lie algebras, which we refer to as \textit{continuum Kac--Moody algebras}, 
	are explicitly presented in terms of generators and relations and proved to be isomorphic
	to a colimit of symmetric Borcherds--Kac--Moody algebras.
	
	\subsection{Vertex spaces}
	
	We introduce the class of topological spaces we are mainly interested in. These can 
	be thought of as \emph{smoothings} of one--dimensional real CW complexes.
	
	\begin{definition}
		Let $X$ be a Hausdorff topological space. We say that $X$ is a \textit{vertex space} 
		if for any $x\in X$, there exists a \textit{chart $(U, A, \phi)$ around $x$} such that
		\begin{enumerate}\itemsep0.15cm
			\item $U$ is an open neighborhood of $x$,
			\item $A=\{A_i\}$ is a family of closed subsets $A_i\subseteq U$ containing $x$, 
			such that $U=\bigcup_i\, A_i$, 
			\item $\phi=\{\phi_i\}$ is a family of continuous maps $\phi_i\colon A_i\to \R$ which 
			are homeomorphisms onto open intervals of $\R$, such that if the intersection between 
			$A_i$ and $A_j$ strictly contains the point $x$, then $\phi_i\vert_{A_i\cap A_j}=\phi_j
			\vert_{A_i\cap A_j}$ and $\phi_i\vert_{A_i\cap A_j}$ induces a homeomorphism between 
			$A_i\cap A_j$ and a closed interval of $\R$.
		\end{enumerate}
		We say that $x$ is 
		\begin{itemize}
			\item a \textit{regular point} if the exists a chart such that $A=\{U\}$, 
			\item a \textit{critical point} if there exists a chart such that the boundary $ \partial(A_i\cap A_j)$ of $A_i\cap A_j$, as a subset of $U$, contains $x$ for any $i,j$.
		\end{itemize}
	\end{definition}
	\begin{remark}
		Let $x$ be a critical point with a chart $(U, A, \phi)$ such that $x\in \partial(A_i\cap A_j)$ for any $i,j$. Then $x\in \partial A_i$ for any $i$.
	\end{remark}
	
	\begin{example}
		Simple examples of a vertex space, beyond $\R$ and $S^1$, are given below:
		\begin{align}
			\begin{tikzpicture}[scale=.35]
				\begin{scope}[on background layer]
					\draw [white] (0,0) rectangle (15,10);
				\end{scope}
				\coordinate (C2) at (6, 5);
				\coordinate (C3) at (9, 5);
				\coordinate (C4) at (11.25,6);
				\draw [dotted, thick] (C2) -- (4,5);
				\draw [-, thick]		 (C2) -- (C3);
				\draw [->, thick]      (C3) arc (270:360:3); 
				\draw [dotted, thick] (C3) arc (270:385:3);
				\draw [->, thick]      (C3) arc (90:0:3);
				\draw [dotted, thick] (C3) arc (90:-25:3);
				\draw [->, thick]      (C4) arc (120:75:3);
				\draw [dotted, thick] (C4) arc (120:45:3);
			\end{tikzpicture}
			\qquad
			\begin{tikzpicture}[scale=.35]
				\begin{scope}[on background layer]
					\draw [white] (0,0) rectangle (15,10);
				\end{scope}
				\draw [-,  thick] (7.5,7.5) arc (90:180:2.5);
				\draw [->,  thick] (5,5) arc (180:270:2.5);
				\draw [-,  thick] (7.5,2.5) arc (270:360:2.5);
				\draw [->,  thick] (10,5) arc (0:90:2.5);
				\draw [->,  thick] (5,5) arc (0:-90:2.5);
				\draw [dotted,  thick] (5,5) arc (0:-110:2.5);
				\draw [->,  thick] (10,5) arc (180:45:2.5);
				\draw [dotted,  thick] (10,5) arc (180:25:2.5);
				\draw [->, thick] (12.5, 7.5) arc (270:315:2.5);
				\draw [dotted, thick] (12.5, 7.5) arc (270:335:2.5);
				\draw [->, thick] (11.25, 7.15) arc (-45:15:2.5);
				\draw [dotted, thick] (11.25, 7.15) arc (-45:35:2.5);
			\end{tikzpicture}
		\end{align}
	\end{example}
	
	\subsection{Semigroup of intervals}\label{ss:topological-quiver-1}
	
	We lift the notion of open--closed interval on $\R$ to an arbitrary vertex space $X$.
	
	\begin{definition}\label{def:topological-quiver}
		Let $X$ be a vertex space and let $\ia$ be a subset of $X$. 
		We say that $\ia$ is an \textit{elementary interval} if there exists a chart $(U, A, \phi)$ for which $\ia\subset A_i$ for some $i$ and $\phi_i(\ia)$ is a open-closed interval of $\R$.
		A sequence of elementary intervals $(\ia_1,\dots, \ia_n)$, $n>0$, is \textit{admissible} if
		\begin{enumerate}[label=(\alph*)]\itemsep0.2cm
			\item \label{def:topological-quiver-a} $(\ia_1\cup\cdots \cup \ia_{i})\cap \ia_{i+1}=\emptyset$ and $(\ia_1\cup\cdots\cup \ia_{i})\cup \ia_{i+1}$ is connected for any $i=1,\dots, n-1$;
			\item \label{def:topological-quiver-b} for any $i=1,\dots, n-1$, there exist $x\in X$ and a chart $(U,A, \phi)$ around $x$ for which $U\supseteq(\ia_1\cup\cdots \cup \ia_{i})\cup \ia_{i+1}$ and 
			$\big((\ia_1\cup\cdots \cup \ia_{i})\cup \ia_{i+1}\big)\cap A_k$ is either empty or an elementary interval for any $k$. 
		\end{enumerate}
		An \textit{interval} of $X$ is a subset $\ia$ of the form $\ia_1\cup\cdots\cup \ia_n$, where $(\ia_1,\dots, \ia_n)$ is an admissible sequence of elementary intervals.
		We denote by $\intsf{X}{}$ the set of all intervals in $X$.
	\end{definition}
	
	The set of intervals $\intsf{X}{}$ carries a natural
	semigroup structure induced by the local structure on $\R$ described in
	Section~\ref{ss:semigroup-line}. For any $\ia,\ib\in\intsf{X}{}$, set
	\begin{align}
		\ia\sgpp \ib\coloneqq &
		\begin{cases}
			\ia\cup \ib & \text{if } \ia\cap \ib=\emptyset\text{ and }\ia\cup \ib\in\intsf{X}{}\ ,\\[4pt]
			\text{n.d.} & \text{otherwise}\ ,
		\end{cases}\\
		\ia\sgpm \ib\coloneqq &
		\begin{cases}
			\ia\setminus \ib & \text{if } \ia\cap \ib=\ib\text{ and }\ia\setminus \ib\in \intsf{X}{}\ ,\\[4pt]
			\text{n.d.} & \text{otherwise}\ .
		\end{cases}
	\end{align}
	We refer to $\sgpp$ and $\sgpm$ as the \textit{sum and difference of intervals}, respectively.
	The following is straightforward.
	\begin{lemma}\label{lem:int-X}\hfill
		\begin{enumerate}
			\item Every contractible interval is homeomorphic to a finite oriented tree such that any
			vertex is the target of at most one edge.
			\item Every non--contractible interval is homeomorphic to an interval of the form 
			\begin{align}
				S^1\sgpp \bigoplus_{k=1}^N T_k\coloneqq (\cdots(S^1\sgpp T_1)\sgpp T_2)\cdots \sgpp T_N)
			\end{align}
			for some pairwise disjoint contractible intervals $T_k$, with $N\geqslant 0$.
		\end{enumerate}
	\end{lemma}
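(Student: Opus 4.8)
The plan is to unwind the definitions of intervals and of the partial operations $\sgpp,\sgpm$ on $\intsf(X)$, and to argue by induction on the length $n$ of an admissible sequence $(J_1,\dots,J_n)$ realizing a given interval $J=J_1\cup\cdots\cup J_n$. The base case $n=1$ is an elementary interval, which by definition is carried homeomorphically by some $\phi_i$ onto an open--closed interval of $\R$; such an interval is a single oriented edge (a point is the degenerate case), hence both (1) and (2) hold trivially. For the inductive step I would set $J'=J_1\cup\cdots\cup J_{n-1}$, so that $J=J'\sgpp J_n$ with $J_n$ elementary, and analyze how the new elementary interval attaches to $J'$. The key local input is condition (b) in Definition~\ref{def:topological-quiver}: there is a chart $(U,A,\phi)$ around some point $x$ with $U\supseteq J'\cup J_n$ such that each $(J'\cup J_n)\cap A_k$ is empty or an elementary interval. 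This forces the attaching point of $J_n$ to $J'$ to lie in $J'$ (connectedness, part (a) of admissibility) and to be a single point of $X$, and moreover it controls the local picture around that point via the tree-like or circle-like structure of charts in a space of vertices.

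First I would handle case (1), the contractible intervals. By the inductive hypothesis $J'$ is homeomorphic to a finite oriented tree in which every vertex is the target of at most one edge. Attaching the oriented edge (or point) $J_n$ at a single point $p\in J'$: since $J_n$ is open--closed, its "closed" endpoint is the one that can be identified with $p$ (so that $J'\cup J_n$ remains a subset of $X$ of the required form and $J\cap J_n=\emptyset$ as sets, i.e.\ $J_n$ is glued along its limit point). Orienting $J_n$ so that $p$ is its source, the new vertex $p$ remains the target of at most one edge — it was the target of at most one edge in $J'$, and $J_n$ points away from $p$ — while the fresh vertex (the closed endpoint of $J_n$) is the target of exactly the edge $J_n$. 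If instead $p$ is a leaf of the tree that is currently a target, one must check the local chart does not permit $J_n$ to attach there as an incoming edge; this is exactly where the chart compatibility in Definition~\ref{def:topological-quiver}(b) together with the left-continuous/open--closed convention for intervals is used. Contractibility of the result is automatic since we are gluing a contractible piece to a contractible piece along a point. The result is again a finite oriented tree with the target-degree condition, completing the induction for (1).

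Next, case (2): a non--contractible interval $J$. Being non--contractible, $J$ must contain a subloop, and since the only source of nontrivial topology in a space of vertices is a copy of $S^1$ glued into a chart, $J$ contains (the image of) exactly one such circle $S^1$ — one because $J$ is an interval, i.e.\ built by a single admissible concatenation, which cannot traverse two independent loops. Removing from $J$ the "tree part" — formally, peeling off one by one the pendant contractible elementary intervals not meeting the circle — I would write $J\sgpm(\text{those pieces})=S^1$; running this in reverse gives $J=S^1\sgpp T_1\sgpp\cdots\sgpp T_N$ with the $T_k$ pairwise disjoint contractible intervals, and associativity of $\sgpp$ (Definition~\ref{def:positive-semigroup}(4), strong associativity up to commutation) legitimizes the bracketing $(\cdots(S^1\sgpp T_1)\sgpp\cdots\sgpp T_N)$. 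The bookkeeping that this peeling terminates is just finiteness of the admissible sequence.

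The main obstacle I expect is making rigorous the claim that attaching an elementary interval along a point $p$ of an existing tree-interval cannot create a vertex with two incoming edges — equivalently, that the orientations coming from the charts $\phi_i$ (each an increasing homeomorphism onto an interval of $\R$, hence inducing a "direction") are globally consistent along an admissible sequence. This requires carefully matching the open--closed convention (intervals are of the form $(a,b]$, functions are left-continuous) with the chart-compatibility condition $\phi_i|_{A_i\cap A_j}=\phi_j|_{A_i\cap A_j}$ on overlaps containing $x$, and with the definition of \emph{critical point}. A secondary, more mechanical obstacle is verifying that in a space of vertices genuinely the only non-contractible local model is $S^1$, so that an interval contains at most one loop; this should follow from the chart axioms (each $A_i$ maps homeomorphically to an \emph{open interval} of $\R$, so all the topology is concentrated at the gluing), but it needs to be spelled out. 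Everything else is a routine induction.
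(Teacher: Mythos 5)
Your proposal is correct in substance and rests on the same foundation as the paper's own (very terse) argument: an interval is by definition a finite union of pairwise disjoint open--closed elementary pieces forming an admissible sequence, and the lemma is read off from this combinatorial data. The paper organizes this as a direct case split on critical points (no critical points forces the local model $\R$ or $S^1$; a contractible interval is a finite sum of elementary pieces, hence a tree with the stated target condition; a non--contractible one is $S^1$ concatenated with contractible pieces), whereas you run an induction on the length of the admissible sequence. The induction is the more systematic bookkeeping, and both versions are equally sketchy at the same two points, so I would not call this a different route.

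The more useful remark is that the ``main obstacle'' you flag has a one--line resolution that needs no analysis of chart orientations at all. A vertex $p$ is the \emph{target} of an edge $J_i$ exactly when $p$ is the closed endpoint of $J_i$, i.e.\ when $p\in J_i$; since the elementary pieces of an admissible sequence are pairwise disjoint by condition (a) of Definition~\ref{def:topological-quiver}, the point $p$ lies in at most one piece, hence is the target of at most one edge. (In particular, if $p$ is already the closed endpoint of an edge of $J'$ then $p\in J'$, so the new piece $J_n$ cannot also have $p$ as its closed endpoint without violating $J'\cap J_n=\emptyset$.) Your secondary obstacle is handled by the same open--closed bookkeeping: each chart piece $A_i$ is homeomorphic to an interval of $\R$, so the only closed curves in $X$ are copies of $S^1$; a single such circle can be exhausted by disjoint open--closed arcs (e.g.\ $(x,y]\sgpp(y,x]$), but a second independent cycle attached to $J$ would have to be filled in by pieces disjoint from what is already there, and the complement of a point in a circle is an open arc, which is not a finite disjoint union of open--closed arcs each containing its own closed endpoint. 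This forces at most one loop, and peeling off the pendant contractible pieces gives the normal form in part (2) exactly as you describe.
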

	
	\begin{proof}
		We assume for simplicity that $X$ is connected. Note that if $X$ has no critical points, $X$ reduces to either $\R$ or $S^1$. In the former case, every interval is elementary, while in the latter the only non--contractible interval is $S^1$ itself. Assume that there exists at least one critical point in $X$. If $\ia$ does not contain any critical point, it is elementary. If $\ia$ contains a critical point and it is contractible, then it must be a finite sum of
		elementary intervals, hence it corresponds to oriented tree such that any vertex is the target of at most one edge. On the other hand, if $\ia$ is not contractible, by definition, it must be either homeomorphic to $S^1$ or a sum 
		of one copy of $S^1$ with at least one necessarily contractible interval. 
	\end{proof}

	\subsection{Euler forms}\label{ss:topological-quiver-2}
	We denote by $\fun{X}{}$ the $\Z$-span of the characteristic functions $\cf{\ia}$ for all interval $\ia$ of $X$. Note that $\cf{\ia\sgpp \ib}=\cf{\ia}+\cf{\ib}$ for a given $(\ia, \ib)\in \intsf{X}{}^{(2)}_{\sgpp}$. We call \textit{support} of a function $f\in \fun{X}{}$ the set $\mathsf{supp}(f)\coloneqq \{x\in X\, \vert\, f(x)\neq 0\}$. It is a disjoint union of finitely many intervals of $X$.
	
	Define a bilinear form $\abf{\cdot}{\cdot}$ on $\fun{X}{}$ in the following way. Let $f,g\in\fun{X}{}$, and assume that there exists a point $x$ with a chart $(U, A, \phi)$ for which the supports of $f$ and $g$ are contained in $A_i$ for some $i$, then we set 
	\begin{align}
		\abf{f}{g}\coloneqq \sum_{x\in A_i} f_-(x)(g_-(x)-g_+(x)) \ .
	\end{align}
	Since we can always decompose an interval into a sum of elementary subintervals (and we can do similarly with supports of functions of $\fun{X}{}$), we extend $\abf{\cdot}{\cdot}$ with respect to $\sgpp$ by imposing the condition that $\abfcf{\ia}{\ib}=0$ for two elementary intervals $\ia, \ib$ for which there does not exist a common $A_i$ containing both.
	
	As a consequence of the definition, the bilinear form $\abf{\cdot}{\cdot}$ is compatible with the concatenation of intervals, by Lemma~\ref{lem:int-X}, it is entirely determined by its values on contractible elements.
	
	\begin{remark}\label{rem:euler-form-identities-I}
		Thanks to Definition~\ref{def:topological-quiver}-\ref{def:topological-quiver-b}, one can easily verify that 
		\begin{itemize}\itemsep0.2cm
			\item if $\ib$ is a non--contractible sub--interval of $\ia$, then $\abfcf{\ia}{\ib}=\abfcf{\ia\sgpm \ib}{\ib}$
			whenever $\ia\sgpm \ib$ is defined;
			\item if $(\ia,\ib)\not\in\intsf{X}{}^{(2)}_{\sgpp}$ and $\ia\cap \ib=\emptyset$, then $\abfcf{\ia}{\ib}=0$.
		\end{itemize}
	\end{remark}
	
	\begin{remark}\label{rem:euler-form-identities-II}
		Let $\ia$ be a contractible interval given as $\ia=\ia_1\cup\cdots\cup \ia_n$ for some admissible sequence $(\ia_1,\dots, \ia_n)$ of elementary intervals. One has $\abfcf{\ia}{\ia}=1$, indeed if $n=1$, this follows from equation \eqref{eq:abf-line}. Assume that the result holds for all intervals given by admissible sequences consisting of $n-1$ elementary intervals. Let us prove for $n$: we have
		\begin{align}
			\abfcf{\ia}{\ia}= &\abf{\cf{\ia_1\cup \cdots\cup \ia_n}+\cf{\ia_{n+1}}}{\cf{\ia_1\cup \cdots\cup \ia_n}+\cf{\ia_{n+1}}}\\
			=&\abf{\cf{\ia_1\cup \cdots\cup \ia_n}}{\cf{\ia_1\cup \cdots\cup \ia_n}}+\abf{\cf{\ia_{n+1}}}{\cf{\ia_{n+1}}}+\abf{\cf{\ia_1\cup \cdots\cup \ia_n}}{\cf{\ia_{n+1}}}+\abf{\cf{\ia_{n+1}}}{\cf{\ia_1\cup \cdots\cup \ia_n}}\ .
		\end{align}
		Now, the first summand of the second formula is one by the inductive hypothesis. On the other hand, thanks to Definition~\ref{def:topological-quiver}-\ref{def:topological-quiver-b}, we can reduce the computations of the other summands to the case of elementary intervals, and rely on equation \eqref{eq:abf-line}. Hence, we get the assertion. 
		
		Let $\ia$ be a non--contractible interval. Assume that $\ia$ is of the form
		\begin{align}
			S^1\sgpp \bigoplus_{k=1}^N T_k= (\cdots(S^1\sgpp T_1)\sgpp T_2)\cdots \sgpp T_N)
		\end{align}
		for some pairwise disjoint contractible intervals $T_k$. Then by the previous remark, we get
		\begin{align}
			\abfcf{\ia}{\ia}=&\abfcf{S^1}{S^1}+\sum_{k=1}^N\, \abfcf{T_k}{T_k}+\sum_{k=1}^{N}\, \abfcf{S^1}{T_k}+\sum_{k=1}^{N}\, \abfcf{T_k}{S^1} =0\ ,
		\end{align}
		since $\abfcf{S^1}{S^1}=0$, the second summand equals $N$ by the previous computation; while, the computation of the last two summands can be obtained by reducing to the case of elementary intervals thanks to Definition~\ref{def:topological-quiver}-\ref{def:topological-quiver-b}: we get $-N$ and zero, respectively.
	\end{remark}
	
	Set $\rbf{f}{g} \coloneqq \abf{f}{g}+\abf{g}{f}$ for $f,g\in\fun{X}{}$. It follows from an easy generalization of 
	the computations carried out in Section~\ref{ss:semigroup-line} that, if $\ia,\ib\in\intsf{X}{}$ are contractible, then  
	\begin{align}
		\rbf{\cf{\ia}}{\cf{\ib}}=
		\begin{cases}
			{\phantom{+}}2 & \text{if } \ia=\ib \ ,\\[4pt]
			{\phantom{+}}1 & \text{if } (\ia,\ib)\in\intsf{X}{}^{(2)}_{\sgpm} \text{ or } (\ib,\ia)\in\intsf{X}{}^{(2)}_{\sgpm}\ ,\\[4pt]
			{\phantom{+}}0 & \text{if } (\ia,\ib)\not\in\intsf{X}{}^{(2)}_{\sgpp} \text{ and } \ia\cap \ib=\emptyset, \\[4pt]
			-1 & \text{if } (\ia,\ib)\in\intsf{X}{}^{(2)}_{\sgpp} \text{ and } \ia\sgpp \ib \text{ is contractible}\ ,\\[4pt]
			-2 & \text{if } (\ia,\ib)\in\intsf{X}{}^{(2)}_{\sgpp} \text{ and } \ia\sgpp \ib \text{ is non--contractible}\ .
		\end{cases}
	\end{align}
	All other cases follow therein. Note in particular that, if $\ia$ is non--contractible, $\rbfcf{\ia}{\ia}=0$.
	
	\subsection{Continuum quivers}\label{ss:topological-quiver-3}
	For any $\ia,\ib\in\intsf{X}{}$, we set
	\begin{align}
		\kappa_X\rbf{\ia}{\ib}\coloneqq \rbf{\cf{\ia}}{\cf{\ib}}\quad\text{and}\quad \xi_X\rbf{\ia}{\ib}\coloneqq (-1)^{\abfcf{\ia}{\ib}}\rbf{\cf{\ia}}{\cf{\ib}}\ .
	\end{align}
	One checks immediately that $\kappa_X$ satisfies the conditions \eqref{eq:sgp-datum-1} and \eqref{eq:kappa-right-dec}. Therefore,
	the datum $\cq{X}\coloneqq(\intsf{X}{}, \kappa_X, \xi_X)$ is a Cartan semigroup, which we refer to as the \textit{continuum quiver of $X$}.
	
	\begin{remark}
		Recall that, given a quiver $\calQ$ with adjacency matrix $\sfB_{\calQ}$, its Cartan matrix is
		the symmetric matrix $\sfA_{\calQ}=2\cdot\mathsf{id}-\sfB_{\calQ}-\sfB_{\calQ}^t$.
		Analogously, we think of $\intsf{X}{}$ as a set of vertices and of $\kappa_X$ as a
		generalized Cartan matrix.
	\end{remark}
	
	The description of locally degenerate, imaginary and real elements in $\cq{X}$
	is easily obtained. Specifically, we have the following.
	
	\begin{lemma}
		Let $\ia, \ib$ be two intervals. 
		\begin{enumerate}\itemsep0.2cm
			\item $\ia$ is a locally degenerate element if and only if it is homemorphic to $S^1$.
			\item $\ia$ is an imaginary (resp. real) element if and only if it is non--contractible (resp.\ contractible).
			\item $\ia$ and $\ib$ are perpendicular if and only if $(\ia,\ib)\not\in\intsf{X}{}^{(2)}_{\sgpp}$ and $\ia\cap \ib=\emptyset$.
		\end{enumerate}
	\end{lemma}
	
	\begin{remark}
		It follows immediately from Remark~\ref{rem:euler-form-identities-II} that 
		\begin{align}
			\kappa_X(\ia,\ia)=
			\begin{cases}
				2 & \text{if } \ia \text{ is real},\\
				0 &  \text{if } \ia \text{ is imaginary}.
			\end{cases}
		\end{align}
	\end{remark} 
	
	Continuum quivers provide a large class of examples of the theory developed in
	Sections~\ref{s:sgp-serre}.
	
	\begin{proposition}\label{prop:topologicalquiver}
		The continuum quiver $\cq{X}$ is a good Cartan semigroup.
	\end{proposition}
	
	\begin{proof}
		We shall show that $\cq{X}$ satisfies the conditions \eqref{def:sym-sgp-1}--\eqref{def:sym-sgp-5} from Definition~\ref{def:sym-sgp}. We first observe that $\intsf{X}{}$ satisfies \eqref{def:sym-sgp-1}, \ie at most one among
		$\ia\sgpp \ib$, $\ia\sgpm \ib$, and $\ib\sgpm \ia$ is defined. An easy check shows that the conditions \eqref{def:sym-sgp-2} and \eqref{def:sym-sgp-3} hold.
		
		It remains to prove that the functions $\xi_{X}$ and $\kappa_X$ satisfy \eqref{def:sym-sgp-4} and \eqref{def:sym-sgp-5}. Note that 
		$\kappa_X$ is symmetric and satisfies the condition \eqref{eq:sym-sgp-datum-1} by definition. 
		The proof of conditions~\eqref{eq:sym-sgp-datum-3}, \eqref{eq:sym-sgp-datum-4}, \eqref{eq:sym-sgp-datum-5}, and \eqref{eq:sym-sgp-datum-2}, which is rather tedious and
		technical, is carried out in full details in Appendix~\ref{app:topologicalquiver}.
	\end{proof}
	
	\subsection{Continuum Kac--Moody algebras and Serre relations}
	We study in greater detail the case of semigroup Lie algebras associated with 
	continuum quivers, providing an explicit description of an essential subset of 
	Serre pairs. 
	
	\begin{definition}
		Let $\cq{X}$ be a continuum quiver. 
		The \textit{continuum Kac--Moody algebra of $X$} is the semigroup Lie 
		algebra $\cqg{X}\coloneqq \g(\cq{X})$.
	\end{definition}
	
	Set $\serre{X}\coloneqq \serre{\cq{X}}$. We provide a list of
	pairs in $\serre{X}$ by studying the admissibility conditions from 
	Definition~\ref{def:admissible-pair}
	in the context of continuum quivers. Note that, in the two simplest 
	cases of the real line and the circle, one checks easily that
	\begin{align}
		\serre{\R}&=\intsf{\R}{}\times \intsf{\R}{}\\
		\serre{S^1}&=
		\intsf{S^1}{}\times\intsf{S^1}{}\setminus
		\{(\ia,\ib)\;\vert\; \ia,\ib\neq S^1,\, \ia\sgpp\ib= S^1\}
	\end{align}
	The general case is roughly the same, but it is necessary to exclude the
	appearance of $S^1$--partitions at the level of subintervals.
	
	\begin{proposition}
		Let $\cqserre{X}$ be the set of unordered pairs of intervals $(\ia,\ib)$ such that 
		either $\ia\perp\ib$
		or 
		$\ia\not\perp\ib$, $\ia$ contractible, and the following conditions hold:
		\begin{enumerate}
			\item for any subinterval $\ia'\subseteq\ia$ and $\ib'\subseteq\ib$ with $\kappa_X\rbf{\ib'}{\ib}\neq0$
			whenever $\ib'\neq\ib$, the element $\ia'\sgpp\ib'$ is either not defined or not homeomorphic
			to $S^1$;
			\item if $\ia$ is not elementary, $\ia\cap\ib$ does not contain any critical point.
		\end{enumerate}	
		Then, $\cqserre{X}\subseteq\serre{X}$.
	\end{proposition}
	
	\begin{proof}
		By definition, if $\ia\perp\ib$, then $(\ia,\ib)\in\serre{X}$. Assume $\ia\not\perp\ib$. 
		Then, since locally degenerate elements are homeomorphic to copies of $S^1$ in $X$,
		condition (1) is equivalent to condition (2b) in Definition~\ref{def:admissible-pair}.
		Condition (2) is explained as follows. We consider the following configuration of disjoint 
		intervals
		\begin{align}
			\begin{tikzpicture}[scale=.35]
				\begin{scope}[on background layer]
					\draw [white] (0,0) rectangle (15,10);
				\end{scope}
				\coordinate (C1) at (3, 5);
				\coordinate (C2) at (6, 5);
				\coordinate (C3) at (9, 5);
				\coordinate (C4) at (12, 7.5);
				\coordinate (C5) at (12, 2.5);
				\draw [->,very thick, green]	(C1) -- (C2);
				\draw [->,very thick, orange]		 (C2) -- (C3);
				\draw [->,very thick, blue]      (C3) arc (270:360:3);
				\draw [->,very thick, red]      (C3) arc (90:0:3);
				\node at (13.5, 7.5) {$z$};
				\node at (13.5, 2.5) {$y$};
				\node at (7.5, 6) {$x$};
				\node at (4.5, 6) {$w$};
			\end{tikzpicture}
		\end{align}
		Set $a\coloneqq w\cup x\cup y\cup z$, $b\coloneqq x$, and $c\coloneqq w\cup x$.
		It is easy to check that the triple $(a,b,c)$ is not admissible, since it does not satisfy the 
		property (1b) from Definition~\ref{def:admissible-triple}. In particular, $(a,b)$ is not an
		admissible pair. Condition (2) prevents this configuration to appear and implies that the
		pair $(\ia,\ib)$ satisfies the property (2a) from Definition~\ref{def:admissible-triple}. The
		result follows.
	\end{proof}
	
	\begin{remark}
		We shall not need to prove that $\cqserre{X}=\serre{X}$. In fact, in the proof of Theorem
		~\ref{thm:main}, we show that the relations indexed by $\cqserre{X}$ generates the maximal
		ideal $\r_X\subset\wt{\g}(\cq{X})$. In particular, $\cqserre{X}$ contains enough information to recover $\serre{X}$
		in any case.
	\end{remark}

	Note that, if $(\ia, \ib)\in\cqserre{X}$ and $\ia\sgpp \ib$ is defined, one has
	\begin{align}
		\xi_X(\ia\sgpp \ib, \ia)=(-1)^{\abfcf{\ia\sgpp \ib}{\ia}}\rbfcf{\ia\sgpp \ib}{\ia}=(-1)^{1+\abfcf{\ib}{\ia}}\left(2+\rbfcf{\ib}{\ia})\right)=(-1)^{\abfcf{\ia}{\ib}}\ ,
	\end{align}
	since $\ia$ is contractible and $\rbfcf{\ib}{\ia}=-1$. Similarly, $\xi_X(\ia,\ia\sgpp \ib)=(-1)^{\abfcf{\ib}{\ia}}$.
	Therefore, from Corollary~\ref{cor:perp-comm} and \ref{cor:commutation-degenerate}, and 
	Theorem~\ref{thm:sym-serre-rel}, we get the following.
	\begin{corollary}\label{prop:serre-TQ}
		Let $\ia, \ib$ be two intervals of $X$. The following relations hold in $\cqg{X}$.
		\begin{enumerate}\itemsep0.2cm
			\item \label{prop:serre-TQ-(1)} If $\ia\perp \ib$ (\ie $\ia\cap \ib=\emptyset$ and 
			$\ia\sgpp \ib$ is not defined), then $[\xpm{\ia}, \xpm{\beta}]=0$.
			\item \label{prop:serre-TQ-(2)} More in general, if $(\ia, \ib)\in \cqserre{X}$, then
			\begin{align}
				[\xp{\ia}, \xp{\ib}]= & (-1)^{\abfcf{\ib}{\ia}}\xp{\ia\sgpp\ib} \ , \\
				[\xm{\ia}, \xm{\ib}]= & (-1)^{\abfcf{\ia}{\ib}}\xm{\ia\sgpp\ib}\ .
			\end{align}
		\end{enumerate}
	\end{corollary}
	
	\begin{remark}\label{rem:serre-TQ}
		\hfill
		\begin{enumerate}\itemsep0.2cm
			\item \label{rem:serre-TQ-(1)} If $\ib\simeq S^1$ and $\ia\subseteq \ib$, then $(\ia, \ib)\in \cqserre{X}$. Hence, by (2) above $[\xpm{\ia}, \xpm{\beta}]=0$.
			\item 	For $X=\R$, the relations above coincide with Equations~\eqref{eq:serre-new}. 
		\end{enumerate}
	\end{remark}
	
	\subsection{Borcherds--Kac--Moody subalgebras}\label{ss:bkm-and-sgp}
	
	Our main goal is to provide an explicit description of the maximal ideal 
	$\r_X\subset\wt{\g}(\cq{X})$, thus providing a complete presentation by generators
	and relations of $\cqg{X}$. To this end, it will be crucial to consider symmetric Borcherds--Kac--Moody
	subalgebras in $\cqg{X}$ associated with certain finite configurations of disjoint intervals.
	
	\begin{definition}\label{def:irreducibleset}
		Let $\calJ=\{\ia_k\}_{k}$ be a finite set of intervals $\ia_k\in\intsf{X}{}$. We say
		that $\calJ$ is \textit{irreducible} if the following conditions hold: 
		\begin{enumerate}\itemsep0.2cm
			\item \label{def:irreducibleset-1} every interval is elementary; 
			\item \label{def:irreducibleset-2} given two intervals $\ia,\ib\in\calJ$, $\ia\neq \ib$, one of the following mutually exclusive cases occurs:
			\begin{enumerate}[label=(\alph*)]\itemsep0.2cm
				\item \label{def:irreducibleset-a} $\ia\sgpp \ib$ is defined;
				\item \label{def:irreducibleset-b} $\ia\perp\ib$; 
				\item \label{def:irreducibleset-c} $\ia\simeq S^1$ and $\ib\subset \ia$\ .
			\end{enumerate}
		\end{enumerate}
	\end{definition}
	
	Assume henceforth that $\calJ$ is an irreducible set of intervals.
	Let $\sfA_{\calJ}$ be the matrix given by the values of $\kappa_X$ on $\calJ$, 
	\ie $\sfA_{\calJ}=\big(\kappa_{X}(\ia, \ib)\big)_{\ia, \ib\in \calJ}$. Note that the diagonal
	entries of $\sfA_{\calJ}$ are either $2$ or $0$, while off--diagonal the only possible
	entries are $0,-1,-2$. Let $\calQ_{\calJ}$ be the corresponding quiver with Cartan matrix $\sfA_{\calJ}$. 
	Note that a contractible elementary interval in $\calJ$ corresponds to a vertex of $\calQ_{\calJ}$ without 
	loops at it. For example, we obtain the following quivers.
	
	\begin{align}
		\begin{array}{|c|c|c|}
			\hline
			\text{Configuration of intervals} & \text{Borcherds--Cartan diagram}\\
			\hline &\\
			\begin{tikzpicture}[scale=.35]
				\begin{scope}[on background layer]
					\draw [white] (0,0) rectangle (15,10);
				\end{scope}
				\coordinate (C1) at (3, 5);
				\coordinate (C2) at (6, 5);
				\coordinate (C3) at (9, 5);
				\coordinate (C4) at (12, 7.5);
				\coordinate (C5) at (12, 2.5);
				\coordinate (CON1) at (10,5);
				\coordinate (CON2) at (12,5);
				\draw [->,very thick, purple]	(C1) -- (C2);
				\draw [->,very thick, blue]		 (C2) -- (C3);
				\draw [->,very thick, yellow]      (C3) arc (270:360:3);
				\draw [->,very thick, orange]      (C3) arc (90:0:3);
				\node at (4.5, 6) {$\ia_1$};
				\node at (7.5, 6) {$\ia_2$};
				\node at (13, 6.5) {$\ia_3$};
				\node at (13, 3.5) {$\ia_4$};
			\end{tikzpicture}
			&
			\begin{tikzpicture}[scale=.35]
				\begin{scope}[on background layer]
					\draw [white] (0,0) rectangle (15,10);
				\end{scope}
				\node (V1) at (3.5,5)      [circle,draw=purple,fill=purple, inner sep=3pt]    {};
				\node (V2) at (7.5,5)      [circle,draw=blue,fill=blue, inner sep=3pt]          {};
				\node (V3) at (10.5,7.5) [circle,draw=yellow,fill=yellow, inner sep=3pt]   {};
				\node (V4) at (10.5,2.5) [circle,draw=orange,fill=orange, inner sep=3pt]  {};
				\draw [->, very thick] (V1) -- (V2);  
				\draw [<-, very thick] (V3) -- (V2);
				\draw [<-, very thick] (V4) -- (V2);
				\node at (3.5, 6)    {$\alpha_1$};
				\node at (7.5, 6)    {$\alpha_2$};
				\node at (11.5, 7.5)  {$\alpha_3$};
				\node at (11.5, 2.5)  {$\alpha_4$};
			\end{tikzpicture}
			\\
			\hline 
			\begin{tikzpicture}[scale=.35]
				\begin{scope}[on background layer]
					\draw [white] (0,0) rectangle (15,10);
				\end{scope}
				\draw [->, very thick, purple] (7.5,7.5) arc (90:270:2.5);
				\draw [->, very thick, blue] (7.5,2.5) arc (-90:90:2.5);
				\node at (4, 5)    {$\ia_1$};
				\node at (11, 5)    {$\ia_2$};
			\end{tikzpicture}
			&
			\begin{tikzpicture}[scale=.35]
				\begin{scope}[on background layer]
					\draw [white] (0,0) rectangle (15,10);
				\end{scope}
				\node (V1) at (5,5)      [circle,draw=purple,fill=purple, inner sep=3pt]    {};
				\node (V2) at (10,5)      [circle,draw=blue,fill=blue, inner sep=3pt]          {};
				\coordinate (CON1) at (6.5, 7.5);
				\coordinate (CON2) at (8.5, 7.5);
				\coordinate (CON3) at (6.5, 2.5);
				\coordinate (CON4) at (8.5, 2.5);
				\draw [->, very thick] (V1) .. controls (CON1) and (CON2) .. (V2);  
				\draw [<-, very thick] (V1) .. controls (CON3) and (CON4) .. (V2);  
				\node at (5, 6)    {$\alpha_1$};
				\node at (10.2, 6)    {$\alpha_2$};
			\end{tikzpicture}
			\\
			\hline 
			\begin{tikzpicture}[scale=.35]
				\begin{scope}[on background layer]
					\draw [white] (0,0) rectangle (15,10);
				\end{scope}
				\draw [->, very thick, purple] (7.5,7.5) arc (90:180:2.5);
				\draw [->, very thick, yellow] (5,5) arc (180:270:2.5);
				\draw [->, very thick, blue] (7.5,2.5) arc (270:360:2.5);
				\draw [->, very thick, orange] (10,5) arc (0:90:2.5);
				\draw [<-, very thick, green] (2.5,2.5) arc (270:360:2.5);
				\draw [->, very thick, red] (10,5) arc (180:90:2.5);
				\node at (3.5, 3.5)      {$\ia_1$};
				\node at (5.5, 7.5)    {$\ia_2$};
				\node at (5.5, 2.5)    {$\ia_3$};
				\node at (9.5, 7.5)    {$\ia_4$};
				\node at (9.5, 2.5)    {$\ia_5$};
				\node at (11.5, 6.5)      {$\ia_6$};
			\end{tikzpicture}
			&
			\begin{tikzpicture}[scale=.35]
				\begin{scope}[on background layer]
					\draw [white] (0,0) rectangle (15,10);
				\end{scope}
				\node (V1) at (2.5,5)        [circle,draw=green,fill=green, inner sep=3pt]    {};
				\node (V2) at (5,5)      [circle,draw=purple,fill=purple, inner sep=3pt]          {};
				\node (V3) at (7.5,2.5)      [circle,draw=yellow,fill=yellow, inner sep=3pt]          {};
				\node (V4) at (7.5,7.5)      [circle,draw=orange,fill=orange, inner sep=3pt]          {};
				\node (V5) at (10,5)      [circle,draw=blue,fill=blue, inner sep=3pt]          {};
				\node (V6) at (12.5,5)      [circle,draw=red,fill=red, inner sep=3pt]          {};
				\node at (2.5, 6)    {$\alpha_1$};
				\node at (4.8, 6)    {$\alpha_2$};
				\node at (7.5, 1.5)    {$\alpha_3$};
				\node at (7.5, 8.5)    {$\alpha_4$};
				\node at (10, 6)    {$\alpha_5$};
				\node at (12.5, 6)    {$\alpha_6$};
				\draw [<-, very thick] (V1) -- (V2);  
				\draw [->, very thick] (V2) -- (V3);
				\draw [->, very thick] (V3) -- (V5);
				\draw [->, very thick] (V5) -- (V4);
				\draw [->, very thick] (V4)-- (V2);  
				\draw [->, very thick] (V5) -- (V6);
			\end{tikzpicture}
			\\
			\hline
		\end{array}
	\end{align}
	
	Instead, an interval of $\calJ$ homeomorphic to $S^1$ corresponds in $\calQ_{\calJ}$ to 
	a vertex having exactly one loop at it, as in the following examples.
	
	\begin{align}
		\begin{array}{|c|c|}
			\hline
			\text{Configuration of intervals} & \text{Borcherds--Cartan diagram}\\
			\hline &\\
			\begin{tikzpicture}[scale=.35]
				\begin{scope}[on background layer]
					\draw [white] (0,0) rectangle (15,10);
				\end{scope}
				\draw [->, very thick, purple] (10,5) arc (0:360:2.5);
				\draw [->, very thick, blue] (7.5,7.5) arc (90:180:2.5);
				\draw [<-, very thick, green] (2.5,2.5) arc (270:360:2.5);
				\node at (3.5, 3.5)      {$\ia_1$};
				\node at (5.5, 7.5)    {$\ia_2$};
				\node at (11, 5)    {$\ia_3$};
			\end{tikzpicture}
			&
			\begin{tikzpicture}[scale=.35]
				\begin{scope}[on background layer]
					\draw [white] (0,0) rectangle (15,10);
				\end{scope}
				\node (V2) at (7.5, 7.5)  [circle,draw=blue,fill=blue, inner sep=3pt]    {};
				\node (V1) at (5, 5)  [circle,draw=green,fill=green, inner sep=3pt]    {};
				\node (V3) at (10, 5)  [circle,draw=purple,fill=purple, inner sep=3pt]    {};
				\draw [->, very thick] (V2) -- (V1);   
				\draw [->, very thick] (V3) -- (V1);
				\draw [->, very thick] (13,5) arc (0:360:1.5);  
				\node at (10, 5)  [circle,draw=purple,fill=purple, inner sep=3pt]    {};
				\node at (4.8, 6)    {$\alpha_1$};
				\node at (7.5, 8.5)    {$\alpha_2$};
				\node at (9.5, 6)    {$\alpha_3$};
			\end{tikzpicture}\\
			\text{($\ia_3$ is a full circle)} & \\
			\hline
		\end{array}
	\end{align}
	\vspace{-0.5cm}
	\begin{align}
		\begin{array}{|c|c|}
			\hline &\\
			\begin{tikzpicture}[scale=.35]
				\begin{scope}[on background layer]
					\draw [white] (0,0) rectangle (15,10);
				\end{scope}
				\draw [->, very thick, purple] (12.5,5) arc (0:360:2.5);
				\draw [<-, very thick, green] (2.5,5) arc (180:540:2.5);
				\node at (5, 8.5)      {$\ia_1$};
				\node at (10, 8.5)      {$\ia_2$};
			\end{tikzpicture}
			&
			\begin{tikzpicture}[scale=.35]
				\begin{scope}[on background layer]
					\draw [white] (0,0) rectangle (15,10);
				\end{scope}
				\node (V1) at (5, 5)  [circle,draw=green,fill=green, inner sep=3pt]    {};
				\node (V3) at (10, 5)  [circle,draw=purple,fill=purple, inner sep=3pt]    {};
				\coordinate (CON1) at (6.5, 7.5);
				\coordinate (CON2) at (8.5, 7.5);
				\coordinate (CON3) at (6.5, 2.5);
				\coordinate (CON4) at (8.5, 2.5);
				\draw [->, very thick] (V1) .. controls (CON1) and (CON2) .. (V3);  
				\draw [<-, very thick] (V1) .. controls (CON3) and (CON4) .. (V3);  
				\draw [->, very thick] (13,5) arc (0:360:1.5);  
				\draw [->, very thick] (2,5) arc (180:540:1.5);  
				\node at (V3) [circle,draw=purple,fill=purple, inner sep=3pt]    {};
				\node at (V1) [circle,draw=green,fill=green, inner sep=3pt]    {};
				\node at (5, 6.5)    {$\alpha_1$};
				\node at (10, 6.5)    {$\alpha_2$};
			\end{tikzpicture}\\
			\text{($\ia_1,\ia_2$ are full circles)} & \\
			\hline
		\end{array}
	\end{align}
	
	We shall consider two Lie algebras associated to $\calJ$. First, let $\g(\calJ)$ be 
	the Lie subalgebra of $\cqg{X}$ generated by the elements $\xpm{\ia}$ and $\xz{\ia}$ with $\ia\in \calJ$. Then, let $\g_{\calQ_{\calJ}}$ be the symmetric Borcherds--Kac--Moody algebra 
	of $\calQ_{\calJ}$ (\ie the derived Lie algebra of $\g(\sfA_{\calJ})$ --- cf. Section~\ref{s:km-vkm}).
	
	\begin{proposition}\label{prop:bkm-sgp}
		The assignment 
		\begin{align}
			e_\ia\mapsto \xp{\ia}\ , \quad f_\ia\mapsto \xm{\ia} \quad \text{and} \quad h_\ia \mapsto \xz{\ia}
		\end{align}
		for any $\ia\in \calJ$, defines a surjective homomorphism of Lie algebras $\Phi_\calJ\colon \g_{\calQ_{\calJ}}\to \g(\calJ)$.
	\end{proposition}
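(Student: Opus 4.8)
The plan is to show that the assignment extends to a Lie algebra homomorphism $\Phi_\calJ$ and then that it is surjective. For the existence of $\Phi_\calJ$, I would invoke the presentation of the derived Borcherds--Kac--Moody algebra $\g_{\calQ_{\calJ}} = \g(\sfA_{\calJ})'$ in terms of its $3|\calJ|$ Chevalley generators $e_J, f_J, h_J$ with the Serre relations \eqref{eq:Serrerel-BKM-1}, \eqref{eq:Serrerel-BKM-2} (see Remark~\ref{rem:r=s} and Remark~\ref{sss:der-km}). So I first need to check that the images $X_+(J), X_-(J), X_0(J)$ in $\g(X)$, for $J \in \calJ$, satisfy all the defining relations of $\g_{\calQ_{\calJ}}$. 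The relations $[X_0(J), X_0(J')] = 0$, $[X_0(J), X_\pm(J')] = \pm \kappa_X(J,J') X_\pm(J')$ and $[X_+(J), X_-(J')] = \delta_{J,J'} X_0(J) + \ldots$ hold in $\g(X)$ by construction; the point is that for $J \neq J'$ in an irreducible set $\calJ$, the correction terms $X_+(J \sgpm J') - X_-(J' \sgpm J)$ vanish, since in the three allowed cases (a), (b), (c) neither $J \sgpm J'$ nor $J' \sgpm J$ is defined (in case (c) we have $J \simeq S^1$, $J' \subset J$, but $J \sgpm J'$ is not an interval, and $J' \sgpm J$ makes no sense). Hence the Chevalley-type relations \eqref{eq:der-km} with matrix $\sfA_{\calJ}$ hold on the nose.

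Next I would verify the Serre relations. For an off-diagonal pair with $\kappa_X(J,J') = 0$: this is exactly case (b), i.e. $J \perp J'$, and Proposition~\ref{prop:serre-TQ}(1) (equivalently Corollary~\ref{cor:serre-rel}(2)) gives $[X_\pm(J), X_\pm(J')] = 0$, which is \eqref{eq:Serrerel-BKM-2}. For a real vertex $J$ (contractible, $\kappa_X(J,J) = 2$) and $\kappa_X(J,J') = -1$: then $J \sgpp J'$ exists and is contractible, so $(J,J') \in \serre{X}$ and Corollary~\ref{cor:serre-rel}(1) gives the quadratic Serre relation $\sfad(X_\pm(J))^2(X_\pm(J')) = 0$, matching \eqref{eq:Serrerel-BKM-1} since $1 - \tfrac{2}{2}(-1) = 2$. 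For $\kappa_X(J,J') = -2$ with $J$ real: then $J \sgpp J'$ is non-contractible (homeomorphic to $S^1$ up to contractible summands), so again $(J,J') \in \serre{X}$, and one needs $\sfad(X_\pm(J))^3(X_\pm(J')) = 0$; this should follow from Proposition~\ref{prop:locallynilpotent} with $k=2$, writing $J \sgpp J' = (J \sgpp J')$ and checking the chain-of-$\serre{X}$ hypotheses, or directly by a short Jacobi-identity computation using $[X_\pm(J), X_\pm(J')] = \pm X_\pm(J \sgpp J')$ together with $[X_\pm(J), X_\pm(J\sgpp J')] = 0$ (the latter because $J \subseteq J \sgpp J' \simeq S^1$, which is in $\serre{X}$ by Remark~\ref{rem:serre-TQ}(1)). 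Finally, for an imaginary vertex $J$ ($J \simeq S^1$, $\kappa_X(J,J) = 0$) and $\kappa_X(J,J') = 0$, i.e. $J \perp J'$, we again get $[X_\pm(J), X_\pm(J')] = 0$ from Proposition~\ref{prop:serre-TQ}(1), which is the degenerate Serre relation \eqref{eq:Serrerel-BKM-2}. Note there is no case of an imaginary $J$ with $\kappa_X(J,J') = -1$ inside an irreducible set, since $J \simeq S^1$ and $J \sgpp J'$ existing would force $J \sgpp J'$ non-contractible, giving $\kappa_X(J,J') = -2$; and case (c), $J' \subset J \simeq S^1$, gives $\kappa_X(J,J') = 0$ as well (by Remark computing $\rbfcf{S^1}{I'} = 0$), so again \eqref{eq:Serrerel-BKM-2} applies. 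I should double-check that $\sfA_{\calJ}$ is symmetrizable — in fact it is symmetric since $\kappa_X$ is — so that the BKM presentation theorem of \cite{borcherds-88} applies and the Serre relations above generate the full defining ideal.

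Having checked all relations, the universal property of $\g_{\calQ_{\calJ}}$ (as the quotient of the free Lie algebra on the $3|\calJ|$ generators by the ideal generated by the Chevalley and Serre relations, which by Borcherds' theorem equals the maximal ideal trivially intersecting the Cartan part) yields the homomorphism $\Phi_\calJ$. Surjectivity is immediate: by definition $\g(\calJ)$ is generated as a Lie algebra by the elements $X_\epsilon(J)$ with $J \in \calJ$ and $\epsilon = 0, \pm$, and these are precisely the images $\Phi_\calJ(e_J), \Phi_\calJ(f_J), \Phi_\calJ(h_J)$, so $\Phi_\calJ$ hits a generating set and hence is onto.

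The main obstacle I anticipate is the case-by-case verification of the Serre relations for the off-diagonal value $\kappa_X = -2$ with a real vertex, i.e. establishing $\sfad(X_\pm(J))^3(X_\pm(J')) = 0$. The subtlety is that $J \sgpp J' \simeq S^1$ is imaginary, so I cannot blindly iterate Theorem~\ref{thm:sym-serre-rel}; I must argue carefully either via Proposition~\ref{prop:locallynilpotent} after exhibiting the required decomposition of $J'$ into a suitable chain, or by a direct Jacobi computation as sketched above, using that $[X_\pm(J), X_\pm(J \sgpp J')] = 0$ because $J$ is a sub-interval of the circle $J \sgpp J'$ and hence $(J, J \sgpp J') \in \serre{X}$. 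This injectivity-of-relations bookkeeping — making sure that exactly the relations of $\g_{\calQ_{\calJ}}$ and no spurious extra ones are needed — is where the real work lies; the surjectivity half is a formality. (Injectivity of $\Phi_\calJ$, upgrading this to an isomorphism, is presumably handled separately, e.g. by comparing root spaces or via the colimit description, and is not part of the present statement.)
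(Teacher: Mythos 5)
Your proposal follows essentially the same route as the paper's proof: verify the Chevalley-type relations and the Borcherds--Serre relations for the images $X_\epsilon(J)$, $J\in\calJ$, treating the off-diagonal values $\kappa_X(J,J')=0,-1,-2$ exactly as the paper does via Proposition~\ref{prop:serre-TQ}, Corollary~\ref{cor:serre-rel}, and Proposition~\ref{prop:locallynilpotent} with $k=2$, and then observe that surjectivity is immediate since the images generate $\g(\calJ)$ by definition. One local justification is incorrect, although the conclusion it supports still holds: in case (c), with $J\simeq S^1$ and $J'\subset J$ a contractible subinterval, the difference $J\sgpm J'$ \emph{is} defined --- it is the complementary arc, which is a contractible interval --- contrary to your claim that it is not an interval. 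The correction term in \eqref{eq:gX-rel-3} nevertheless vanishes, but for a different reason: its coefficient is $(-1)^{\abfcf{J}{J'}}\rbfcf{J}{J'}$, and $\rbfcf{J}{J'}=0$ since $\cf{J}=\cf{S^1}$ is constant, so that $\abfcf{J}{J'}=0=\abfcf{J'}{J}$. This is precisely how the paper argues: the difference is defined only in case (c), and there $\xi_X(J,J')=0$.
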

	
	\begin{proof}
		It is enough to show that $\Phi_{\calJ}$ is a Lie algebra map. The surjectivity is clear. 
		Recall that $\g_{\calQ_{\calJ}}$ is generated by $\{e_\ia, h_\ia, f_\ia\,|\, \ia\in\calJ\}$ with
		the following defining relations:
		\begin{align}\label{eq:km-rel-TQ}
			[h_\ia,e_{\ib}]=\kappa_{X}(\ia,\ib)\, e_{\ib}\ ,
			\quad
			[h_\ia,f_{\ib}]=-\kappa_{X}(\ia,\ib)\, e_{\ib}\ ,
			\quad
			[e_\ia,f_{\ib}]=\drc{\ia\ib}\, h_\ia\ ,
		\end{align}
		\begin{align}\label{eq:Serrerel-BKM-1-TQ}
			\sfad(e_\ia)^{1-\rbf{\alpha}{\beta}}(e_{\ib})=0=\sfad(f_{\ia})^{1-\rbf{\alpha}{\beta}}(f_{\ib})
			\qquad \text{ if }\quad \ia\not\simeq S^1\ ,
		\end{align}
		\begin{align}\label{eq:Serrerel-BKM-2-TQ}
			[e_\ia,e_{\ib}]=0=[f_{\ia},f_{\ib}]\qquad 
			\text{ if }\quad \ia\simeq S^1\quad \text{ and } \quad \rbf{\alpha}{\beta}=0\ .
		\end{align}
		Note that, for any $\ia,\ib\in\calJ$, their difference $\ia\sgpm \ib$ is defined
		only in the case \ref{def:irreducibleset-c}, thus necessarily $\kappa_X\rbf{\ia}{\ib}=0$. Therefore, the relation
		\eqref{eq:km-rel-TQ} is easily seen to be satisfied in $\g(\calJ)$. We shall prove 
		that the (standard) Serre relations \eqref{eq:Serrerel-BKM-1-TQ} and 
		\eqref{eq:Serrerel-BKM-2-TQ} hold in $\g(\calJ)$.
		
		Let $\ia,\ib\in\calJ$, $\ia\neq \ib$.
		First, note that, by construction, $\rbf{\alpha}{\beta}=0$ if and only if
		we are either in case \ref{def:irreducibleset-b}, \ie $\ia,\ib$ are perpendicular, or in case \ref{def:irreducibleset-c}, \ie $\ia\simeq S^1$
		and $\ib\subset S^1$. Thus, by Corollary~\ref{prop:serre-TQ}--\eqref{prop:serre-TQ-(1)} and \eqref{prop:serre-TQ-(2)} (and Remark~\ref{rem:serre-TQ}--\eqref{rem:serre-TQ-(1)}),
		$[\xpm{\ia},\xpm{\beta}]=0$. Therefore, relations \eqref{eq:Serrerel-BKM-1-TQ} (for $\rbf{\alpha}{\beta}=0$) and equation
		\eqref{eq:Serrerel-BKM-2-TQ} hold. 
		
		Assume now that $\rbf{\alpha}{\beta}\neq0$ (\ie $\rbf{\alpha}{\beta}=-1,-2$) and $\ia\sgpp \ib$ is defined.
		Then, necessarily, one of the following occurs:
		\begin{enumerate}\itemsep0.2cm
			\item $\rbf{\alpha}{\beta}=-1$ and $(\ia, \ib)\in \cqserre{X}$\ ;
			\item $\rbf{\alpha}{\beta}=-2$ and $\ia\sgpp \ib\simeq S^1$\ .
		\end{enumerate}
		In case (1), it follows from Corollary~\ref{cor:serre-rel} that 
		$[\xpm{\ia}, [\xpm{\ia}, \xpm{\beta}]]=0$ (we assume $\ia$ is contractible)
		and therefore the Serre relation \eqref{eq:Serrerel-BKM-1-TQ} with $\rbf{\alpha}{\beta}=-1$ is 
		satisfied. In case (2), it is easy to see that we are in the case described by Proposition~\ref{prop:locallynilpotent}
		with $k=2$. Therefore,
		\begin{align}
			[\xpm{\ia},[\xpm{\ia}, [\xpm{\ia}, \xpm{\beta}]]]=0\ ,
		\end{align} 
		and the Serre relation \eqref{eq:Serrerel-BKM-1-TQ} with $\rbf{\alpha}{\beta}=-2$ is 
		satisfied. The result follows.
	\end{proof}
	
	\subsection{A presentation by generators and relations}
	We now prove the main result of this section, providing a presentation by generators 
	and relations of continuum Kac--Moody algebras. For simplicity, we set
	$\abf{\ia}{\ib}\coloneqq\abfcf{\ia}{\ib}$ and $\rbf{\ia}{\ib}\coloneqq\rbfcf{\ia}{\ib}$.
	Recall also that $\ia\perp\ib$ if $\ia\cap\ib=\emptyset$ and $\ia\sgpp\ib$ is not defined.
	
	\begin{theorem}\label{thm:main}
		The continuum Kac--Moody algebra $\cqg{X}$ is generated by the elements 
		$\xpm{\ia}$ and $\xz{\ia}$ for $\ia\in \intsf{X}{}$, subject to the following 
		defining relations: 
		\begin{enumerate}
			\item \label{thm:main-1} for any $\ia, \ib\in\intsf{X}{}$ such that $\ia\sgpp \ib$ is defined, 
			\begin{align}
				\xz{\alpha\sgpp\beta}=\xz{\ia}+\xz{\ib}\ ;
			\end{align}
			\item \label{thm:main-2}
			for any $\ia, \ib\in\intsf{X}{}$, 
			\begin{align}
				[\xz{\ia},\xz{\ib}] &=0\ ,\label{eq:gX-rel-1}\\[3pt]
				[\xz{\ia}, \xpm{\beta}] &=\pm\rbf{\ia}{\ib}\, \xpm{\beta}\ , \label{eq:gX-rel-2}\\[3pt]
				[\xp{\ia},\xm{\ib}]&=\drc{\ia,\ib}\, \xz{\ia}+(-1)^{\abf{\ia}{\ib}}\rbf{{\ia}}{{\ib}}
				\left(\xp{\ia\sgpm\ib}-\xm{\ib\sgpm\ia}\right)\ ;\label{eq:gX-rel-3}
			\end{align}
			\item \label{thm:main-3} if $(\ia, \ib)\in \cqserre{X}$, then
			\begin{align}\label{eq:gX-rel-4}
				\begin{aligned}
					[\xp{\ia}, \xp{\ib}]= & (-1)^{\abf{\ib}{\ia}}\xp{\ia\sgpp\ib} \ , \\
					[\xm{\ia}, \xm{\ib}]= & (-1)^{\abf{\ia}{\ib}}\xm{\ia\sgpp\ib}\ .
				\end{aligned}
			\end{align}
		\end{enumerate}
	\end{theorem}
	
	\begin{proof}
		Recall that, by definition, $\cqg{X}=\cqwtg{X}/\r_X$, where $\cqwtg{X}$ is the Lie algebra generated by $\xpm{\ia}$ and $\xz{\ia}$, $\ia\in\intsf{X}{}$,
		with relations \eqref{thm:main-1} and \eqref{thm:main-2}, and $\r_X\subset\cqwtg{X}$ is the sum of all two--sided graded ideals with trivial intersection with the commutative subalgebra generated
		by $\xz{\ia}$ with $\ia\in\intsf{X}{}$. Let $\stackrel{\circ}{\r}_X\subset\cqwtg{X}$ be the ideal generated by relations relation \eqref{eq:gX-rel-4}. By Corollary~\ref{prop:serre-TQ}, we know that $\stackrel{\circ}{\r}_X\subseteq\r_X$.
		Thus we have to prove that $\stackrel{\circ}{\r}_X=\r_X$. We obtain this identity as
		a consequence of the Gabber--Kac theorem for Borcherds--Kac--Moody algebras
		\cite{gabber-kac-81,borcherds-88}.
		
		Set $\stackrel{\circ}{\g}\!\!(X)\coloneqq\cqwtg{X}/\! \stackrel{\circ}{\r}_X$. Let $\pi\colon \cqwtg{X}\to\, \stackrel{\circ}{\g}\!\!(X)$ be the natural projection and assume that there exists $v\in\pi(\r_X)$ with $v\neq0$. Let $\calJ(v)$ be any finite set of intervals such that $v$ belongs $\stackrel{\circ}{\g}\!\!(\calJ(v))$, where $\stackrel{\circ}{\g}\!\!(\calJ(v))\! \subseteq\; \stackrel{\circ}{\g}\!\!(X)$ is the Lie subalgebra generated by the elements $\xpm{\ia}$ and $\xz{\ia}$ with $\ia\in\calJ(v)$. Using relations \eqref{eq:gX-rel-4}, we can always assume that $\calJ(v)$ is an irreducible set of intervals (cf. Section~\ref{ss:bkm-and-sgp}). Moreover, since the result of Proposition~\ref{prop:bkm-sgp} relies exclusively on the relation \eqref{eq:gX-rel-4}, we can conclude that the homomorphism $\Phi_{\calJ(v)}$ factors through $\stackrel{\circ}{\g}\!\!(\calJ(v))$, \ie there exists a surjective homomorphism $\stackrel{\circ}{\Phi}_{\calJ(v)}\colon \g_{\calQ_{\calJ(v)}}\to\stackrel{\circ}{\g}(\calJ(v))$ such that $\Phi_{\calJ(v)}=\stackrel{\circ}{\pi} \circ \stackrel{\circ}{\Phi}_{\calJ(v)}$, where $\stackrel{\circ}{\pi}\colon  \stackrel{\circ}{\g}\!\!(\calJ(v)) \to\g(\calJ(v))$ is the canonical projection. Since $v\in\pi(\r_X)$ and $\stackrel{\circ}{\Phi}_{\calJ(v)}$ is the identity on the Cartan subalgebras, $\big(\stackrel{\circ}{\Phi}_{\calJ(v)}\big)^{-1}(v)$ generates a two--sided ideal which trivially intersect the Cartan subalgebra in $\g_{\calQ_{\calJ}}$. By \cite[Corollary~2.6]{borcherds-88}, this is necessarily trivial. Therefore, $v=0$ and $\stackrel{\circ}{\r}_X=\r_X$. The result follows.
	\end{proof}
	
	\begin{rem}
		It follows from the proof above that the morphism $\Phi_\calJ:\g_{\calQ_{\calJ}}\to\g(\calJ)$
		from Proposition~\ref{prop:bkm-sgp} is an isomorphism.\hfill $\triangle$
	\end{rem}
	
	\subsection{Finite quivers and colimit structure}
	The proof of Theorem~\ref{thm:main} crucially relies on the fact that $\cqg{X}$ 
	can be covered by symmetric Borcherds--Kac--Moody algebras. We describe this
	relation in greater details.
	
	\begin{lemma}
		Let $\calJ,\calJ'$ be two irreducible finite sets of intervals in $X$.
		\begin{enumerate}\itemsep0.2cm
			\item If $\calJ'\subseteq\calJ$, there is a canonical embedding
			$\phi'_{\calJ,\calJ'}\colon \g(\calJ')\to\g(\calJ)$ sending $\xpm{\ia}\mapsto \xpm{\ia}$ and $\xz{\ia}\mapsto \xz{\ia}$ for $\ia\in\calJ'$.
			\item If $\calJ$ is obtained from $\calJ'$ by replacing an element $\ia_s\in\calJ'$
			with two intervals $\ia_1,\ia_2$ such that $\ia_s=\ia_1\sgpp \ia_2$,
			there is a canonical embedding $\phi''_{\calJ,\calJ'}\colon \g(\calJ')\to\g(\calJ)$, which sends
			\begin{align}
				\xpm{\ia}& \mapsto \xpm{\ia} \quad \text{and} \quad \xz{\ia}\mapsto \xz{\ia} \quad \text{for $\ia\in\calJ'\setminus\{\ia_s\}$}\ , \quad \xz{\ia_s} \mapsto \xz{\ia_1}+\xz{\ia_2}\ , \\[2pt]
				\xp{\ia_s} &\mapsto (-1)^{\abfcf{\ia_2}{\ia_1}}\, [\xp{\ia_1},\xp{\ia_2}] \ , \quad \xm{\ia_s}\mapsto (-1)^{\abfcf{\ia_1}{\ia_2}}\, [\xm{\ia_1},\xm{\ia_2}] \ .
			\end{align}	
		\end{enumerate}
	\end{lemma}
	
	\begin{proof}
		(1) is clear. (2) is a straightforward consequence of Theorem~\ref{thm:main}.	
	\end{proof}
	
	\begin{definition}
		Let $\calQ$ be a finite quiver. We say that $\calQ$ \emph{has shape $X$} if there exists an
		irreducible set of intervals $\calJ$ such that $\calQ=\calQ_\calJ$. We denote by $\mathsf{Sh}(X)$
		the set of all such pairs $(\calQ,\calJ)$.
	\end{definition}
	
	Thus, for any $(\calQ,\calJ)\in\mathsf{Sh}(X)$, there is a canonical 
	embedding $\psi_\calJ\colon \g_\calQ\to\cqg{X}$ which factors through the 
	isomorphism $\Phi_{\calJ}\colon\g_{\calQ}\to\g(\calJ)$ given by Proposition~\ref{prop:bkm-sgp}.
	We also obtain analogous injective morphisms
	\begin{align}
		\phi'_{\calQ,\calQ'}\coloneqq  \Phi_{\calJ}^{-1}\circ \phi_{\calJ,\calJ'}'\circ \Phi_{\calJ'}\quad\text{and}\quad \phi''_{\calQ,\calQ'} \coloneqq \Phi_{\calJ}^{-1}\circ \phi_{\calJ,\calJ'}''\circ \Phi_{\calJ'}\ .
	\end{align}
	Since the morphisms $\phi'_{\calQ,\calQ'}, \phi''_{\calQ,\calQ'}$ clearly
	form a direct system and every element in $\cqg{X}$ is contained in the image of $\psi_{\calJ}$
	for some $\calJ$, we obtain the following colimit realization of the continuum Kac--Moody 
	algebra $\cqg{X}$.
	
	\begin{theorem}\label{thm:colim}
		The morphisms $\psi_{\calJ}$ induce a canonical isomorphism of Lie algebras 
		\begin{align}
			\psi_X\colon \colim_{\scriptscriptstyle(\calQ,\calJ)\in\mathsf{Sh}(X)}\, \g_{\calQ}\to \cqg{X} \ .
		\end{align}
	\end{theorem}
	
	\begin{example}
		In the case of $X=\R, S^1$, we compare our construction with the Lie algebras appearing in 
		\cite{sala-schiffmann-17}, \ie the Lie algebras $\sl(\R)$ and $\sl(S^1)$ underlying of the 
		quantum group $\bfU_\upsilon(\sl(S^1))$.
		
		If $X=\R$, one checks easily that $\g_\R$ coincides 
		with the Lie algebra $\sl(\R)$ introduced in Section~\ref{ss:semigroup-line}.
		Consider now the case $X=S^1$. By Theorem~\ref{thm:main}, one 
		checks immediately that the two Lie algebras do not coincide, but 
		their difference is reduced to the elements $\xpm{S^1}$.
		More precisely, let $\ol{\g}_{S^1}$ be the subalgebra in $\g_{S^1}$ generated by the elements 
		$\xpm{\ia}$ and $\xz{\ia}$, $\ia\neq S^1$. 
		Note that the elements $\xpm{S^1}$ and $\xz{S^1}$ generate a \textit{Heisenberg Lie algebra $\heis$ 
			of order one} (cf.\  \cite[Section 2.8]{kac-90}). 
		It is then clear that $\g_{S^1}=\ol{\g}_{S^1}\oplus\heis$ and there is a canonical 
		embedding $\sl(S^1)\to\g_{S^1}$, whose image is $\ol{\g}_{S^1}\oplus\bsfld\cdot \xz{S^1}$. 
	\end{example}

	\subsection{Continuum Kac--Moody algebras with marked support}
	
	We conclude this section by mentioning that Theorem~\ref{thm:main} can be easily extended to
	the case of a {\em vertex space with (possibly infinitely many) marked points}. This shall
	be thought of as a generalization of the case of intervals with integral or rational boundaries. 
	
	Let $Y\subseteq X$ be a subset. 
	We say that an interval $\alpha$ is \textit{supported on $Y$} if $\partial\ia\coloneqq\overline{\ia}\smallsetminus \ia^\circ \subset Y$. 
	We denote by $\intsf{X}{Y}$ the set of all intervals in $X$ supported on $Y$. 
	It is clear that $\intsf{X}{Y}$ is a subsemigroup of $\intsf{X}{}$ and the restrictions
	of $\kappa_X$ and $\xi_X$ to it gives rise to a good Cartan semigroup
	$\cq{X,Y}\coloneqq(\intsf{X}{Y}, \kappa_X,\xi_X)$. 
	%
	Finally, the continuum Kac--Moody algebra of $X$ 
	\textit{supported on $Y$} is the semigroup Lie algebra 
	$\g_{X,\, Y}\coloneqq \g(\cq{X,Y})$.
	
	One checks easily that an analogue of Theorem~\ref{thm:main} holds for $\g_{X,\, Y}$, 
	which can therefore presented in terms of explicit Serre relations involving only 
	intervals supported on $Y$. In particular, $\g_{X,\, Y}$ can be thought of as a subalgebra 
	of $\cqg{X}$ and, for $Y'\subseteq Y$, one has $\g_{X,\, Y'}\subseteq\g_{X,\, Y}$.
	
	\begin{example}
		For $X=\R$ and $Y=\Z,\Q$, we obtain the sets of intervals with integral and rational 
		boundaries defined in Remark~\ref{rmk:slK}. More generally, we can consider
		integral and rational points in an arbitrary vertex space $X$. Namely, let $\KZQ$. 
		We say that a point $x\in X$ is a \textit{$\K$-point} if 
		there exists a chart $(U, A, \phi)$  around $x$ such that $\phi_i(x)\in \K$ holds for any $i$.
		We denote the subsets of integral and rational points by $X_\Z$ and $X_\Q$, respectively. 
		In analogy with the case of $\sl(\Z)$ and $\sl(\Q)$, one can consider 
		$\g_{X,X_\Z}$ and $\g_{X,X_\Q}$. Note that, in contrast with the case of $\cqg{X}$, 
		these Lie algebras have countable dimensions.
	\end{example}
	
	\begin{example}
		Let $x\in X$. Clearly, a loop centered a $x$ exists if and only if $x$ is contained in the 
		image of a circle in $X$. In this case, it follows that $\g_{X,\{x\}}\simeq\heis$.
		Conversely, if $x$ is not contained in a circle, $\g_{X,\{x\}}=\{0\}$.
	\end{example}
	
	\begin{example}
		The finite supports provide an alternative approach to the 
		symmetric Borcherds--Kac--Moody subalgebras. Namely, let $(\calQ,\calJ)\in\mathsf{Sh}(X)$ 
		such that $\bigcup_{\ia\in\calJ}\ia$ is connected and set $\partial\calJ\coloneqq\bigcup_{\ia\in\calJ}\partial\ia$.
		Then, we have $\g_{\calQ}\simeq\g(\calJ)=\g_{X,\partial\calJ}$ and finally
		\begin{align}
			\cqg{X}=\colim_{\scriptscriptstyle Y\subset X, \, |Y|<\infty} \g_{X,Y}\ .
		\end{align}
	\end{example}

	\addtocontents{toc}{\protect\setcounter{tocdepth}{1}}
	\appendix
	
	\bigskip\section{Generalities on partial semigroups}\label{app:sgp}
	
	In this section, we review several basic notions related to partial semigroups,
	following \cite{appel-toledano-19a}. Note that Lemmas~\ref{lem:cancellation-associative}
	and \ref{lem:cancellation-associative-2} are instrumental to the notion of admissible
	pair from Definition~\ref{def:admissible-pair}.
	
	\subsection{Basic definitions}\label{ss:sgp}
	
	\begin{definition}\label{def:partial-semigroup}
		A \textit{partial semigroup} is a tuple $(\sgp, \dsgp_{\sigma}, \sigma)$, where $\sgp$ is a set, 
		$\dsgp_{\sigma}\subseteq\sgp\times\sgp$ a subset, and $\sigma\colon \dsgp_{\sigma}\to\sgp$ 
		a map such that, for any $\alpha,\beta,\gamma\in\sgp$, 
		\begin{align}
			\sigma(\sigma(\alpha,\beta),\gamma)=\sigma(\alpha,\sigma(\beta,\gamma))
		\end{align}
		when both sides are defined, that is if the pairs $(\alpha,\beta)$, $(\sigma(\alpha,\beta),\gamma)$, 
		$(\beta,\gamma)$, $(\alpha,\sigma(\beta,\gamma))$ belong to $\dsgp_{\sigma}$. We say that a partial semigroup is \textit{total} if $\dsgp_{\sigma}=\sgp\times\sgp$.
		
		A partial semigroup is \textit{commutative} if $\dsgp_{\sigma}$ is symmetric, \ie 
		$(\alpha,\beta)\in\dsgp_{\sigma}$ if and only if $(\beta,\alpha)\in\dsgp_{\sigma}$, 
		in which case $\sigma(\alpha,\beta)=\sigma(\beta,\alpha)$.
	\end{definition}
	
	
	\begin{example}
		Let
		\begin{align}
			\rtl_+\coloneqq\bigoplus_{i\in\bfI}\Z_{\geqslant0}\, {\alpha}_i\subseteq{\h}^\ast\ ,
		\end{align}
		be the root lattice of a Kac--Moody algebra $\g$ and $\rts_+\coloneqq \{\alpha\in\rtl_+\setminus\{0\}\;\vert \; {\g}_{\alpha}\neq0\}$ 
		the set of positive roots (cf.\ Section~\ref{ss:km}). Then, $\rts_+$ is naturally endowed with a structure of partial semigroup $\sigma$, 
		induced by $\rtl_+$. That is, for any $\alpha,\beta\in\rts_+$, we set
		\begin{align}
			\sigma(\alpha,\beta)=
			\begin{cases}
				\alpha+\beta & \text{if } \alpha+\beta\in\rts_+ \ ,\\[4pt]
				\text{  \;n.d. } & \text{otherwise}\ .
			\end{cases}
		\end{align}
	\end{example}
	
	
	\begin{remark}
		It is common in the literature (see \eg \cite{evseev}) to assume that the semigroup law 
		$\sigma$ is strongly associative, \ie for any $\alpha,\beta, \gamma\in\sgp$,
		\begin{align}
			(\alpha,\beta), (\sigma(\alpha,\beta),\gamma)\in\dsgp_{\sigma}
			\quad\text{if and only if}\quad(\beta,\gamma), 
			(\alpha, \sigma(\beta,\gamma))\in\dsgp_{\sigma}\ .
		\end{align}
		This definition is stronger than the one given above, and is not suited for our purposes, 
		since it does not hold for root systems. For instance, in the root system of $\mathfrak{sl}(4)$, 
		$\alpha_2+\alpha_1$ and $(\alpha_2+\alpha_1)+\alpha_3$ are defined, but clearly $\alpha_1+\alpha_3$ is not. 
	\end{remark}
	
	
	We briefly recall the straightforward notions of a morphism of partial semigroups and of 
	a partial subsemigroup.
	Let $(\sfS,\sigma), (\sfT,\tau)$ be partial semigroups. A \textit{morphism} $\phi\colon (\sfS,\sigma)\to(\sfT,\tau)$ 
	is a map such that $(\alpha,\beta)\in\dsgp_{\sigma}$ if and only if $(\phi(\alpha),\phi(\beta))\in\sfT
	^{(2)}_{\tau}$, and $\phi(\sigma_{\sgp}(\alpha,\beta))=\sigma_{\sfT}(\phi(\alpha),\phi(\beta))$ for any 
	$(\alpha,\beta)\in\dsgp_{\sigma}$.
	
	Any subset $\sgp'\subseteq\sgp$ inherits a partial semigroup structure. Namely, we denote by $\sft(\sgp')$ 
	the semigroup with underlying set $\sgp'$, 
	\begin{align}
		\sft(\sgp')^{(2)}_{\sigma}\coloneqq \{(\alpha,\beta)\in\sgp'\times\sgp'\;\vert\; (\alpha,\beta)\in\dsgp_{\sigma}\;\text{and}\;
		\sigma(\alpha,\beta)\in\sgp'\}\ ,
	\end{align}
	and semigroup law induced by that of $\sgp$. Note that a priori $\sft(\sgp')^{(2)}_{\sigma}\subseteq (\sgp'\times\sgp')\cap\dsgp_{\sigma}$. The corresponding embedding $\sft(\sgp')\to\sgp$ is a morphism of semigroups if and only if $\sgp'$ is a 
	\textit{sub--semigroup} of $\sgp$, \ie if 
	$(\alpha,\beta)\in(\sgp'\times\sgp')\cap\dsgp_{\sigma}$ implies $\sigma(\alpha,\beta)\in
	\sgp'$ (which means exactly $\sft(\sgp')^{(2)}_{\sigma}= (\sgp'\times\sgp')\cap\dsgp_{\sigma}$).
	
	Finally, for any $\alpha\in\sgp$, set
	\begin{align}
		\sgp^{(2)}_{\sigma,\, \alpha}\coloneqq\{(\beta,\gamma)\in\dsgp\:\vert\;\sigma(\beta,\gamma)=\alpha\} \ .
	\end{align}
	
	\begin{definition}\label{def:saturated}
		We say that a subset $\sgp'\subseteq\sgp$ is \textit{saturated} if $\sgp^{(2)}_{\sigma,\, \alpha}\subseteq\sgp'\times \sgp'$ for any $\alpha\in\sgp'$.
	\end{definition}
	
	
	\subsection{Partial semigroups with cancellation laws}
	
	\begin{definition}\label{def:cancellation}
		A partial semigroup $(\sgp,\sigma)$ is (right) {\em cancellative} if, for any two 
		pairs $(\alpha,\beta), (\alpha',\beta)\in\dsgp_{\sigma}$,
		\begin{align}
			\sigma(\alpha,\beta)=\sigma(\alpha',\beta)\,\Longrightarrow\,\alpha=\alpha' \ .
		\end{align} 
		Then, a \textit{(right)} \textit{partial cancellation law} on $\sgp$ is a partial map $\nu\colon \sgp\times\sgp\to\sgp$ defined 
		on a (possibly empty) subset $\sgp_{\nu}^{(2)}\subseteq\sgp\times\sgp$ such that 
		\begin{itemize}
			\item
			for any $(\alpha,\beta)\in\dsgp_{\nu}$, then, whenever defined, 
			$\sigma(\nu(\alpha,\beta),\beta)=\alpha$, and $\nu(\alpha,\nu(\alpha,\beta))=\beta$;
			\item
			for any $\alpha,\beta,\gamma\in\sgp$, then
			\begin{align}\label{eq:cancellation-associative}
				\begin{aligned}
					\nu(\sigma(\alpha,\beta),\gamma)=\sigma(\alpha,\nu(\beta,\gamma)) \ ,\\
					\sigma(\nu(\alpha,\beta),\gamma)=\nu(\alpha,\nu(\beta,\gamma))\ ,\\
					\nu(\alpha,\sigma(\beta,\gamma))=\nu(\nu(\alpha,\beta),\gamma) \ ,
				\end{aligned}
			\end{align}
			when both sides are defined.
		\end{itemize}
		We say that $\nu$ is \textit{strong} if, for any $\alpha,\beta,\gamma\in\sgp$, 
		$\sigma(\alpha,\beta)=\gamma$
		if and only if $\alpha=\nu(\gamma,\beta)$.
	\end{definition}
	
	In particular, if $\nu$ is strong, one has $\nu(\sigma(\alpha,\beta),\beta)=\alpha$ for any 
	$\alpha, \beta\in\sgp$.
	Left partial cancellation laws are similarly defined.
	
	
	\begin{remark}
		Note that every cancellative partial semigroup is endowed with a standard cancellation law. Namely, let $(\sgp,\sigma)$ be a right cancellative partial semigroup. Then, for any $\alpha,\beta\in\sgp$, 
		there is at most one element $\gamma\in\sgp$ such that $(\gamma,\beta)\in\sgp_{\sigma}^{(2)}$ and 
		$\sigma(\gamma,\beta)=\alpha$, \ie $\vert \sgp_{\sigma,\,\alpha}^{(2)}\cap\sgp\times\{\beta\}\vert \leq1$. 
		Set
		\begin{align}
			\sgp_{\nu}^{(2)}\coloneqq \{(\alpha,\beta)\in\sgp\times\sgp\;\vert \;\vert \sgp_{\sigma,\,\alpha}^{(2)}\cap\sgp\times\{\beta\}\vert=1\}\ .
		\end{align}
		Then the partial map $\nu\colon \sgp_{\nu}^{(2)}\subseteq\sgp\times\sgp\to\sgp$ defined by 
		$\nu(\alpha,\beta)=\gamma$, where $\gamma$ is the only element in $\sgp_{\sigma,\,\alpha}^{(2)}
		\cap\sgp\times\{\beta\}$, is a right partial cancellation law.
	\end{remark}
	
	\subsection{Commutative partial semigroups}
	Let $(\sgp,\sgpp)$ be a commutative partial semigroup with a maximal cancellation law 
	$\sgpm\colon \sgp\times\sgp\to\sgp$. To alleviate the notation, for any $\alpha,\beta\in\sgp$, we write 
	$\alpha\sgpp\beta$ and $\alpha\sgpm\beta$ in place of $\sgpp(\alpha,\beta)$ and $\sgpm(\alpha,\beta)$.
	
	\begin{definition}\label{def:positive-semigroup}
		An element $0\in\sgp$ is a \textit{partial zero} if, whenever defined, $\alpha\sgpp0=\alpha=0\sgpp\alpha$ 
		for any $\alpha\in \sgp$. We denote by $\mathsf{Z}(\sgp)$ the set of partial zeros in $\sgp$.
		We say that $(\sgp,\sgpp,\sgpm)$ is \textit{positive} if
		the following holds:
		\begin{enumerate}\itemsep0.15cm
			\item \label{def:positive-semigroup-(1)} $\mathsf{Z}(\sgp)=\emptyset$;
			\item \label{def:positive-semigroup-(2)} the elements
			$\alpha\sgpm\alpha$ and $(\alpha\sgpm\beta)\sgpm\alpha$ are never defined;
			\item \label{def:positive-semigroup-(3)} the elements $\alpha\sgpm\beta$ and $\beta\sgpm\alpha$ are never simultaneously defined;
			\item \label{def:positive-semigroup-(4)} up to commutation, $\sgpp$ is \textit{strongly associative}, \ie $(\alpha\sgpp\beta)\sgpp\gamma$ 
			is defined if and only if either $\alpha\sgpp(\beta\sgpp\gamma)$ or $\beta\sgpp(\alpha\sgpp\gamma)$ is defined.
		\end{enumerate}
	\end{definition}
	
	We shall make use of the following elementary lemma.
	\begin{lemma}\label{lem:cancellation-associative}
		Assume that $(\sgp,\sgpp,\sgpm)$ is positive. Then 
		\begin{enumerate} \itemsep0.2cm
			\item \label{item:I1} the elements $(\gamma\sgpm\beta)\sgpm\alpha, \gamma\sgpm(\alpha\sgpp\beta), (\gamma\sgpm\alpha)\sgpm\beta$ are either all of them not defined or at least two of them are simultaneously defined;
			\item \label{item:I2} at most three of the elements $\alpha\sgpm(\gamma\sgpm\beta), \beta\sgpm(\gamma\sgpm\alpha), (\alpha\sgpm\gamma)\sgpp\beta, \alpha\sgpp(\beta\sgpm\gamma),  (\alpha\sgpp\beta)\sgpm\gamma$ are simultaneously defined.
		\end{enumerate}
	\end{lemma}
	
	\begin{proof}
		By strong associativity and thanks to Formula~\eqref{eq:cancellation-associative}, 
		the existence of an element implies
		that of at least another one. Namely, set $x=(\gamma\sgpm\beta)\sgpm\alpha$. 
		Then, $\gamma=(x\sgpp\alpha)\sgpp\beta$ and therefore either $\gamma=x\sgpp(\alpha\sgpp\beta)$ or $\gamma=(x\sgpp\beta)\sgpp\alpha$.
		In the first case, we get $\gamma\sgpm(\alpha\sgpp\beta)$ and in the second one we get $(\gamma\sgpm\alpha)\sgpm\beta$.
		Similarly for the other cases.
		
		Note that, by Definition~\ref{def:positive-semigroup}--\eqref{def:positive-semigroup-(3)}, the elements $\alpha\sgpm(\gamma\sgpm\beta)$ 
		and $\alpha\sgpp(\beta\sgpm\gamma)$ (resp. $\beta\sgpm(\gamma\sgpm\alpha)$ and $(\alpha\sgpm\gamma)\sgpp\beta$) 
		are never simultaneously defined. This proves the assertion.
	\end{proof}
	
	By using the same arguments as in the proof of the above lemma, we get the following.
	\begin{lemma}\label{lem:cancellation-associative-2}
		Under the same hypothesis of Lemma~\ref{lem:cancellation-associative}, the elements in \eqref{item:I1} (resp.\ in \eqref{item:I2}) pairwise coincide (whenever defined).\footnote{For example, set $y=(\gamma\sgpm\alpha)\sgpm\beta$, $z=\gamma\sgpm(\alpha\sgpp\beta)$. Then
			\begin{align}
				y=(((x\sgpp\alpha)\sgpp\beta)\sgpm\alpha)\sgpm\beta
				=((\beta\sgpp((x\sgpp\alpha)\sgpm\alpha))\sgpm\beta
				=(\beta\sgpp x)\sgpm\beta=x
			\end{align}
			where the second equality is \eqref{eq:cancellation-associative}. Similarly for $x=z=y$.}
	\end{lemma}
	
	\begin{example}\label{ex:semigroup-line}
		\hfill
		\begin{enumerate}
			\item
			Every root system is a semigroup with respect to the operations $\oplus,\ominus$ induced by the inclusion
			$\rts_+\subset(\rtl,+,-)$.
			\item
			Let $\KZQR$. In Section~\ref{sss:definition-sl(K)}, we introduced the set $\intsf{\R}{\K}$ together with the partial operations $\sgpp$ and $\sgpm$ (cf.\ Formulas \eqref{eq:sgp-intv-add} and \eqref{eq:sgp-intv-sub}). Then $\csgp(\K)\coloneqq(\intsf{\R}{\K}, \sgpp, \sgpm)$ is a semigroup.
		\end{enumerate}
	\end{example}
	
	
	\bigskip\section{Proof of Proposition~\ref{prop:relationssl}}\label{app:relationssl}
	
	In this section, we complete the proof of Proposition~\ref{prop:relationssl}, providing 
	a new presentation for the Lie algebra of the line. 
	Specifically, we shall prove that the relations \eqref{eq:kac-rel-3-new} and 
	\eqref{eq:serre-new} hold in $\sl(\R)$.
	
	\subsection*{Relation~(\ref{eq:serre-new})}   
	We first observe that, if $J_1, J_2$ are nested, $J_1\sgpp J_2$ is not defined and
	\eqref{eq:serre-new} coincides with \eqref{eq:nest-finite}. If $J_1\intnext J_2$ or
	$J_2\intnext J_1$, then \eqref{eq:serre-new} coincides with \eqref{eq:Joining-e-finite} and
	\eqref{eq:Joining-f-finite}. It remains to prove that, if $J'_1\intcap J'_2$, then
	\begin{align}
		[e_{J'_1},e_{J'_2}]=0=[f_{J'_1}, f_{J'_2}]\ .
	\end{align}
	In this case, there exists an interval $J_2$ such that $J'_1=J_1\sgpp J_2$, with $J_1=J'_1\sgpm J_2$ 
	and $J_1\intnext J_2$, and $J'_2=J_2\sgpp J_3$, with $J_3=J'_2\sgpm J_2$ and $J_2\intnext J_3$.
	It is therefore equivalent to show that
	\begin{align}
		[[e_{J_1}, e_{J_2}],[e_{J_2}, e_{J_3}]]=0=[[f_{J_1}, f_{J_2}],[f_{J_2}, f_{J_3}]]\ .
	\end{align}
	By Jacobi identity,
	\begin{align}
		[[e_{J_1}, e_{J_2}],[e_{J_2}, e_{J_3}]]=
		-[[e_{J_2},e_{J_2\sgpp J_3}], e_{J_1}]
		-[[e_{J_2\sgpp J_3}, e_{J_1}], e_{J_2}]
		=0\ .
	\end{align}
	Now, $[e_{J_2},e_{J_2\sgpp J_3}]=0$ by \eqref{eq:nest-finite} since $J_2\rsub J_2\sgpp J_3$, hence the first 
	quantity on the RHS is zero. Since $J_1\intnext J_2\sgpp J_3$, we have $[e_{J_2\sgpp J_3}, e_{J_1}]=e_{J_1\sgpp J_2\sgpp J_3}$, 
	hence by \eqref{eq:nest-finite} the second quantity on the RHS is zero as well. The computation for $f$ is similar. Therefore, 
	\eqref{eq:serre-new} holds.
	
	\subsection*{Relation~(\ref{eq:kac-rel-3-new})} 
	We shall now prove that relation \eqref{eq:kac-rel-3-new} holds. 
	We proceed case--by--case, according to the relative position of 
	two arbitrary intervals. If $J_1=J_2$, $J_1\intnext J_2$, $J_2\intnext J_1$, 
	or $J_1\perp J_2$, this is clear. We shall analyze the other cases separately. 
	
	\subsection*{Case $J_1\rsub J_2$} In this case, $\abf{\cf{J_1}}{\cf{J_2}}=0$, $\abf{\cf{J_2}}{\cf{J_1}}=1$, $
	\rbf{\cf{J_1}}{\cf{J_2}}=1$,  
	$J_2\sgpm J_1$ is defined, and $J_1\sgpp J_2, J_1\sgpm J_2$ are not. We have to show that
	\begin{align}
		[e_{J_1}, f_{J_2}]=-f_{J_2\sgpm J_1}\ .
	\end{align}
	
	By definition, $J_2=J_1\sgpp J_3$, where $J_3=J_2\sgpm J_1$ and $J_1\intnext J_3$. 
	Therefore, by \eqref{eq:Joining-f-finite}, it is equivalent to show that
	\begin{align}
		[e_{J_1}, [f_{J_1},f_{J_3}]]=f_{J_3}\ .
	\end{align}
	By Jacobi identity, we have
	\begin{align}
		[e_{J_1}, [f_{J_1},f_{J_3}]]&=-[f_{J_1}, [f_{J_3}, e_{J_1}]]-[f_{J_3},[e_{J_1},f_{J_1}]]=
		[h_{J_1}, f_{J_3}]=-\rbf{\cf{J_1}}{\cf{J_3}}f_{J_3}\ ,
	\end{align}
	where the second identity follows from \eqref{eq:kac-rel-2}. It remains to observe that 
	$\rbf{\cf{J_1}}{\cf{J_3}}=-1$. 
	\subsection*{Case $J_2\rsub J_1$} This case is identical to the previous one, but we will prove it
	for completeness. In this case, $\abf{\cf{J_1}}{\cf{J_2}}=1$, $\abf{\cf{J_2}}{\cf{J_1}}=0$, $
	\rbf{\cf{J_1}}{\cf{J_2}}=1$,  
	$J_1\sgpm J_2$ is defined, and $J_1\sgpp J_2, J_2\sgpm J_1$ are not. We have to show that
	\begin{align}
		[e_{J_1}, f_{J_2}]=-e_{J_1\sgpm J_2}\ .
	\end{align}
	
	By definition, $J_1=J_2\sgpp J_3$, where $J_3=J_1\sgpm J_2$ and $J_2\intnext J_3$. 
	Therefore, by \eqref{eq:Joining-e-finite}, it is equivalent to show that
	\begin{align}
		[[e_{J_2},e_{J_3}], f_{J_2}]=-e_{J_3}\ .
	\end{align}
	By Jacobi identity, we have
	\begin{align}
		[[e_{J_2},e_{J_3}], f_{J_2}]&=-[[e_{J_3}, f_{J_2}],e_{J_2}]-[[f_{J_2}, e_{J_2}],e_{J_3}]
		=[h_{J_2},e_{J_3}]=\rbf{\cf{J_2}}{\cf{J_3}}e_{J_3}\ .
	\end{align}
	It remains to observe that $\rbf{\cf{J_2}}{\cf{J_3}}=-1$, since $J_2\intnext J_3$. 
	\subsection*{Case $J_1\lsub J_2$} In this case, $\abf{\cf{J_1}}{\cf{J_2}}=1$, $\abf{\cf{J_2}}{\cf{J_1}}=0$, 
	$\rbf{\cf{J_1}}{\cf{J_2}}=1$,  
	$J_2\sgpm J_1$ is defined, and $J_1\sgpp J_2, J_1\sgpm J_2$ are not. We have to show that
	\begin{align}
		[e_{J_1}, f_{J_2}]=f_{J_2\sgpm J_1}\ .
	\end{align}
	By definition, $J_2=J_1\sgpp J_3$, where $J_3=J_2\sgpm J_1$ and in this case $J_3\intnext J_1$. 
	Therefore, by \eqref{eq:Joining-f-finite}, it is equivalent to show that
	\begin{align}
		[e_{J_1}, [f_{J_3},f_{J_1}]]=-f_{J_3}\ .
	\end{align}
	By Jacobi identity, we have
	\begin{align}
		[e_{J_1}, [f_{J_3},f_{J_1}]]&=-[f_{J_3}, [f_{J_1}, e_{J_1}]]-[f_{J_1},[e_{J_1},f_{J_3}]]=
		-[h_{J_1}, f_{J_3}]=\rbf{\cf{J_1}}{\cf{J_3}}f_{J_3}\ .
	\end{align}
	It remains to observe that $\rbf{\cf{J_1}}{\cf{J_3}}=-1$. 
	The case $J_2\lsub J_1$ is identical.
	\subsection*{Case $J_1<J_2$} In this case, $\abf{\cf{J_1}}{\cf{J_2}}=0$, $\abf{\cf{J_2}}{\cf{J_1}}=0$, 
	$\rbf{\cf{J_1}}{\cf{J_2}}=0$,  and $J_1\sgpp J_2$, $J_2\sgpm J_1$, $J_1\sgpm J_2$ are not defined. 
	We have to show that
	\begin{align}
		[e_{J_1}, f_{J_2}]=0\ .
	\end{align}
	By definition, $J_2=J_3\sgpp J_1\sgpp J_4$ with $J_3\intnext J_1\intnext J_4$, 
	where $J_3, J_4$ are the two connected components of $J_2\setminus J_1$. It is
	therefore equivalent to show
	\begin{align}
		[e_{J_1}, [f_{J_3},f_{J_1\sgpp J_4}]]=0\ .
	\end{align}
	By Jacobi identity,
	\begin{align}
		[e_{J_1}, [f_{J_3},f_{J_1\sgpp J_4}]]=
		-[f_{J_3},[f_{J_1\sgpp J_4}, e_{J_1}]]
		-[f_{J_1\sgpp J_4},[e_{J_1}, f_{J_3}]]
		=0\ ,
	\end{align}
	since $J_1\rsub J_1\sgpp J_4$ and $J_3\intnext J_1$. The case $J_2<J_1$ is identical.
	\subsection*{Case $J_1\intcap J_2$} In this case, $\abf{\cf{J_1}}{\cf{J_2}}=-1$, 
	$\abf{\cf{J_2}}{\cf{J_1}}=1$, $\rbf{\cf{J_1}}{\cf{J_2}}=0$,  
	and $J_1\sgpp J_2$, $J_2\sgpm J_1$, $J_1\sgpm J_2$ are not defined. We have to show that
	\begin{align}
		[e_{J_1}, f_{J_2}]=0\ .
	\end{align}
	In this case, there exists an interval $J'_2$ such that $J_1=J'_1\sgpp J'_2$, with $J'_1=J_1\sgpm J'_2$ 
	and $J'_1\intnext J'_2$, and $J_2=J'_2\sgpp J'_3$, with $J'_3=J_2\sgpm J'_2$ and $J'_2\intnext J'_3$.
	It is therefore equivalent to show that
	\begin{align}
		[e_{J'_1\sgpp J'_2}, [f_{J'_2}, f_{J'_3}]]=0\ .
	\end{align}
	By Jacobi identity,
	\begin{align}
		[e_{J'_1\sgpp J'_2}, [f_{J'_2}, f_{J'_3}]]=
		-[f_{J'_2}, [f_{J'_3}, e_{J'_1\sgpp J'_2}]]
		-[f_{J'_3}, [e_{J'_1\sgpp J'_2}, f_{J'_2}]]=0\ ,
	\end{align}
	since $J'_1\sgpp J'_2\intnext J'_3$ and $J'_2\lsub J'_1\sgpp J'_2$.
	The case $J_2\intcap J_1$ is identical.
	
	\bigskip\section{Proof of Proposition~\ref{pr:sgp-serre-general}}\label{app:sgp-serre-general}
	
	In this section, we complete the proof of Proposition~\ref{pr:sgp-serre-general},
	providing necessary and sufficient conditions for the Serre relations to hold in a 
	semigroup Lie algebra.\\
	
	For any $\oa,\ob\in\sgp$, set
	\begin{align}
		X_{\pm,\, \oa, \,\ob} & \coloneqq [\xpm{\oa}, \xpm{\ob}]-
		\mu_{\pm}(\oa,\ob)\, \xpm{\oa\sgpp\ob}\ .
	\end{align}
	We denote by $S_{\pm,\alpha,\beta}$ the subspace spanned by the elements 
	$X_{\pm, \, \oa, \, \ob}$ with $\oa\in \csgp_{\leqslant\alpha}^{\pm}$, $\ob\in \csgp_{\leqslant\beta}^{\pm}$. We shall prove that, for any $\oc\in\sgp$, 
	$[S_{\pm,\alpha,\beta}, \xmp{\oc}]\subseteq S_{\pm,\alpha,\beta}$.
	
	By the Jacobi identity, the commutation relations \eqref{eq:sgp-lie-3} and \eqref{eq:sgp-lie-4}, the commutator
	of $X_{\pm,\, \oa, \, \ob}$ and $\xmp{\oc}$ is an element in $S_{\pm,\alpha,\beta}$
	if and only if the following identities hold in $L_0$ and $L_-$:
	\begin{align}\label{eq:sgp-serre-h}
		\mu_{\pm}(\oa,\ob) \drc{\oa\sgpp\ob,\oc} \xz{\oc}=
		\drc{\ob,\oc\sgpm\oa}\xi_{\mp}(\oc,\oa) \xz{\ob}
		-\drc{\oa,\oc\sgpm\ob}\xi_{\mp}(\oc,\ob) \xz{\oa}\ ,
	\end{align}
	\begin{multline}\label{eq:sgp-serre-f}
		\mu_{\pm}(\oa,\ob)\xi_{\mp}(\oc,\oa\sgpp\ob) \xmp{\oc\sgpm(\oa\sgpp\ob)}
		=\xi_{\mp}(\oc,\oa)\xi_{\mp}(\oc\sgpm\oa,\ob) 
		\xmp{(\oc\sgpm\oa)\sgpm\ob}\\[3pt]
		-\xi_{\mp}(\oc,\ob)\xi_{\mp}(\oc\sgpm\ob,\oa) \xmp{(\oc\sgpm\ob)\sgpm\oa}\ ,
	\end{multline}
	and the element
	\begin{multline}\label{eq:sgp-serre-e}
		\drc{\oa,\oc}\,\kappa(\oa,\ob) \xpm{\ob}
		+\xi_{\pm}(\ob,\oc)[\xpm{\oa},\xpm{\ob\sgpm\oc}]
		+\xi_{\mp}(\oc,\oa)\xi_{\pm}(\ob,\oc\sgpm\oa) \xpm{\ob\sgpm(\oc\sgpm\oa)}-\drc{\ob,\oc}\,\kappa(\ob,\oa) \xpm{\oa}\\[3pt]
		+\xi_{\pm}(\oa,\oc)[\xpm{\oa\sgpm\oc},\xpm{\ob}]
		-\xi_{\mp}(\oc,\ob)\xi_{\pm}(\oa,\oc\sgpm\ob) \xpm{\oa\sgpm(\oc\sgpm\ob)}
		-\mu_{\pm}(\oa,\ob)\xi_{\pm}(\oa\sgpp\ob,\oc) \xpm{(\oa\sgpp\ob)\sgpm\oc}
	\end{multline}
	belongs to $S_{\pm,\alpha,\beta}$.
	\subsection*{The identity (\ref{eq:sgp-serre-h})}  
	
	The identity is non--trivial only if $\oc=\oa\sgpp\ob$, in which case 
	$\oa=\oc\sgpm\ob$, $\ob=\oc\sgpm\oa$, and it reduces to relation~\eqref{eq:sgp-serre-general-1}, \ie one has
	\begin{align}
		\xi_{\mp}(\oa\sgpp\ob,\oa)\xz{\ob}-\xi_{\mp}(\oa\sgpp\ob,\ob)\xz{\oa}=
		\mu_{\pm}(\oa,\ob)(\xz{\oa}+\xz{\ob})\ .
	\end{align}
	and therefore,
	\begin{align}
		\xi_{\mp}(\oa\sgpp\ob,\oa)=\mu_{\pm}(\oa,\ob)=-\xi_{\mp}(\oa\sgpp\ob,\ob)\ .
	\end{align}
	
	\subsection*{The identity (\ref{eq:sgp-serre-f})}  
	By Lemma~\ref{lem:cancellation-associative-2}, it is enough to observe that 
	the elements $\oc\sgpm(\oa\sgpp\ob)$,
	$(\oc\sgpm\ob)\sgpm\oa$, and $(\oc\sgpm\oa)\sgpm\ob$
	coincide whenever defined. Therefore, \eqref{eq:sgp-serre-f} reduces to 
	\eqref{eq:sgp-serre-general-3}.
	\subsection*{The identity (\ref{eq:sgp-serre-e})}\label{sss:pf-serre-last} 
	We first consider the cases $\oc=\oa,\ob$. 
	
	Assume that $\oa\sgpp\ob$ is not defined.
	\begin{itemize}[leftmargin=1em]\itemsep0.2cm
		\item
		If $(\ob,\oa)\in\dsgp_{\sgpm}$ and $(\oa,\ob)\not\in\dsgp_{\sgpm}$, then, for $\oc=\oa$,
		we get
		\begin{align}
			\kappa(\oa,\ob)\xpm{\ob}+\xi_{\pm}(\ob,\oa)[\xpm{\oa},\xpm{\ob\sgpm\oa}]\ ,
		\end{align}
		which gives
		\begin{align}
			\kappa(\oa,\ob)=-\xi_{\pm}(\ob,\oa)\xi_{\mp}(\ob,\oa)\ .
		\end{align}
		For $\oc=\ob$, we get 
		\begin{align}
			-\kappa(\ob,\oa)\xpm{\oa}+
			\xi_{\mp}(\ob,\oa)\xi_{\pm}(\ob,\ob\sgpm\oa)\xpm{\ob\sgpm(\ob\sgpm\oa)}\ ,
		\end{align}
		which gives
		\begin{align}
			\kappa(\ob,\oa)=\xi_{\pm}(\ob,\oa)\xi_{\mp}(\ob,\ob\sgpm\oa)=
			-\xi_{\pm}(\ob,\oa)\xi_{\mp}(\ob,\oa)\ ,
		\end{align}
		where the second equality follows from \eqref{eq:sgp-serre-general-1}. 
		\item
		The case $(\ob,\oa)\not\in\dsgp_{\sgpm}$ and $(\oa,\ob)\in\dsgp_{\sgpm}$
		is identical to the previous one.
		\item If $(\oa,\ob),(\ob,\oa)\not\in\dsgp_{\sgpm}$, \eqref{eq:sgp-serre-e}
		reduces to the condition $\kappa(\oa,\ob)=0=\kappa(\ob,\oa)$. 
	\end{itemize}
	If $\oa\sgpp\ob$ is defined, it is enough to add to the previous identities the summand 
	$\xi_{\pm}(\oa\sgpp\ob,\oa)\xi_{\mp}(\oa\sgpp\ob,\ob)$.
	This proves \eqref{eq:sgp-serre-general-2}.
	
	We move to the case $\oc\neq\oa,\ob$. By Lemma~\ref{lem:cancellation-associative-2}, the elements $\oa\sgpp(\ob\sgpm\oc)$, $\ob\sgpm(\oc\sgpm\oa)$,
	$(\oa\sgpm\oc)\sgpp\ob$, $\oa\sgpm(\oc\sgpm\ob)$, 
	$(\oa\sgpp\ob)\sgpm\oc$ coincide whenever defined. In particular, it follows that
	the element \eqref{eq:sgp-serre-e} belongs to the subspace $S_{\pm,\alpha,\beta}$ if and only if
	\eqref{eq:sgp-serre-general-4} holds.

	\bigskip\section{Proof of Theorem~\ref{thm:sym-serre-rel}}\label{app:sym-serre-rel}
	
	In this section, we complete the proof of Theorem~\ref{thm:sym-serre-rel}. 
	Given a good Cartan semigroup $\csgp$, for any 
	$(\ia,\ib)\in\serre{\csgp}$, we have to show that the relation
	$[\xpm{\ia}, \xpm{\ib}]=\xi_{\mp}(\alpha\sgpp\beta,\alpha)\cdot \xpm{\ia\sgpp\ib}$
	holds in $\g(\csgp)$. By Proposition~\ref{pr:sgp-serre-general}, it is enough to show 
	that the relations \eqref{eq:sgp-serre-general-1}, 
	\eqref{eq:sgp-serre-general-2}, \eqref{eq:sgp-serre-general-3}, and \eqref{eq:sgp-serre-general-4} hold for any $\oa\in\csgp_{\leqslant\alpha}^{\pm}$, 
	$\ob\in\csgp_{\leqslant\beta}^{\pm}$, $\oc\in\sgp$.
	
	The first two follow, respectively, from the properties \eqref{eq:sym-sgp-datum-3}, \eqref{eq:sym-sgp-datum-4}, and \eqref{eq:sym-sgp-datum-2} of good Cartan semigroups. 
	In order to prove the remaining two relations, we shall proceed by analyzing different cases. 
	Recall that the two relations we are interested in are the following:
	\begin{itemize}
		\item[\eqref{eq:sgp-serre-general-3}] For any $\oa\in \csgp_{\leqslant\alpha}^{\pm}$, $\ob\in \csgp_{\leqslant\beta}^{\pm}$, $\oc\in\sgp$,
		\begin{align}
			\drc{\oc\sgpm(\oa\sgpp\ob)}
			\xi_{\pm}(\oa\sgpp\ob,\oa)\xi_{\pm}&(\oc,\oa\sgpp\ob)=\\[3pt]
			=&\drc{(\oc\sgpm\oa)\sgpm\ob}\xi_{\pm}(\oc,\oa)\xi_{\pm}(\oc\sgpm\oa,\ob)
			-\drc{(\oc\sgpm\ob)\sgpm\oa}\xi_{\pm}(\oc,\ob)\xi_{\pm}(\oc\sgpm\ob,\oa) \ .
		\end{align}
		\item[\eqref{eq:sgp-serre-general-4}] For any $\oa\in \csgp_{\leqslant\alpha}^{\pm}$, $\ob\in \csgp_{\leqslant\beta}^{\pm}$, 
		$\oc\in\sgp$, $\oc\neq\oa,\ob$,
		\begin{align}
			\xi_{\pm}(\oa,\oc)\xi_{\mp}((\oa\sgpm\oc)\sgpp\ob,\ob)
			-\xi_{\pm}(\ob,\oc)\xi_{\mp}(\oa\sgpp(\ob\sgpm\oc),\oa)&=\\[3pt]
			\drc{\ob\sgpm(\oc\sgpm\oa)}\xi_{\mp}(\oc,\oa)\xi_{\pm}(\ob,\oc\sgpm\oa)
			-\drc{\oa\sgpm(\oc\sgpm\ob)}&\xi_{\mp}(\oc,\ob)\xi_{\pm}(\oa,\oc\sgpm\ob)\\[3pt]
			\phantom{\drc{\ob\sgpm(\oc\sgpm\oa)}\xi_{\mp}(\oc,\oa)\xi_{\pm}(\ob\sgpm\oc,\oa)\qquad}
			-&\drc{(\oa\sgpp\ob)\sgpm\oc}\xi_{\mp}(\oa\sgpp\ob,\oa)\xi_{\pm}(\oa\sgpp\ob,\oc)
			\ .
		\end{align}
	\end{itemize}
	Note that, as we show in Corollary~\ref{cor:perp-comm}, 
	the relations \eqref{eq:sgp-serre-general-3} and \eqref{eq:sgp-serre-general-4} are 
	satisfied whenever $\oa\perp\ob$. Therefore, we can assume
	$\oa\not\perp\ob$.
	
	\subsection*{Proof of relation (\ref{eq:sgp-serre-general-3})} 
	
	First, note that, since $\oa\not\perp\ob$, the relation \eqref{eq:sgp-serre-general-3} is trivial whenever $\oa\sgpp\ob$ is not defined,
	since $\oc\sgpm(\oa\sgpp\ob)$, $(\oc\sgpm\oa)\sgpm\ob$, and $(\oc\sgpm\ob)\sgpm\oa$ cannot exist in this case. Indeed,  the first element
	does not exist by definition. By Lemma~\ref{lem:cancellation-associative}, the second and third element either do not exist
	or they both exist. However, the latter situation cannot occur because of condition (L1). Therefore we can assume that $\oa\sgpp\ob$ is defined
	and thus real by Remark~\ref{rem:serre-induction}. 
	
	Next, \eqref{eq:sgp-serre-general-3} is trivial whenever $\oc\sgpm(\oa\sgpp\ob)$ is not defined. Indeed, in this case
	it follows by strong associativity (cf.\ Definition~\ref{def:positive-semigroup}--\eqref{def:positive-semigroup-(4)}) that the elements $(\oc\sgpm\oa)\sgpm\ob$ 
	and $(\oc\sgpm\ob)\sgpm\oa$ are also not defined. Therefore  we can assume that $\oc\sgpm(\oa\sgpp\ob)$ is defined. 
	
	Thus, we are left considering only the case in which both $\oa\sgpp\ob$ and $\oc\sgpm(\oa\sgpp\ob)$ are defined. 
	
	\noindent
	\textit{Case $\oa,\ob\in\rsgp$.}
	We claim that, if $\oa,\ob\in\rsgp$, we can assume $\oc\in\rsgp$. Indeed, in this case $\oa\sgpp\ob$ is real and, by the reality condition, if $\oc\not\in\rsgp$, there exists a real element $\oc'$ such that $\oc\sgpm\oc'$ is defined and orthogonal to 
	$\oa\sgpp\ob$. It follows that $\oc\sgpm\oc'$ is orthogonal to $\oa$ and $\ob$ and thus, by condition (L1), $\oc'\sgpm\oa$ (resp. $\oc'\sgpm\ob$) is defined whenever
	$\oc\sgpm\oa$ (resp. $\oc\sgpm\ob$) is. Therefore, by orthogonality, we have 
	\begin{align}
		\xi_{\pm}(\oc,\oa\sgpp\ob) = & \xi_{\pm}(\oc',\oa\sgpp\ob)\ , \quad \xi_{\pm}(\oc,\oa)=\xi_{\pm}(\oc',\oa)\ , \quad \xi_{\pm}(\oc,\ob)=\xi_{\pm}(\oc',\ob)\\[2pt]
		\xi_{\pm}(\oc\sgpm\oa,\ob) = & \xi_{\pm}(\oc'\sgpm\oa,\ob)\ , \xi_{\pm}(\oc\sgpm\ob,\oa)=\xi_{\pm}(\oc'\sgpm\ob,\oa) \ .
	\end{align}
	It is also clear that 
	\begin{align}
		\drc{(\oc\sgpm\oa)\sgpm\ob}=\drc{(\oc'\sgpm\oa)\sgpm\ob}\quad \text{and}\quad\drc{(\oc\sgpm\ob)\sgpm\oa}=\drc{(\oc'\sgpm\ob)\sgpm\oa}
	\end{align}
	since both $\oa$
	and $\ob$ are orthogonal to $\oc\sgpm\oc'$\footnote{Therefore, for example, $(\oc\sgpm\oa)\sgpm\ob=((\oc\sgpm\oc')\sgpp(\oc'\sgpm\oa))\sgpm\ob=
		(\oc\sgpm\oc')\sgpp((\oc'\sgpm\oa)\sgpm\ob)$.}. This proves that the relation \eqref{eq:sgp-serre-general-3} holds for the triple $(\oa,\ob,\oc)$ if and only if
	it holds for the triple $(\oa,\ob,\oc')$. Therefore we can assume $\oc\in\rsgp$.
	
	\noindent
	\textit{Case $\oa\in\rsgp$, $\ob\in\isgp$.} Note that, in this case, $\oc$ is necessarily an element in $\isgp$.

	Assume therefore that either $(\oa,\ob,\oc)\in\rsgp\times\rsgp\times\rsgp$ or $(\oa,\ob,\oc)\in\rsgp\times\isgp\times\isgp$, with $\oa\not\perp\ob$ and $\oc\sgpm(\oa\sgpp\ob)$ defined. 
	We first observe that in any good  semigroup the conditions \eqref{eq:sgp-serre-general-3} can be simplified. 
	Namely, it follows from the admissibility condition \eqref{item:A1} from Definition~\ref{def:admissible-triple} and Lemma~\ref{lem:cancellation-associative} that either none or exactly two elements 
	among $\oc\sgpm(\oa\sgpp\ob)$, $(\oc\sgpm\oa)\sgpm\ob$, and $(\oc\sgpm\ob)\sgpm\oa$ can be 
	simultaneously defined. Recall that, by definition, $\xi_+(x,y)=\xi_-(y,x)$ and $\xi\coloneqq\xi_+$. Then, one checks easily that 
	\eqref{eq:sgp-serre-general-3} reduces to the condition 
	\begin{align}\label{eq:aux-serre-1}
		\xi(x\sgpp y,x)\xi((x\sgpp y)\sgpp z, x \sgpp y )=
		\xi((x\sgpp y)\sgpp z, x)\xi(y\sgpp z, y)\ ,
	\end{align}
	where either $(x,y,z)=(\oa,\ob, (c\sgpm\oa)\sgpm\ob))$ or $(x,y,z)=(\ob,\oa, (\oc\sgpm\ob)\sgpm\oa)$. 
	It is easy to check that this holds. Indeed,
	\begin{align}
		\xi(x\sgpp y,x)\xi((x\sgpp y)\sgpp z, x\sgpp y)
		&=\phantom{-}\xi(x\sgpp y,x)\xi(x\sgpp(y\sgpp z), x\sgpp y)\\
		&=\phantom{-}\xi(x\sgpp y,x)\xi(y\sgpp z,y)\\
		&=-\xi(x\sgpp y,y)\xi(y\sgpp z,y)\\
		&=-\xi((x\sgpp y)\sgpp z, y\sgpp z)\xi( y\sgpp z, y)\\
		&=-\xi(x\sgpp(y\sgpp z),y\sgpp z)\xi(y\sgpp z,y)\\
		&=\phantom{-}\xi(x\sgpp(y\sgpp z),x)\xi(y\sgpp z, y) \ ,
	\end{align}
	where the second and fourth identities rely on \eqref{eq:sym-sgp-datum-5}, while the third and sixth ones 
	rely on \eqref{eq:sym-sgp-datum-3} and \eqref{eq:sym-sgp-datum-4}, and the first and fifth ones follow by associativity.
	Note that we are allowed to use \eqref{eq:sym-sgp-datum-5} since the elements $\oc$, $\oa\sgpp\ob$, and $\oc\sgpm\oa$ (resp.\ $\oc\sgpm\ob$)
	can never be locally degenerate.
	
	\subsection*{Proof of relation (\ref{eq:sgp-serre-general-4})} 
	
	First, we claim that, if $\oa,\ob\in\rsgp$, we can assume $\oc\in\rsgp$, since \eqref{eq:sgp-serre-general-4}
	is trivial otherwise. Indeed, if $\oc\not\in\rsgp$, the elements $\oa\sgpm\oc$, $\ob\sgpm\oc$,
	and $(\oa\sgpp\ob)\sgpm\oc$ are certainly not defined, since $\sgp_{\leqslant a}$ 
	and $\sgp_{\leqslant b}$ are contained in $\rsgp$. Now assume that $\oa\sgpm(\oc\sgpm\ob)$ is defined and $\xi_{\pm}(\oa,\oc\sgpm\ob)\neq0$. 
	Then, $\oc\sgpm\ob$ is defined, is real (since $\oa$ is), and belongs to $\csgp_{\leqslant a}^{\pm}$. 
	It follows that $\oc=(\oc\sgpm\ob)\sgpp\ob$ is necessarily real, because the pair
	$(\oc\sgpm\ob, \ob)$ belongs to $\serreadm{\csgp}$ by Remark~\ref{rem:serre-induction}. 
	Therefore, either $\oa\sgpm(\oc\sgpm\ob)$ is not defined or $\xi_{\pm}(\oa,\oc\sgpm\ob)=0$, and
	similarly for $\ob\sgpm(\oc\sgpm\oa)$. It follows that \eqref{eq:sgp-serre-general-4} is trivial if $\oc\not\in\rsgp$.
	
	Assume therefore that either $(\oa,\ob,\oc)\in\rsgp\times\rsgp\times\rsgp$ or $(\oa,\ob,\oc)\in\rsgp\times\isgp\times\sgp$, with
	$\oa\not\perp\ob$. By Lemma~\ref{lem:cancellation-associative} and the admissibility condition (\ref{item:A2}) in Definition~\ref{def:admissible-pair},
	either none or exactly two elements among $(\oa\sgpp\ob)\sgpm\oc$, $(\oa\sgpm\oc)\sgpp\ob$, $\oa\sgpp(\ob\sgpm\oc)$,
	$\oa\sgpm(\oc\sgpm\ob)$, and $\ob\sgpm(\oc\sgpm\oa)$
	are simultaneously defined. Then, one checks easily that \eqref{eq:sgp-serre-general-4}, similarly to the case of 
	\eqref{eq:sgp-serre-general-3}, reduces to the conditions
	\begin{align}
		\label{eq:aux-serre-2}
		\xi(x\sgpp z,(x\sgpp z)\sgpp y)
		\xi((x\sgpp z)\sgpp y, y\sgpp z)
		&=\xi(y,y\sgpp z)\xi(x\sgpp z,x) \ ,\\
		\label{eq:aux-serre-3}
		\xi(x\sgpp(y\sgpp z),y\sgpp z)\xi(y,x\sgpp y)&=
		\xi(z,y\sgpp z)\xi(x\sgpp(y\sgpp z),y) \ ,\\
		\label{eq:aux-serre-4}
		\xi(x,(x\sgpp y)\sgpp z)\xi(y,y\sgpp z)&=
		\xi(y,(x\sgpp y)\sgpp z)\xi(x,x\sgpp z)\ ,
	\end{align}
	whenever all terms are defined. Therefore, we are left to prove the identities \eqref{eq:aux-serre-2},
	\eqref{eq:aux-serre-3}, and \eqref{eq:aux-serre-4}. One checks by direct inspection that, as before, these 
	follow directly from properties \eqref{eq:sym-sgp-datum-3}, \eqref{eq:sym-sgp-datum-4}, and 
	\eqref{eq:sym-sgp-datum-5}.
	
	\bigskip\section{Proof of Proposition~\ref{prop:topologicalquiver}}\label{app:topologicalquiver}
	
	In this section,  we complete the proof of Proposition~\ref{prop:topologicalquiver}, 
	showing the continuum quiver $\cq{X}$ is a good Cartan semigroup. We shall show that 
	$\cq{X}$ satisfies the conditions $(1)$--$(5)$ from Definition~\ref{def:sym-sgp}. 
	We proved that \eqref{def:sym-sgp-1}, \eqref{def:sym-sgp-2}, and \eqref{def:sym-sgp-3} hold.  It remains to prove that the functions $\xi_{X}$ 
	and $\kappa_X$ satisfy \eqref{def:sym-sgp-4} and \eqref{def:sym-sgp-5}. Note that $\kappa_X$ is symmetric and satisfies the 
	condition \eqref{eq:sym-sgp-datum-1} by definition. Below, we prove the conditions~\eqref{eq:sym-sgp-datum-3}, \eqref{eq:sym-sgp-datum-4}, \eqref{eq:sym-sgp-datum-5}, and \eqref{eq:sym-sgp-datum-2}.
	
	\subsection*{Proof of conditions~(\ref{eq:sym-sgp-datum-3}) and (\ref{eq:sym-sgp-datum-4})}\label{ss:proof-3-4}  
	
	\subsubsection*{Case 1: $\ia,\ib$ are elementary intervals} Note that, whenever $\ia\sgpp \ib$ is defined, 
	$\abfcf{\ia\sgpp \ib}{\ia}+\abfcf{\ia\sgpp \ib}{\ib}=1$
	and $\rbfcf{\ia\sgpp \ib}{\ia}=1=\rbfcf{\ia\sgpp \ib}{\ib}$.
	Therefore, $\xi_{X}$ satisfies \eqref{eq:sym-sgp-datum-3} and \eqref{eq:sym-sgp-datum-4}, \ie
	\begin{align}
		\xi_{X}(\ia\sgpp \ib,\ia)&=(-1)^{\abfcf{\ia\sgpp \ib}{\ia}}\rbfcf{\ia\sgpp \ib}{\ia}=
		-(-1)^{\abfcf{\ia\sgpp \ib}{\ib}}\rbfcf{\ia\sgpp \ib}{\ib}=-\xi_{X}(\ia\sgpp \ib,\ib)\ ,\\
		\xi_{X}(\ia,\ia\sgpp \ib)&=(-1)^{\abfcf{\ia}{\ia\sgpp \ib}}\rbfcf{\ia}{\ia\sgpp \ib}=
		-(-1)^{\abfcf{\ib}{\ia\sgpp \ib}}\rbfcf{\ib}{\ia\sgpp \ib}=-\xi_{X}(\ib,\ia\sgpp \ib)\ .
	\end{align}
	Note that, if either $\ia\sgpp \ib$, $\ia\sgpm \ib$ or $\ib\sgpm \ia$ is defined, 
	$\xi_{X}(\ia,\ib)=-\xi_{X}(\ib,\ia)$. 
	
	\subsubsection*{Case 2: $\ia,\ib$ are contractible intervals} If $\ia\sgpp \ib$ is contractible or undefined, the proof is essentially identical to the case of elementary intervals. If $\ia\sgpp \ib$ is non--contractible, then the conditions holds by a direct computation. 
	
	\subsubsection*{Case 3: $\ia$ is a contractible interval, $\ib$ is homeomorphic to $S^1$} By a direct computation, one sees that, if $\ia\sgpp \ib$ is defined, then both sides of  \eqref{eq:sym-sgp-datum-3} (resp.\ \eqref{eq:sym-sgp-datum-4})
	equals $-1$ (resp. $1$). 
	
	\subsubsection*{Case 4: $\ia$ is a contractible interval, $\ib\neq S^1$ is a non--contractible interval} First, by Lemma~\ref{lem:int-X}, $\ib$ is of the form $S^1\sgpp \bigoplus_{k} T_k$ for some pairwise disjoint contractible intervals $T_k$. Now, $\ia\sgpp \ib$ exists if and only if $S^1\to \ia$ or $T_h\to \ia$ for some $h$. In the first case, $\ia$ is perpendicular to all $T_k$, so the check of conditions~\eqref{eq:sym-sgp-datum-3} and \eqref{eq:sym-sgp-datum-4} reduces to the third case above. On the other hand, in the second case, $\ia$ is perpendicular to $\ib\sgpm T_h$, so the check of conditions~\eqref{eq:sym-sgp-datum-3} and \eqref{eq:sym-sgp-datum-4} reduces to the second case above.
	
	\subsubsection*{Case 5: $\ia,\ib$ are non--contractible intervals} Since $\ia\sgpp \ib$ is never defined, \eqref{eq:sym-sgp-datum-3} and  \eqref{eq:sym-sgp-datum-4} are automatically satisfied. 
	
	\subsection*{Proof of the condition~(\ref{eq:sym-sgp-datum-2})} 
	
	\subsubsection*{Case 1: $\ia,\ib$ are elementary intervals} If $\ia\sgpp \ib$  is defined, then 
	\begin{align}
		\xi_{X}(\ia\sgpp \ib, \ia)\xi_{X}(\ia,\ia\sgpp \ib)=-\xi_{X}(\ia\sgpp \ib, \ia)^2=-1=\kappa_{X}(\ia, \ib) \ .
	\end{align}
	If either $\ia\sgpm \ib$ or $\ib\sgpm \ia$ is defined, then
	\begin{align}
		-\xi_{X}(\ia, \ib)\xi_{X}(\ib,\ia)=\xi_{X}(\ia, \ib)^2=1=\kappa_{X}(\ia, \ib) \ .
	\end{align}
	Finally, if $\ia\sgpp \ib$, $\ia\sgpm \ib$ and $\ib\sgpm \ia$ are not defined, then
	$\kappa_{X}(\ia,\ib)=0$. 
	
	\subsubsection*{Case 2: $\ia,\ib$ are contractible intervals} Since we need to verify \eqref{eq:sym-sgp-datum-2} only for those $\ia, \ib$ such that $\ia\sgpp \ib$ is real (cf.\ condition $(\ast)$ in \eqref{eq:sym-sgp-datum-2}) and if $\ia\sgpm \ib$ or $\ib\sgpm \ia$ are defined, they are real, this case reduces to the previous one.
	
	\subsubsection*{Case 3: $\ia$ is a contractible interval, $\ib$ is homeomorphic to $S^1$} If $\ia\sgpp \ib$ is defined, in particular, it is not locally degenerate. Thus, by using similar arguments as in the third case of Section~\ref{ss:proof-3-4}, both sides of \eqref{eq:sym-sgp-datum-2} equals $-1$. Next, $\ia\sgpm \ib$ is never defined, while if $\ib\sgpm \ia$ is defined, this implies $\ia\subset \ib$ and therefore \eqref{eq:sym-sgp-datum-2} is trivial. 
	Finally, if neither $\ia\sgpp \ib$ or $\ib\sgpm \ia$ are defined, then $\ia\cap \ib=\emptyset$ and the result follows.
	
	\subsubsection*{Case 4: $\ia$ is a contractible interval, $\ib\neq S^1$ is a non--contractible interval} It follows by the same arguments as in the previous section.
	
	\subsection*{Proof of the condition~(\ref{eq:sym-sgp-datum-5})} 
	
	\subsubsection*{Case 1: $\ia,\ib, \ic$ are elementary intervals} First, $\ia\sgpm \ib$ is defined if and only if either $\ib\lsub \ia$ or $\ib\rsub \ia$. Note also that, if $\ia\sgpp \ic$ and $\ib\sgpp \ic$ are both defined, then 
	one of the following holds (we use the notation from Section~\ref{ss:semigroup-line}):
	\begin{itemize}
		\item $\ib\rsub \ia$, $\ic\to \ia$, $\ic\to \ib$;
		\item $\ib\lsub \ia$ $\ia\to \ic$, $\ib\to \ic$.
	\end{itemize}
	In both cases, we have $\ia\sgpp \ic= (\ia\sgpm \ib)\sgpp (\ib\sgpp \ic)$, hence
	\begin{align}
		\abfcf{\ia\sgpp \ic}{\ib\sgpp \ic}= &\abf{\cf{\ib\sgpp \ic}+\cf{(\ia\sgpm \ib)}}{\cf{\ib\sgpp \ic}}=\abfcf{\ib\sgpp \ic}{\ib\sgpp \ic} + \abf{\cf{(\ia\sgpm \ib)}}{\cf{\ib}+\cf{\ic}}\\
		=& \abfcf{\ic}{\ic} + \abf{\cf{(\ia\sgpm \ib)}}{\cf{\ib}}=\abfcf{\ia}{\ib}\ .
	\end{align}
	Here, we have applied Formula~\eqref{eq:abf-line} and Remark~\ref{rem:euler-form-identities-I} (since $\ic\perp \ia\sgpm \ib$). One can show similarly that $\abfcf{\ib\sgpp \ic}{\ia\sgpp \ic}=\abfcf{\ib}{\ia}$. Thus, condition \eqref{eq:sym-sgp-datum-5} holds.
	
	\subsubsection*{Case 2: $\ia,\ib, \ic$ are contractible intervals} Assume that there exists $\ia\sgpm \ib$. If $\ia\sgpp \ic$ and $\ib\sgpp \ic$ are contractible, we can reduce to the case of elementary intervals. It remains to be checked when
	\begin{enumerate}\itemsep0.2cm
		\item $\ib\sgpp \ic$ is non--contractible and it is not homeomorphic to $S^1$;
		\item $\ia\sgpp \ic$ is non--contractible and it is not homeomorphic to $S^1$, and $\ib\sgpp \ic$ is contractible but not contained in the locally degenerate part of $\ia\sgpp \ic$.
	\end{enumerate}
	In the first case, we can decompose $\ic$ as $\ia_1''\sgpp \ia_2''$ such that both $\ia\sgpp \ia_1''$ and $\ib\sgpp \ia_1''$ exist and are contractible and $\ia_2''$ is perpendicular to both $\ia$ and $\ib$. Thus, we can reduce the the situation described in the above paragraph. On the other hand, in the second case one needs to do a direct computation to show that \eqref{eq:sym-sgp-datum-5} holds.
	
	\subsubsection*{Case 3: $\ia, \ib$ are contractible elements, $\ic$ is homeomorphic to $S^1$} Assume that $\ia\sgpm \ib$, $\ia\sgpp \ic$ and $\ib\sgpp \ic$ are defined. In this case, one can explicitly verify that
	\begin{align}
		\xi_{X}(\ia\sgpp \ic, \ib\sgpp \ic)=(-1)^{-1}(-1)=1=\xi_{X}(\ia, \ib)\ .
	\end{align}
	
	\subsubsection*{Case 4: $\ia, \ib$ are contractible elements, $\ic\neq S^1$ is non--contractible} First, by Lemma~\ref{lem:int-X}, $\ic$ is of the form $S^1\sgpp \bigoplus_{k} T_k$ for some pairwise disjoint contractible intervals $T_k$. Now, $\ia\sgpm \ib$, $\ia\sgpp \ic$ and $\ib\sgpp \ic$ are defined. Thus, we have to consider only the following two situations: $S^1\to \ia, S^1\to \ib$ or $T_h\to \ia, T_h\to \ib$ for some $h$. In the first situation, we reduce to the third case above, while in the second one to the second case above.
	
	\subsubsection*{Case 5: $\ia$ is a contractible interval, $\ib$ is homeomorphic to $S^1$, $\ic$ is an interval} $\ia\sgpm \ib$ is never defined, while if $\ib\sgpm \ia$ is defined, this implies $\ia\subset \ib$. Thus, $\xi_X(\ib, \ia)=0$. Let $\ic$ be an interval such that both $\ia\sgpp \ic$ and $\ib\oplus \ic$ exist and are not homeomorphic to $S^1$. First note that $\ic$ can be only contractible since sums of non--contractible intervals never exist. In addition, $\ib\oplus \ic=(\ib\sgpm \ia)\sgpp (\ib\sgpp \ic)$ and $\ic\perp \ib\sgpm \ia$. Thus,
	\begin{align}
		\abfcf{\ib\sgpp \ic}{\ia\sgpp \ic}= &\abf{\cf{\ia\sgpp \ic}+\cf{\ib\sgpm \ia}}{\cf{\ia\sgpp \ic}}=\abfcf{\ib\sgpp \ic}{\ib\sgpp \ic} + \abf{\cf{\ib\sgpm \ia}}{\cf{\ia}+\cf{\ic}}\\[2pt]
		=& \abfcf{\ib}{\ib} + \abf{\cf{\ib\sgpm \ia}}{\cf{\ia}}=\abfcf{\ib}{\ia}=0\ .
	\end{align}
	Similarly, $\abfcf{\ia\sgpp \ic}{\ib\sgpp \ic}=0$. 
	
	\subsubsection*{Case 6: $\ia$ is a contractible interval, $\ib\neq S^1$ is a non--contractible interval, $\ic$ is an interval} $\ia\sgpm \ib$ is never defined, while $\ib\sgpm \ia$ is defined if and only if $\ia\subset \ib$. By Lemma~\ref{lem:int-X}, $\ib$ is of the form $S^1\sgpp \bigoplus_{k} T_k$ for some pairwise disjoint contractible intervals $T_k$.	Let $\ic$ be an interval such that $\ia\sgpp \ic$ and $\ib\sgpp \ic$ are defined. We have that $\ic$ is real, since sums of imaginary elements are never defined. We have two mutually exclusive cases:
	\begin{itemize}\itemsep0.2cm
		\item $\ic$ is perpendicular to $T_k$ for all $k$, so to verify that \eqref{eq:sym-sgp-datum-5} holds, one can suitably reduce to the fifth case above;
		\item there exists $T_h$ such that $T_h\to \ic$, so to verify that \eqref{eq:sym-sgp-datum-5} holds, one can suitably reduce to the second case above.
	\end{itemize}
	
	\subsubsection*{Case 7:  $\ia,\ib$ are non--contractible intervals, $\ic$ is an interval} Assume that $\ib\sgpm \ia$ is defined, hence it is necessarily real. As before, we decompose $\ib$ as $\ib=S\sgpp T$, with $S\in \isgp$ and $T\in \rsgp$, such that $\ia\sgpm \ib$ is perpendicular to $S$. Let $\ic$ be an interval such that $\ia\sgpp \ic$ and $\ib\sgpp \ic$ are defined. As seen before, $\ic$ is necessarily a real element. If $\ic$ sums the imaginary part $S$, then set $S'\coloneqq S\oplus \ic$. Therefore,
	\begin{align}
		\abfcf{\ia\sgpp \ic}{\ib\sgpp \ic}&=\abf{\cf{\ia\sgpm \ib}+\cf{T}+\cf{S'}}{\cf{T}+\cf{S'}}=\abfcf{\ia\sgpm \ib}{T}+\abfcf{T\sgpp S'}{T\sgpp S'}\\
		&= \abfcf{\ia\sgpm \ib}{T}=\abfcf{\ia}{\ib}\ .
	\end{align}
	Similarly, one can prove $\abfcf{\ib\sgpp \ic}{\ia\sgpp \ic}=\abfcf{\ib}{\ia}$. If $\ic$ sums the real part $T$, denote by $T'$ the sum between them. We have $T'\to \ia\sgpm \ib$. In this case, we get:
	\begin{align}
		\abfcf{\ia\sgpp \ic}{\ib\sgpp \ic}&=\abf{\cf{\ia\sgpm \ib}+\cf{T'}+\cf{S}}{\cf{T'}+\cf{S}}=\abfcf{\ia\sgpm \ib}{T'}+\abfcf{T'\sgpp S}{T'\sgpp S}\\
		&= \abfcf{\ia\sgpm \ib}{T'}=0=\abfcf{\ia}{\ib}\ .
	\end{align}
	Similarly, one can show $\abfcf{\ib\sgpp \ic}{\ia\sgpp \ic}=-1=\abfcf{\ib}{\ia}$.

	\bigskip
	
	\addtocontents{toc}{\protect\setcounter{tocdepth}{1}}

\end{document}